\def\fa{\forall}
\newtheorem{lemma}{Lemma}[section]
\newtheorem{algorithm}[lemma]{Algorithm}
\newtheorem{corollary}[lemma]{Corollary}
\newtheorem{theorem}[lemma]{Theorem}
\newtheorem{remark}[lemma]{Remark}
\newtheorem{assumption}[lemma]{Assumption}
\newtheorem{proposition}[lemma]{Proposition}
\def\RR{\rm \hbox{I\kern-.2em\hbox{R}}}
\def\NN{\rm \hbox{I\kern-.2em\hbox{N}}}
\def\ZZ{\rm {{\rm Z}\kern-.28em{\rm Z}}}
\def\CC{\rm \hbox{C\kern -.5em {\raise .32ex \hbox{$\scriptscriptstyle
|$}}\kern
-.22em{\raise .6ex \hbox{$\scriptscriptstyle |$}}\kern .4em}}
\def\<{\langle}
\def\>{\rangle}
\def\e{\varepsilon}
\def\de{\delta}
\def\pa{\partial}
\def\sig{\sigma}
\def\lang{\langle}
\def\rang{\rangle}
\def\grad{\nabla}
\def\lap{\bigtriangleup}
\def\Chi{\raise .3ex
\hbox{\large $\chi$}} 
\def\lsima{\hbox{\kern -.6em\raisebox{-1ex}{$~\stackrel{\textstyle<}{\sim}~$}}\kern -.4em}
\def\lsim{\hbox{\kern -.2em\raisebox{-1ex}{$~\stackrel{\textstyle<}{\sim}~$}}\kern -.2em}
\def\[{\Bigl [}
\def\]{\Bigr ]}
\def\({\Bigl (}
\def\){\Bigr )}
\def\[{\Bigl [}
\def\]{\Bigr ]}
\def\({\Bigl (}
\def\){\Bigr )}
\def\R{\mathbb{R}}
\def\T{{\relax\ifmmode I\!\!\hspace{-1pt}T\else$I\!\!\hspace{-1pt}T$\fi}}
\def\N{\mathbb{N}}
\def\la{\lambda}
\def\N{\mathbb{N}}
\def\lsim{\raisebox{-1ex}{$~\stackrel{\textstyle<}{\sim}~$}}
  \def\NN{N}                  
\def\la{\lambda}
\def\La{\Lambda}
\def\al{\alpha}
\def\om{\omega}
\def\argmin{\mathop{\rm argmin}}
\def\be{\beta}
\def\bz{{\bf z}}
\def\ep{\epsilon}
\def\bz{{\bf z}}
\def\argmin{\mathop{\rm argmin}}
\def\Om{\Omega}
\newcommand{\bea}{$$ \begin{array}{lll}}
\newcommand{\eea}{\end{array} $$}
\newcommand{\beqn}{\begin{equation}}
\newcommand{\eeqn}{\end{equation}}
\def\endproof{\hfill\rule{1.5mm}{1.5mm}\\[2mm]}
\newcommand\dist{\mathop{\rm dist}}
\newcommand\eref[1]{{\rm (\ref{#1})}}
\def\int{\intop\limits}
\newcommand\bU{{\bf U}}
\newcommand{\bom}{{\mbox{\boldmath$\omega$}}}
\date{\today}
\begin{document}

\title{Alleviating missing boundary conditions in elliptic partial differential equations using interior point measurements}

\author{Andrea Bonito}
\address{Department of Mathematics, Texas A\&M University, College Station,   Texas 77843}
\curraddr{}
\email{bonito@tamu.edu}
\thanks{The first author was supported in part by NSF grant DMS-2409807.}

\author{Alan Demlow}
\address{Department of Mathematics, Texas A\&M University, College Station,   Texas 77843}
\curraddr{}
\email{demlow@tamu.edu}
\thanks{The second author was supported in part by NSF grant DMS-2012326.}

\author{Joshua M. Siktar}
\address{Department of Mathematics, Texas A\&M University, College Station,   Texas 77843}
\curraddr{}
\email{jmsiktar@tamu.edu}
\thanks{}

\begin{abstract}
We consider an optimal recovery problem for the Poisson problem when the boundary data is unknown.  Compensating information is provided in the form of a finite number of measurements of the solution. A finite element algorithm for this problem was given in \cite{binev2024solving}, where measurements were assumed to be either bounded linear functionals of the solution or point measurements at locations lying anywhere in the closure of the computational domain.  In contrast, we focus on the case of point measurements at locations lying in the interior of the domain.  This lowers the regularity requirements placed on the solution.  Also, a key ingredient in the recovery process is the finite element approximation of Riesz representers associated with the measurements. Our main result is a pointwise error estimate for the Riesz representers.  We apply this to obtain improved estimates which measure the performance of the recovery algorithm in various norms.  
\end{abstract}

\subjclass[2020]{Primary 65N15, 65N20, 65N30 }

\maketitle


\section{Introduction}\label{Sec: Intro}


This paper focuses on the incomplete Poisson problem
\begin{equation}\label{Eq: Poisson}
        -\lap u \ = \ f \qquad \text{in} \ \ \Om,
\end{equation}
where $\Om \subset \mathbb R^n$, $n=2,3$ is a bounded domain with Lipschitz boundary, and $f\in H^{-1}(\Omega)$ is given.  
When for a given function $g \in H^{1/2}(\partial \Omega)$ the system \eqref{Eq: Poisson} is supplemented by the boundary conditions 
\begin{equation}\label{Eq: Poisson_bdy}
u=g \qquad \textrm{on}\ \ \Gamma:=\partial \Om, 
\end{equation}
the Lax-Milgram theory applies and guarantees a unique solution $u \in H^1(\Omega)$. We assume that we are given complete knowledge of $f$, but are lacking precise knowledge of the boundary data $g$ and thus are unable to uniquely determine $u$.  This setting is motivated by applications in which the inability to measure boundary data rises, including in fluid dynamics \cite{brunton2020machine, grinberg2008outflow, xu2021explore}; wind engineering \cite{richards2019appropriate, richards2011appropriate}, and inverse heat conduction \cite{beck1985inverse, galybin2010reconstruction}, among other areas.


Our goal is to find a ``best possible'' version of $u$ given certain information that further narrows possible choices. Following the framework from \cite{binev2024solving}, we first assume that $g$ belongs to a compact subset $K^B$ of $H^{1/2}(\Gamma)$. 
Given a constant $C_B>0$ and $s>\frac 1 2$, let
\begin{equation}\label{e:model_g}
K^B \ := \ \{ g \in H^s(\Gamma) \ | \ \| g \|_{H^s(\Gamma)} \leq C_B \}.
    \end{equation}
To alleviate the missing information on the boundary data, we further assume that $m$ pointwise measurements of $u$ inside $\Omega$ are provided. More precisely, we assume to know the locations $x_i\in \Omega$ and the values $\om_i=\lambda_i(u):=u(x_i)$, $i=1,...,m$. Note that this requires the interior continuity $u \in C^0(\Omega)$.

For $g \in K^B$, we let $u(g)$ be the solution to \eqref{Eq: Poisson}-\eqref{Eq: Poisson_bdy} and let 
$$
\mathcal K \ := \ \{ u(g) \ | \ g \in K^B \}
$$
be the (compact) set of $H^1(\Omega)$ consisting of all functions that satisfy the incomplete Poisson problem \eqref{Eq: Poisson} with boundary data in $K^B$.
It is sometimes referred to as the model class or the prior. 
All known information available to recover $u$ is encoded in 
\begin{equation}\label{Eq: Kw}
    \mathcal{K}_{\om} \ := \ \{u \in \mathcal K \ | \  \la_i(u) = \om_i, \ i=1,...,m\}.
\end{equation}
Notice that any function in $\mathcal{K}_{\om}$ could be the function from which the measurements were taken. In particular, we cannot expect a recovery algorithm to distinguish between them. Instead, we say that $u^* \in X$ is an \textbf{optimal recovery} in a Banach space $X$ with semi-norm $|.|$ for functions in $\mathcal K \subset X$, if 
\begin{equation}\label{e:opt_intro}
\sup_{v\in \mathcal K_\om} | v - u^* |_X \leq \sup_{v\in \mathcal K_\om} | v- u |_X, \qquad \forall u \in X.
\end{equation}
In this work, we focus on three cases, $X=H^1(\Omega)$ with $|.|_X=\|.\|_{H^1(\Omega)}$, $X=L_\infty(\Omega)$ with $\|.\|_{L_\infty(\Omega)}$, and $X=\{ v \in L_2(\Omega) \ | \ v|_{\Omega_d} \in L_\infty(\Omega_d) \} $ with $|.|_{X}=\|.\|_{L_\infty(\Omega_d)}$ and where $\Omega_d$ is a strict subset of $\Omega$ containing the location of the measurements.

The theoretical formulation of optimal recovery problems, and more broadly the topic of optimal learning, have been studied extensively.
The center of a ball $B(\mathcal{K}_{\om})_X$ in $X$ of smallest radius that contains the set $\mathcal{K}_{\om}$ is an optimal recovery in $X$ and is called a Chebyshev center. The corresponding optimal recovery error is the radius $R(\mathcal{K}_{\om})_X$ of that ball, referred to as the Chebyshev radius. We refer to the classical texts \cite{bojanov1994optimal, micchelli1985lectures, micchelli1976optimal, traub1973theory} for more details and to \cite{binev2024optimal} for a discussion on their approximations. See also \cite{binev2024optimal, binev2017data, maday2015parameterized} for optimal least squares-based approximation schemes.

We remark that the optimal recovery problems considered here are related to optimal control problems where the objective function enforces the value of the state at $x_i=1,...,m$ and the control variable is the Dirichlet boundary values.  In particular, one may minimize $\sum_{j=1}^m |u(x_i)-\omega_i|^2 + \frac{\alpha}{2} \|g\|_{V(\Gamma)}^2$ subject to $-\Delta u =f$ in $\Omega$ and $u=g$ on $\Gamma$.  Here either $V(\Gamma)=L_2(\Gamma)$ or $V(\Gamma)=H^s(\Gamma)$. We are not aware of direct study of this problem in the literature; cf. \cite{AOS18, BDE16, LMV13,vexler2025numerical} for study of other control problems with pointwise measurements in the objective functional. However, optimality in classical optimal control differs from how the term is used in this work. Here, the aim is to study an algorithm designed to approximate the ``best'' possible function $u^*$, i.e., satisfying the optimality condition \eqref{e:opt_intro}. Notice that a function $u^*$ satisfying \eqref{e:opt_intro} up to an absolute multiplicative constant $C>1$ is said to be near-optimal. For instance, any $u \in \mathcal K_\omega$ is near-optimal with a constant at most $2$. This weaker notion of optimality is analyzed in \cite{binev2024optimal}. It was shown that for the optimal control alternative to produce a near-optimal approximation, several conditions must be met, including penalization of the boundary control in $H^s(\Gamma)$ (that is, $V=H^s(\Gamma)$) and not in $L^2(\Gamma)$ as is most frequently done in optimal control.

In the specific case of recovering the solution to the PDE \eqref{Eq: Poisson}, it is convenient to decompose $u=u_0+u_{\mathcal H}$ where $u_0 \in H^1_0(\Omega) \cap C^0(\Omega)$ is the (completely determined) solution of \eqref{Eq: Poisson} supplemented with the vanishing boundary condition $u_{0}=0$ on $\Gamma$ and $u_{\mathcal H}$ is a harmonic function that satisfies the measurements 
\begin{equation}\label{e:measurement_u0}
u_{\mathcal H}(x_i) \ = \ \omega_i - u_0(x_i) \ =: \ \widetilde \omega_i.
\end{equation}
Furthermore, among all harmonic functions satisfying these measurements, the one with minimal norm in $H^s(\Gamma)$ is an optimal recovery for the harmonic part of the solution \cite{binev2024solving}.

The minimal norm property is exploited in \cite{batlle2025error} to design a kernel method algorithm for the approximation of an optimal recovery of general, possibly nonlinear, partial differential equations (PDEs). 
Instead, in \cite{binev2024solving}, the authors focus on the PDE \eqref{Eq: Poisson} and take advantage of the representation, 
\begin{equation}\label{Eq: uHRep}
u_{\mathcal{H}}^* \ = \ \sum_{i=1}^m U_i \phi_i,
\end{equation}
where the coefficients $U_i \in \mathbb R$, $i=1.,..,m$, are determined by the constraints $u_\mathcal{H}^*(x_i)=\tilde{\omega}_i$; these coefficients are determined independently of $X$ since \eqref{Eq: uHRep} is equivalent to finding the function of minimal fractional norm on the boundary amongst functions satisfying the prescribed point measurements (see Subsection \ref{Subsec: MinNormInterp}), which will be crucial for conducting simultaneous error analyses in different norms. For $i=1,...,m$, the function $\phi_i \in \mathcal H^s(\Omega)$ is the Riesz representer of the measurement $\lambda_i$ in the sense that
\begin{equation}\label{Eq: RieszOverview}
    \lang \phi_i, v\rang_{\mathcal{H}^s(\Om)} \ = \ \la_i(v), \ v \in \mathcal{H}^s(\Om),
\end{equation}
where $\mathcal H^s(\Omega)$ is the Hilbert space of harmonic functions with $H^s(\Gamma)$ traces (equipped with the $H^s(\Gamma)$ scalar product).  Consequently, the algorithm proposed in \cite{binev2024solving} consists of approximating $u_0$, $\{\phi_i\}_{i=1}^m$, and $\{U_i\}_{i=1}^m$ using a finite element method.  

We consider variations of two main assumptions made in the predecessor work \cite{binev2024solving}:  The manner in which the recovery error is measured and the location of the measurement points $\{x_i\}$.  First, in \cite{binev2024solving} the recovery error was measured in $X=H^1(\Omega)$.  We additionally consider recovery in $X=L_\infty(\Omega)$, which as a non-Hilbertian Banach space requires a more general theoretical framework.  More precisely, when $X$ is not a uniformly convex Banach space such as when $X=L_\infty(\Omega)$, the Chebyshev ball $B(\mathcal K_{\om})_X$ may not be unique and there thus may exist multiple optimal recovery functions \cite{CDS}.  Finally, we also consider recovery of $u$ in $L_\infty(\Omega_d)$, where $\Omega_d:=\{x \in \Omega:{\rm dist}(x, \partial \Omega)>d\}$ lies on the interior of $\Omega$ a uniform distance $d$ away from $\Gamma$. Note that in all cases the algorithm we obtain is \textbf{near-optimal}, that is, an optimal recovery can be established for any measurement data with the error bounded by a constant only depending on $R(\mathcal{K}_{\om})$. A precise definition is given in Subsection \ref{Sec: Algorithms}. 

We also place an important restriction on the placement of the measurement points $\{x_i\}$.  Whereas in \cite{binev2024solving} the measurement points are allowed to lie anywhere in $\overline{\Omega}$, we require that they lie in $\Omega_d$; cf. \cite{galybin2010reconstruction} for an example in which sensors need to be placed inside of $\Omega$.  To understand the consequences of this difference, we note that the Riesz representers $\{\phi_i\}$ can be calculated via a two-step process.  Let $E:H^{1/2}(\Gamma) \rightarrow H^1(\Omega)$ be the harmonic extension operator.  First one seeks $\psi_i \in H^s(\Gamma)$ satisfying
\begin{equation}
    \langle \psi_i, v\rangle = \lambda_i(v)=Ev(x_i), ~~v \in H^s(\Gamma).
\end{equation}
Then $\phi_i = E \psi_i$.  

If $x_i \in \Gamma$, then $\psi_i$ is a Green's function for an $H^s$-elliptic problem.  It is necessary to assume that $s>\frac{n-1}{2}$ in order to guarantee that the functional $\lambda_i=Ev(x_i) $ lies in $H^{-s}(\Gamma)$, and even if this assumption holds the smoothness of $\lambda_i$ and thus of $\psi_i$ and $\phi_i$ is highly limited regardless of the smoothness of $\Gamma$.  Let $\{U_{i,h}\}_{i=1}^m$ be the approximations to the coefficients $\{U_i\}$ obtained from the finite element algorithm.  In assessing the discretization error in the recovery process, it is necessary to control $\max_{i=1}^M |U_i-U_{i,h}|$ and thus $\max_{1 \le i,j \le m} |(\phi_{i}-\phi_{i,h})(x_j)|$ independent of the choice of $X$.  In \cite{binev2024solving} it is shown that when sufficient regularity is present and the coefficients $U_i$ can be stably determined, there holds for any $\epsilon>0$
\begin{equation} \label{eq: global_order}
|U_i-U_{i,h}| \le C h^{s-\frac{n-1}{2}-\epsilon},
\end{equation}
with the parameters $\frac{1}{2}<s \le 1$ excluded when $n=3$ as previously noted.  

In contrast, when $x_i$ lies in the interior of $\Omega$ the smoothness of the functional $\lambda_i$ is limited only by the smoothness of $\Gamma$.  We thus may allow $\frac{1}{2}<s<\frac{3}{2}$ independent of space dimension.  On the other hand, the smoothness of $\lambda_i$ degenerates as $x_i \rightarrow \Gamma$ and $\psi_i$ thus approaches a boundary Green's function.  In summary, we often obtain higher smoothness and thus a higher order of convergence in our finite element algorithm when $x_i \in \Omega$, but with the caveat that error estimates show deterioration as the measurement points approach $\Gamma$.  More precisely, when $\Gamma$ is sufficiently smooth and $x_i \in \Omega_d=\{x \in \Omega: {\rm dist}(x,\Gamma)>d\}$, $i=1,...,m$, and $U_{i,h}$ is the set of coefficients obtained from the finite element algorithm, we show that when $d \ge c_0 h$ with $c_0$ sufficiently large
$$|U_i-U_{i,h}| \le C d^{-n} (h^{2-\epsilon} + h^{4-2s}).$$
The rates of convergence with respect to $h$ thus obtained are higher than those from \cite{binev2024solving} quoted above in \eqref{eq: global_order} assuming $x_i \in \overline{\Omega}$ for any $\frac{1}{2}<s< \frac{3}{2}$.  On the other hand, the estimates indicate that it is necessary for $h$ to sufficiently resolve $d$ in order for these higher convergence rates to offset the negative powers of $d$ in the estimates.  For example, in the case $n=2$ and $s=1$ we obtain $d^{-n} h^{2-\epsilon}$ whereas \eqref{eq: global_order} yields $h^{1/2-\epsilon}$.  Our numerical experiments confirm that better convergence rates are indeed obtained when the measurement points lie in $\Omega_d$ instead of $\overline{\Omega}$.

We now outline the contents of the remainder of this paper. Section \ref{Subsec: Prelim} includes needed notation for the paper and preliminary results. Next, Section \ref{s:optimal} details the algorithm used for computing an optimal recovery solution through the Riesz representers. Section \ref{s:FEM} contains details of finite element methods used to implement this algorithm, while Section \ref{Subsec: regularityAssump} outlines regularity assumptions and results needed in our analysis.  The core part of the paper is Section \ref{Sec: Mainresult}, where we apply the results of the previous sections to the finite element setting by proving refined pointwise and $H^1$ estimates on the Riesz representers and using these to obtain convergence estimates for the optimal recovery. To conclude, Section \ref{Sec: NumericalResults} includes numerical experiments.

\section{Notations and Preliminaries}\label{Subsec: Prelim}

Let $\Om \subset \R^n$ be a bounded Lipschitz domain. In order to simplify the discussion and avoid accounting for the approximation of $\Omega$, we assume that $\Omega$ is polygonal when $n = 2$, and polyhedral when $n = 3$. We postpone to Section~\ref{Subsec: HigherOrder} a discussion on the extension of the results presented below to smooth domains.



\subsection{Function spaces}\label{Subsec: FcnSpaces}
We now briefly introduce standard functional spaces and refer to  \cite{adams2003sobolev, demengel2012functional, di2012hitchhikers, leoni2023first} for  more details.
For an open set $\mathcal O \subset \Omega$ or $\mathcal O \subset \Gamma$, the space of continuous functions on $\mathcal O$ is denoted $C^0(\mathcal O)$. Among those, we let $C^0(\overline{\mathcal O})$ be the space of functions that can be continuously extended to the closure of $\mathcal O$. We use the standard notation $L_p(\mathcal O)$, $1 \le p \leq \infty$ to denote the standard Lebesgue spaces equipped with the standard norms $\|.\|_{L_p(\mathcal O)}$.
The $L_2(\mathcal O)$ scalar product is denoted $(.,.)_{L^2(\mathcal O)}$, and we write $H^\sigma(\mathcal O)$, $0<\sigma<\infty$ to denote the Sobolev spaces. For a positive integer $k$, we set
\begin{equation}\label{Eq: Hk}
    \|v\|^2_{H^k(\mathcal O)} \ := \ \sum_{|\al| \leq k}\|\pa^{\al}v\|^2_{L_2(\mathcal O)},
\end{equation}
where $\partial^\alpha$ are tangential derivatives when $\mathcal O \subset \Gamma$.
For a non-integral $\sigma>0$, we define
\begin{equation}\label{Eq: Hr}
    \|v\|^2_{H^{\sigma}(\mathcal O)} \ := \ \|v\|^2_{H^k(\mathcal O)} + \sum_{|\al| = k}\iint_{\mathcal O \times \mathcal O}\frac{|\pa^{\al}v(x) - \pa^{\al}v(y)|^2}{|x - y|^{p + 2(\sigma - k)}}dxdy,
\end{equation}
where $k := \lfloor{\sig\rfloor}$, and $p=n$ when $\mathcal O\subset \Omega$ and $p=n-1$ when $\mathcal O \subset \Gamma$.
When $\mathcal O$ is a subset of $\Gamma$, some restrictions on $\alpha$ occur depending on the smoothness of $\Gamma$. 
We comment on these in the next subsection.
The corresponding scalar products are denoted $\langle.,.\rangle_{H^\sigma(\mathcal O)}$.
We shall also need $H^{-1}(\Om)$ to denote the dual space of $H^1(\Om)$. 
Finally, we let $H^1_0(\Om)$ denote the functions in $H^1(\Om)$ with zero trace on $\Gamma$.


\subsection{Trace and extension operators}\label{Subsec: Trace}
The trace operator $T$ is defined for $w \in C(\overline{\Om})$ as the restriction of $w$ to the boundary $\Gamma$, and then subsequently extended to Sobolev functions by density. Thus if $v \in H^1(\Om)$ we can denote
\begin{equation}\label{Eq: TraceDef}
    v_{\Gamma} \ := \ T(v) \ = \ v|_{\Gamma}.
\end{equation}
When no confusion is possible, we shall simply write $v$ instead of $Tv$ or $v|_\Gamma$. This motivates the trace definition of the Fractional Sobolev spaces on the boundary, $H_{T}^\sigma(\Gamma)$: for any $\sigma \geq \frac{1}{2}$, we define 
\begin{equation}\label{Eq: THs}
    H_T^\sigma(\Gamma) \ := \ T(H^{\sigma + 1/2}(\Om)),
\end{equation}
with the underlying trace norm
\begin{equation}\label{Eq: HsNorm}
    \|g\|_{H_T^\sigma(\Gamma)} \ := \ \min\Big\{\|v\|_{H^{\sigma + 1/2}(\Om)} \ | \ v_{\Gamma} \ = \ g\Big\}.
\end{equation}
  When $\Om$ has a smooth boundary and $\sigma \geq \frac{1}{2}$, it is well-known that the intrinsic definition \eqref{Eq: Hr} and the trace definition \eqref{Eq: THs}, \eqref{Eq: HsNorm} of boundary Sobolev spaces are equivalent, i.e. the spaces coincide and the norms are equivalent \cite{leoni2023first}.  The interplay between the two definitions is more subtle in the case of nonsmooth domains.  In \cite{binev2024solving} it is stated without rigorous proof that the two definitions are equivalent on polyhedral domains when $\frac{1}{2}<\sigma<\frac{3}{2}$. This equivalence is not needed in the analysis presented in the following, which frees us from such a restriction on $s$. However, we need a related but weaker result that generally requires $\sigma<\frac{3}{2}$ on polyhedral domains (the focus of this paper; see Section~\ref{Subsec: HigherOrder} for extensions). But because fractional spaces $H^\sigma(\Gamma)$ are degenerate on polyhedral surfaces when $\sigma \ge \frac{3}{2}$, such an assumption is not restrictive. 

We also define for $\frac{1}{2} < \sigma < \frac{3}{2}$ the harmonic function space
\begin{equation}\label{eq: HsDef}
    \mathcal{H}^\sigma(\Om) \ := \ \{v: \Om \rightarrow \R \ | \ \lap v = 0, \ v_{\Gamma} \in H^\sigma(\Gamma)\},
\end{equation}
with norm
\begin{equation}\label{Eq: HSTrace}
    \|v\|_{\mathcal{H}^\sigma(\Om)} \ := \ \|v\|_{H^\sigma(\Gamma)}.
\end{equation}
It is clear that $(\mathcal{H}^\sigma(\Om), \|\cdot\|_{\mathcal{H}^\sigma(\Om)})$ is a Hilbert space; more information on this space is presented in \cite{auchmuty2009reproducing}. Lax Milgram theory, see for instance \cite{Bre, yosida2012functional}, guarantees the existence of constants $c_L, C_L > 0$ depending on $\Om$ such that
\begin{equation}\label{Eq: LMbound}
    c_L\|v\|_{H^1(\Om)} \ \leq \ \|\Delta v\|_{H^{-1}(\Om)} + \|v_{\Gamma}\|_{H^{1/2}(\Gamma)} \ \leq \ C_L\|v\|_{H^1(\Om)}, \ \qquad  v \in H^1(\Om).
\end{equation}
As a consequence, there exists another constant $C_\sigma > 0$ so that
\begin{equation}\label{Eq: CsStability}
    \|v\|_{H^1(\Om)} \ \leq \ C_\sigma\|v\|_{\mathcal{H}^\sigma(\Om)}, \qquad  {v \in \mathcal{H}^\sigma(\Om)}.
\end{equation}

%

We recall that we can decompose any function $u\in \mathcal K$ as 
\begin{equation}\label{eq:decomp}
u=u_0+u_{\mathcal H},
\end{equation} 
where $u_0 \in H^1_0(\Omega) \cap C^0(\Omega)$ is given by \eqref{Eq: Poisson} for $f\in H^{{-1+\beta_\Omega}}(\Omega)$, $\beta_\Omega > \max(0, \frac{n}{2} - 1)$ 
 while $u_{\mathcal H}$ is harmonic in $\Omega$ and satisfies the boundary condition \eqref{Eq: Poisson_bdy} for some $g \in H^{\frac 1 2}(\Gamma)$.
 
 This motivates the introduction of the harmonic extension operator. For any $g \in H^{\frac{1}{2}}(\Gamma)$, we define our harmonic extension operator $E: H^\frac{1}{2}(\Gamma) \rightarrow H^1(\Om)$ via
\begin{equation}\label{Eq: defE}
Eg \ :=\ {\rm argmin}\{\|\nabla v\|_{L_2(\Omega)} \ | \  v_\Gamma=g\}.
\end{equation}
The function $Eg$ is characterized by $T(Eg)=g$ and
\begin{equation}\label{Eq: EgChar}
\int_\Omega \nabla Eg \cdot \nabla v \ = \ 0, \quad v\in H^1_0(\Omega).
\end{equation}
From the left inequality in \eqref{Eq: LMbound}, one has
\begin{equation}\label{Eq: EgStable}
\|Eg\|_{H^1(\Omega)} \ \leq \ c_L^{-1}\|g\|_{H^{1/2}(\Gamma)}, \quad g\in H^{1/2}(\Gamma).
\end{equation}
With $E$ thus defined, we can alternatively write $\mathcal{H}^{\sigma}(\Om) \ = \ \{Ev \ | \ \ v|_{\Gamma} \in H^{\sigma}(\Gamma)\}$.  We postpone to Section~\ref{Subsec: regularityAssump} discussions on the mapping properties of the extension operator.

Finally, we recall that the unknown boundary data function $g$ is assumed to belong to the compact set $K^B$ in \eqref{e:model_g} for some $s>\frac 1 2$.
In view of the discussion above, we restrict the parameter $s$ to $\frac 1 2 <s < \frac 3 2$ and define 
$$
\mathcal K^{\mathcal H}:= \{ v \in \mathcal H^s (\Omega) \ | \ \| v \|_{\mathcal H^s(\Omega)} \leq C_B \} = \{ Eg \ : \ g \in K^B\}
$$
corresponding to the functional space containing all the possible components $u_{\mathcal H}$ in the decomposition \eqref{eq:decomp}.




\subsection{Measurements}

We assume that the locations $x_i$ of the measurements $\lambda_i$ are strictly inside $\Omega$.  In particular, $x_i \in \Omega_d$ ($1 \le i \le m$), where for some parameter $d$, $\Omega_d= \{ x \in \Omega: {\rm dist}(x,\partial \Omega)>d\}$. Whence, there is a constant $\Lambda_0>0$ depending on $\beta_\Omega$ and $d$ such that the following estimate holds:
\begin{equation}\label{e:Lambda0}
\max_{i=1,\dots,m} |\lambda_i(u_0)| \ = \ \max_{i=1,...,m}|u_0(x_i)| \ \leq \ \| u_0\|_{L_\infty(\Omega_d)} \ \leq \ \Lambda_0 \| f \|_{H^{-1+\beta_\Omega}(\Omega)}.
\end{equation}
Similarly, there is a constant $\Lambda_{s,d}>0$ such that for all $v\in \mathcal K^{\mathcal H}$, we have
\begin{equation}\label{e:Lambdas}
\max_{i=1,\dots,m}|\lambda_i(v)| \ \leq \ \max_{i = 1, \dots, m}|v(x_i)| \ \leq \  \La_{s, d} \| v \|_{\mathcal H^s(\Omega)} \ \leq \ \Lambda_{s,d} C_B.
\end{equation}
To establish this bound, note that $v(x_i)= \frac{1}{B_d(x_i)} \int_{B_d(x_i)} v \lesssim d^{-n/2} \|v\|_{L_2(\Omega)}$, where $B_d(x_i)$ is the open ball of radius $d$ centered at $x_i$.  Here and below we write $a \lesssim b$ in order to indicate that $a \le Cb$ with $C$ depending possibly on $s$, $\Omega$, $\Gamma$, and the shape regularity and quasi-uniformity properties of the finite element mesh defined below, but not on other essential quantities.  This combined with boundedness of the harmonic extension operator $E$ (see \eqref{eq: EL2stable} below) yields the desired result.  Alternatively, if $s>\frac{n-1}{2}$ one can combine the maximum principle with a Sobolev embedding $H^s(\Gamma) \subset C(\Gamma)$ in order to obtain the desired result with constant independent of $d$.  Such an assumption on $s$, however, unnecessarily restricts the allowed values of $s$ to $1<s<\frac{3}{2}$ when $n=3$.


It will be convenient to use the following weighted $\ell^2$ norm on $\R^m$ for the vector of the measurements values, i.e.,
 \begin{equation}\label{Eq: weightl2Norm}
     \|\bz \| \ := \ \left(\frac{1}{m}\sum^{m}_{i = 1}|z_i|^2\right)^{\frac{1}{2}} \ = \ m^{-1/2}\|\bz\|_{\ell_2}, \ \bz = (z_1, \dots, z_m) \in \R^m.
\end{equation}
With this notation, for any measurements $\bom:= (\omega_i)_{i=1}^m$ and $u \in \mathcal K_\om$, we have using the decomposition $u = u_0 + u_{\mathcal{H}}$ that
\begin{equation}\label{e:estim_meas}
\| \bom \| \ = \ \left(\frac{1}{m}\sum^{m}_{i = 1}|u(x_i)|^2\right)^{\frac{1}{2}} \ \leq \ \Lambda_0 \|f \|_{H^{-1+\beta_\Omega}(\Omega)} + \Lambda_{s,d} C_B.
\end{equation}
We shall also need an estimate for the modified measurement vector $\widetilde \bom:= (\widetilde \om_i)_{i=1}^m$ defined in \eqref{e:measurement_u0}, which reads
\begin{equation}\label{e:estim_mod_meas}
\| \widetilde \bom \|  \ = \ \|\la(u_{\mathcal{H}})\| \ \leq \ \La_{s, d}C_B, 
\end{equation}
owing to \eqref{e:Lambdas}. Notice the upper bound is independent of $\bom$.

\section{Optimal recovery}\label{s:optimal}

Our task is to recover a function $u \in \mathcal K_\omega$ (see \eqref{Eq: Kw}) using the information provided by the measurements $\lambda_i(u):= u(x_i)=\omega_i$, $i=1,..,m$, to alleviate the lack of information on the boundary data. 
Since $g$ is the only unknown data in \eqref{Eq: Poisson}-\eqref{Eq: Poisson_bdy}, the recovery process consists mainly in determining the best possible harmonic component $u_{\mathcal H} \in \mathcal K^{\mathcal H}$ in \eqref{eq:decomp} that satisfies the measurements \eqref{e:measurement_u0}.
We set $\widetilde \bom:= \bom - (\lambda_i(u_0))_{i=1}^m$ and define the functional set gathering all the information a-priori known on the harmonic component
$$
\mathcal K^{\mathcal H}_{\widetilde \omega}:= \{ v \in \mathcal K^{\mathcal H} \ | \ \lambda_i(v) = \widetilde \omega_i, \ i=1,...,m\};
$$
compare with \eqref{Eq: Kw}.
Note that $\mathcal K^{\mathcal H} \subset C^0(\overline{\Omega_d})$, thereby justifying the use of pointwise measurements located in $\Omega_d$.


In this section, we present near-optimal abstract algorithms based on the approximation of the minimal norm interpolant. A key ingredient is the approximation of the Riesz representers. In Section~\ref{s:FEM}, we present a practical finite element algorithm to achieve the desired approximation properties of the Riesz representers.


\subsection{Optimal Recovery in Banach spaces}\label{Subsec: BanachSpaceSetting}

We first define the notions of Chebychev radius and Chebychev center.
For a Banach space $X$ and $Z \subset X$, the \emph{Chebychev radius} of $X$ with respect to $Z$ is defined as
    \begin{equation}\label{Eq: ChebyRadDef}
        R(Z)_X \ := \ \inf_{v \in X}\sup_{z \in Z}|v - z|_X,
    \end{equation}
    where $|.|_X$ denotes a semi-norm in $X$.
    Furthermore, a function $v^* \in X$ is said to be a \emph{Chebychev center} of $X$ with respect to $Z$ if
    \begin{equation}\label{Eq: ChebyCenterDef}
        v^* \in \argmin_{v \in X}\sup_{z \in Z}|v - z|_X.
    \end{equation}
It is known that if $X$ is uniformly convex and $|.|_X$ is a norm, then the Chebychev center is unique \cite[Lemma 2.3]{DPW}.

The Chebychev radius is the best performance an algorithm can achieve with the partial information encoded in $K_\omega$. Moreover, we say that $v^* \in X$ is the optimal recovery in $(X,|.|_X)$ of functions in $Z$ if
$$
\sup_{z \in Z} | v^*-z |_X = R(Z)_X.
$$

In our setting $Z=\mathcal K_{\omega}$; in view of the decomposition \eqref{eq:decomp},  $u^* = u_0 + u^*_{\mathcal H}$ is an optimal recovery in $(X,|.|_X)$ of functions in $\mathcal K_{\omega}$ if and only if $u^*_{\mathcal H}$ is an optimal recovery in $X$ of functions in $\mathcal K^{\mathcal H}_{\widetilde \omega}$, provided $u_0 \in X$. Therefore, we require the following assumptions.

\begin{assumption}[on $(X,|.|_X)$]\label{ass:X}
Assume that $\mathcal H^s(\Omega) \subset X$ for the topology generated by the semi-norm $|.|_X$, and there is a constant $\Lambda_X$ such that
\begin{equation}\label{e:embbed_X}
| v |_{X} \ \leq \ \Lambda_X \| v \|_{\mathcal H^s(\Omega)}, \qquad \forall v \in \mathcal H^s(\Omega).
\end{equation}
\end{assumption}

In this work, we focus on three settings:
\begin{itemize}
    \item Case 1: $X=H^1(\Omega)$ and $|.|_X=\|.\|_{H^1(\Omega)}$;
    \item Case 2: $X=L_\infty(\Omega)$ with $|.|_X=\|.\|_{L_\infty(\Omega)}$;
    \item Case 3: $X=\{ v \in L_2(\Omega) \ | \  v|_{\Omega_d} \in L_\infty(\Omega_d)\}$ and $|.|_X=|.|_{L_\infty(\Omega_d)}$. 
\end{itemize}
In Case 2, Assumption~\ref{ass:X} is satisfied provided $s>\frac{n-1}{2}$.  In contrast, in Cases 1 and 3, Assumption~\ref{ass:X}  is satisfied without additional restriction besides  $s>1/2$.

Regarding the function $u_0 \in H^1_0(\Omega)$ defined by $-\Delta u_0 = f$ in $\Omega$, we shall require that
$$u_0 \in X \cap C^0(\Omega).$$
We assume throughout that $f \in H^{-1+\beta_\Omega}(\Omega)$ for $\beta_\Omega> \max(0,n/2-1)$, which automatically gives $u_0 \in C^0(\Omega) \cap H^1(\Omega) \subset X$ when $X=H^1(\Omega)$ or $X=\{ v \in L_2(\Omega) \ | \ v|_{L_\infty(\Omega)}\}$. Instead, we further need that $f$ belongs to $L_b(\Omega)$ for $b>n/2$ and that $\Omega$ satisfies the exterior cone property  when $X=L_\infty(\Omega)$ \cite{gilbarg1977elliptic}.

\subsection{Minimal Norm Interpolant}\label{Subsec: MinNormInterp}

We start by recalling that from the decomposition \eqref{eq:decomp}, we conclude that $u_\mathcal H \in \mathcal K^{\mathcal H}_{\widetilde \omega}$ is the component of $u$ to be recovered. To this aim, we define the \emph{minimal norm interpolant} $u^*_{\mathcal H}:=u^*_{\mathcal H}(\widetilde \bom)$ as
\begin{equation}\label{Eq: MinNorm}
    u_{\mathcal H}^* \ := \ \argmin\{\|v\|_{\mathcal{H}^s(\Om)} \ | \ v \in \mathcal{H}^s(\Om), \ \la_i(v) = \widetilde \omega_i, \ i =1,...,m\}.
    \end{equation}
    As in \eqref{Eq: uHRep}, the quantity \eqref{Eq: MinNorm} is independent of $X$.
    Notice that the harmonic part of the solution to recover and from which the measurements are obtained belongs to the set $\{ v \in \mathcal{H}^s(\Om) \ | \ \la_i(v) = \widetilde \omega_i, \ i=1,..,m\}$ (which is therefore not empty). This, together with the strong convexity of the $\mathcal{H}^s(\Om)$-norm guarantees the existence and uniqueness of the minimal norm interpolant. 

For any Banach space $X$ and semi-norm $|.|_X$ satisfying Assumption~\ref{ass:X}, the minimal norm interpolant is an optimal recovery in $(X,|.|_X)$ for functions in $\mathcal K^{\mathcal H}_{\widetilde \omega}$. Indeed, the arguments in Section 1.4 of \cite{binev2024solving} solely based on the central symmetry of $\mathcal K^{\mathcal H}_{\tilde \omega}$ around $u^*_{\mathcal H}$ and the triangle inequality property satisfied by the (semi-)norm in $X$ carry through general Banach spaces $X$, even when the Chebysvhev center is not unique. In particular, $u^*_{\mathcal H} \in \mathcal K^{\mathcal H}_{\omega} \subset X$ and we have
\begin{equation}\label{Eq: GenericBEstimateOverview1}
    \sup_{w\in \mathcal K^{\mathcal H}_{\widetilde \omega}}|w - u^*_{\mathcal H}|_{X}= R(\mathcal K^{\mathcal H}_{\widetilde w})_X.
\end{equation}
This implies that under Assumption~\ref{ass:X} and for $u_0\in X$, $u^*=u_0 + u^*_{\mathcal H}$ is an optimal recovery in $(X,|.|_X)$  for functions in $\mathcal K^{\mathcal H}_{\widetilde \omega}$.

The minimal norm interpolant has an algorithm-friendly characterization using the Riesz representers of the measurement functionals $\phi_i$, $i=1,...,m$, defined by \eqref{Eq: RieszOverview}. Let $\mathbb W:= \textrm{span}(\phi_1,...,\phi_m)$ and note that the measurements $\lambda_i(v)=\tilde \omega_i:= \omega_i - \lambda_i(u_0)$, $i=1,...,m$ determine the component of $v \in \mathcal H^s(\Omega)$ in $\mathbb W$. Furthermore, the minimal norm interpolant $u_{\mathcal H}^*$ belongs to $\mathbb W$ because any nonzero component in the orthogonal complement would increase the $\mathcal H^s(\Omega)$-norm.
Whence, we find that 
\begin{equation}\label{e:u_star_span}
u_{\mathcal H}^* \ = \ \sum_{i=1}^m U_i \phi_i,
\end{equation}
for some $U_i \in \mathbb R$ that satisfies the relation
\begin{equation}\label{e:obs_system}
G \bU \ = \ \widetilde{\bom}.
\end{equation}
Here $\bU = (U_i)^m_{i = 1} \in \mathbb R^m$, $\widetilde \bom = (\omega_i)^m_{i = 1} \in \mathbb R^m$, and $G\in \mathbb R^{m\times m}$ is the observation matrix of components $G_{ij}:= \lambda_i(\phi_j)$. We point out that the matrix $G$ may not be invertible but $\widetilde \bom$ belongs to its range by construction. Yet, if $G^{-1}$ denotes the Moore-Penrose pseudo inverse of $G$, we select one of the solutions $\bU$ by the relation
\begin{equation}\label{e:u_star_inv_span}
\bU \ = \ G^{-1} \widetilde \bom.
\end{equation}
In order to simplify the discussion, from now on we assume that the Riesz representers are linearly independent, and thus $G$ is invertible. 

\subsection{Near-optimal recovery algorithm}\label{Sec: Algorithms}

In this section, we slightly modify the algorithm introduced in \cite{binev2024solving} for the approximation of the minimal norm interpolant $u^*_{\mathcal H}$. Compared to \cite{binev2024solving}, we provide a more general version of recovery in a Banach space $X$ with semi-norm $|.|_X$ that satisfies Assumption~\ref{ass:X}. However, the essence of the algorithm remains unchanged. It consists of replacing the Riesz representers in \eqref{e:u_star_span} and \eqref{e:obs_system} by computable approximations. Our goal is to achieve a near-optimal recovery $u^*$ in the sense that for some fixed $C \ge 1$, 
\begin{equation} \label{eq:nearopt}
\sup_{v \in \mathcal K_{\om}} |u^*-v|_X \le C R(\mathcal K_\omega)_X.
\end{equation}
In practice we instead assume that we are given $\epsilon>0$ with $\epsilon \lesssim R(K_\omega)_X$ and seek $u^*$ such that
\begin{equation} \label{eq:nearopt2}
\sup_{v \in \mathcal K_\om} |u^*-v|_X \le R(\mathcal K_\omega)_X+\epsilon.
\end{equation}

\begin{algorithm}[Recovery Algorithm in $X$]\label{algo:rec}~
\begin{itemize}
    \item[Input:] $\Omega$, $u_0$, $f$, $s$, measurement couples $(\lambda_j,\omega_j)_{j=1}^m$ and tolerances $\ep_1>0$, $\ep_2>0$;
    \item[Output:] Approximation $\widehat{u}$ of recovery {$u^*$}.
    \end{itemize}
\begin{enumerate}[Step 1.]
  \item Compute an approximation $\widehat{u_0}$ of $u_0$ that satisfies
  \begin{equation}\label{Eq: NROAlgEq0B}
  |\widehat{u}_0 - u_0|_{X} +\max_{i=1,...,m} |\widehat u_0(x_i)-u_0(x_i)| \ \leq \ \ep_1
  \end{equation}
  and compute the vector of modified measurements 
  $$
  \widehat \bom \ := \ (\omega_i - {\widehat{u_0}}(x_i))_{i=1}^m \subset \mathbb R^m.
  $$
  \item Compute approximations $\widehat{\phi}_j$ of each $\phi_j$, $1 \leq j \leq m$ that satisfies 
  \begin{equation}\label{Eq: NROAlgEqTB}
  |\widehat{\phi}_j - \phi_j|_{X}  +\max_{i=1,...,m} |\widehat \phi_j(x_i)-\phi_j(x_i)| \ \leq \ \ep_2.
  \end{equation}
  \item Assemble $\widehat{G} := (\widehat \phi_j(x_i))_{1 \leq i, j \leq m}$. 
  \item Set $\widehat u^*_{\mathcal H}:= \sum_{i=1}^m \widehat U_i \widehat \phi_i$, where the coefficients $\widehat \bU:=(\widehat U_i)_{i=1}^m$ are given by
  \begin{equation}\label{e:hat_U}
  \widehat{\bU} \ := \ \widehat G^{-1}\widehat \bom, 
  \end{equation}
  with $\widehat G^{-1}$ being the Moore-Penrose pseudo-inverse of $\widehat G$.
  \item Output $u^*:=\widehat u_0 + \widehat u^*_{\mathcal H}$.
  \end{enumerate}
\end{algorithm}

Theorem~\ref{Thm: NORB} below assesses the efficiency of the algorithm.
Its proof relies on the following lemma gathering several estimates proved in \cite{binev2024solving}.

\begin{lemma}\label{l:estim_G}
    Let $G := (\lambda_i(\phi_j))_{i,j=1}^m$ be the observation matrix and $\widehat G=(\lambda_i(\widehat \phi_j))_{i,j=1}^m$ its approximation constructed in Algorithm~\ref{algo:rec} with input tolerances $\ep_1>0, \ep_2>0$.
    There holds
    \begin{equation}\label{e:estim_G}
    \| G - \widehat G\|_{\ell_1\to \ell_1} \ \leq \ m\ep_2.
    \end{equation}
    Furthermore, assume that $G$ is invertible. Then there is $\overline{\ep}_2>0$ depending on $\| G^{-1}\|_{\ell_1 \to \ell_1}$ and $m$ such that for $\ep_2 \leq \overline{\ep}_2$, $\widehat G$ is invertible and we have
\begin{equation}\label{e:delta}
\|G^{-1} - \widehat{G}^{-1}\|_{\ell_1\to \ell_1} \ \leq \ \frac{m \| G^{-1}\|_{\ell_1\to \ell_1}^2\ep_2}{1 - m \| G^{-1}\|_{\ell_1\to \ell_1}\ep_2} \ =: \ \de.
\end{equation}
\end{lemma}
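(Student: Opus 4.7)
The plan is to prove the two estimates separately, with part two reducing to a standard Neumann series perturbation argument once part one is established.

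For the first estimate \eqref{e:estim_G}, I would recall that the induced $\ell_1 \to \ell_1$ matrix norm equals the maximum absolute column sum and that by construction $(G-\widehat G)_{ij} = \lambda_i(\phi_j - \widehat\phi_j) = (\phi_j - \widehat\phi_j)(x_i)$. Therefore
\begin{equation*}
\| G - \widehat G\|_{\ell_1\to\ell_1} = \max_{1\le j\le m}\sum_{i=1}^m |(\phi_j - \widehat\phi_j)(x_i)| \le m \max_{1\le j\le m}\max_{1\le i\le m}|(\phi_j-\widehat\phi_j)(x_i)|,
\end{equation*}
and an application of the pointwise bound \eqref{Eq: NROAlgEqTB} finishes the step.

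For the second estimate, I would write $\widehat G = G(I+F)$ with $F:=G^{-1}(\widehat G - G)$ and estimate $\|F\|_{\ell_1\to\ell_1} \le \|G^{-1}\|_{\ell_1\to\ell_1}\|G-\widehat G\|_{\ell_1\to\ell_1} \le m\|G^{-1}\|_{\ell_1\to\ell_1}\epsilon_2$ using part one. Choosing $\overline\epsilon_2$ (for instance $\overline\epsilon_2 := 1/(2m\|G^{-1}\|_{\ell_1\to\ell_1})$) so that $m\|G^{-1}\|_{\ell_1\to\ell_1}\epsilon_2<1$, the Neumann series gives invertibility of $I+F$, hence of $\widehat G$, with $\|(I+F)^{-1}\|_{\ell_1\to\ell_1} \le (1 - m\|G^{-1}\|_{\ell_1\to\ell_1}\epsilon_2)^{-1}$. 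Combining yields
\begin{equation*}
\|\widehat G^{-1}\|_{\ell_1\to\ell_1} \le \frac{\|G^{-1}\|_{\ell_1\to\ell_1}}{1 - m\|G^{-1}\|_{\ell_1\to\ell_1}\epsilon_2}.
\end{equation*}

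Finally, I would use the standard resolvent identity $G^{-1} - \widehat G^{-1} = \widehat G^{-1}(\widehat G - G)G^{-1}$, take $\ell_1\to\ell_1$ norms, and substitute the bound on $\|\widehat G^{-1}\|_{\ell_1\to\ell_1}$ just obtained together with part one to arrive at the claimed $\delta$. No significant obstacle is anticipated; the only mild subtlety is pinning down $\overline\epsilon_2$ so that the Neumann argument closes and the denominator in $\delta$ remains positive, which is handled by the explicit choice above.
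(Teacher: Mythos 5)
Your proposal is correct: the column-sum characterization of the $\ell_1\to\ell_1$ norm together with \eqref{Eq: NROAlgEqTB} gives \eqref{e:estim_G}, and the Neumann-series perturbation argument with the identity $G^{-1}-\widehat G^{-1}=\widehat G^{-1}(\widehat G-G)G^{-1}$ yields exactly \eqref{e:delta}. This is the same standard argument the paper relies on (it does not reprove the lemma but cites \cite{binev2024solving}, where these estimates are established in precisely this way).
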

%

We are now in position to provide the near-optimal recovery theorem.

\begin{theorem}[Near-optimal recovery in $X$]\label{Thm: NORB}
    Assume $(X, |\cdot|_X)$ is a Banach space satisfying Assumption~\ref{ass:X}, that $u_0 \in X \cap C(\Omega)$, and that the observation matrix $G$ is invertible. Let $\ep > 0$. Select $\ep_1, \ep_2>0$ such that $\ep_2 \leq \overline{\ep}_2$, where $\overline{\ep}_2$ is given by Lemma~\ref{l:estim_G} and such that 
    \begin{equation}\label{Eq: NORB}
        \ep_1 +m \|G^{-1}\|_{\ell_1\to \ell_1} ({
        \Lambda_{s,d}C_B})\ep_2 +(C_X+\ep_2)(m \|G^{-1}\|_{\ell_1\to \ell_1}\ep_1 +m{\Lambda_{s,d}C_B}+\ep_1 )\delta) \ \leq \ \ep.
    \end{equation}
    Here $C_X := \max_{1 \leq  j \leq m}|\phi_j|_{X}<\infty$, $\delta$ is given by \eqref{e:delta}, and $\La_0$ and $\La_s$ are defined in \eqref{e:Lambda0} and \eqref{e:Lambdas} resp. Then the function $\widehat{u}^*$ generated by Algorithm \ref{algo:rec} with parameters $\ep_1,\ep_2$ satisfies 
    \begin{equation}\label{Eq: NORThmB}
        |v - \widehat u^*|_{X} \ \leq \ R(\mathcal{K}_{\om})_{X} + \ep, \qquad \forall v \in \mathcal K_\omega.
    \end{equation}
  That is, $\widehat{u}^*$ is a near-optimal recovery in $(X,|.|_X)$ for functions in $\mathcal{K}_{\om}$.
  \end{theorem}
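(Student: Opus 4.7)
The plan is to introduce the exact minimal norm interpolant $u^*:=u_0+u^*_{\mathcal H}$ from \eqref{Eq: MinNorm}--\eqref{e:u_star_span} as a pivot and use the triangle inequality
\[
|v-\widehat u^*|_X \ \leq \ |v-u^*|_X+|u^*-\widehat u^*|_X.
\]
The first term is handled directly by the optimal recovery property of $u^*$: because $v\in\mathcal K_\omega$ decomposes as $v=u_0+v_{\mathcal H}$ with $v_{\mathcal H}\in \mathcal K^{\mathcal H}_{\widetilde\omega}$, translation invariance of $|\cdot|_X$ gives $|v-u^*|_X=|v_{\mathcal H}-u^*_{\mathcal H}|_X$, and then \eqref{Eq: GenericBEstimateOverview1} yields $|v-u^*|_X\leq R(\mathcal K^{\mathcal H}_{\widetilde\omega})_X=R(\mathcal K_\omega)_X$. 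The whole remaining work is therefore to show that the discretization error satisfies $|u^*-\widehat u^*|_X\leq \varepsilon$ under hypothesis \eqref{Eq: NORB}.

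For the discretization error I would split $u^*-\widehat u^*=(u_0-\widehat u_0)+(u^*_{\mathcal H}-\widehat u^*_{\mathcal H})$. The first summand contributes at most $\varepsilon_1$ by \eqref{Eq: NROAlgEq0B}. For the harmonic part, write
\[
u^*_{\mathcal H}-\widehat u^*_{\mathcal H} \ = \ \sum_{i=1}^m U_i(\phi_i-\widehat\phi_i)+\sum_{i=1}^m (U_i-\widehat U_i)\widehat\phi_i,
\]
apply the triangle inequality together with $|\phi_i|_X\leq C_X$, $|\phi_i-\widehat\phi_i|_X\leq \varepsilon_2$ (hence $|\widehat\phi_i|_X\leq C_X+\varepsilon_2$), to obtain
\[
|u^*_{\mathcal H}-\widehat u^*_{\mathcal H}|_X \ \leq \ \varepsilon_2\,\|\mathbf U\|_{\ell_1}+(C_X+\varepsilon_2)\,\|\mathbf U-\widehat{\mathbf U}\|_{\ell_1}.
\]

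Now I would bound the two $\ell_1$ quantities using Lemma~\ref{l:estim_G} and the a priori measurement estimates \eqref{e:Lambda0}--\eqref{e:estim_mod_meas}. Since $\mathbf U=G^{-1}\widetilde{\mathbf\omega}$ and $\|\widetilde{\mathbf\omega}\|_{\ell_1}\leq m\max_i|\widetilde\omega_i|\leq m(2\Lambda_0\|f\|_{H^{-1+\beta_\Omega}(\Omega)}+\Lambda_{s,d}C_B)$, we get
\[
\|\mathbf U\|_{\ell_1}\leq m\,\|G^{-1}\|_{\ell_1\to\ell_1}\bigl(2\Lambda_0\|f\|_{H^{-1+\beta_\Omega}(\Omega)}+\Lambda_{s,d}C_B\bigr).
\]
Writing $\mathbf U-\widehat{\mathbf U}=G^{-1}(\widetilde{\mathbf\omega}-\widehat{\mathbf\omega})+(G^{-1}-\widehat G^{-1})\widehat{\mathbf\omega}$, using that $\widetilde\omega_i-\widehat\omega_i=\widehat u_0(x_i)-u_0(x_i)$ (so $\|\widetilde{\mathbf\omega}-\widehat{\mathbf\omega}\|_{\ell_1}\leq m\varepsilon_1$), estimating $\|\widehat{\mathbf\omega}\|_{\ell_1}\leq m(\Lambda_{s,d}C_B+\Lambda_0\|f\|_{H^{-1+\beta_\Omega}(\Omega)}+\varepsilon_1)$, and invoking \eqref{e:delta} gives
\[
\|\mathbf U-\widehat{\mathbf U}\|_{\ell_1}\leq m\,\|G^{-1}\|_{\ell_1\to\ell_1}\varepsilon_1+m\bigl(\Lambda_{s,d}C_B+\Lambda_0\|f\|_{H^{-1+\beta_\Omega}(\Omega)}+\varepsilon_1\bigr)\,\delta.
\]

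Collecting all three pieces precisely reproduces the left-hand side of hypothesis \eqref{Eq: NORB}, so $|u^*-\widehat u^*|_X\leq \varepsilon$ and combining with the Chebyshev bound from the first paragraph yields \eqref{Eq: NORThmB}. The only subtle point is bookkeeping: matching each of the three inequalities to the corresponding term in \eqref{Eq: NORB} requires consistent use of the two natural upper bounds on $|\widetilde\omega_i|$ (either via $|\omega_i|+|\lambda_i(u_0)|$ or via $|u_{\mathcal H}(x_i)|$); this is the step most prone to arithmetic slippage, but it is not a genuine obstacle. Invertibility of $\widehat G$, needed to make $\widehat{\mathbf U}$ well defined in \eqref{e:hat_U}, is guaranteed by the assumption $\varepsilon_2\leq\overline{\varepsilon}_2$ through Lemma~\ref{l:estim_G}.
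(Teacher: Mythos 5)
Your proposal is correct and follows essentially the same route as the paper's proof: pivot on the exact minimal norm interpolant $u^*=u_0+u^*_{\mathcal H}$, split off the $\ep_1$ contribution from $\widehat u_0$, expand $u^*_{\mathcal H}-\widehat u^*_{\mathcal H}$ via the representers to get $\ep_2\|\bU\|_{\ell_1}+(C_X+\ep_2)\|\bU-\widehat\bU\|_{\ell_1}$, and bound $\bU-\widehat\bU$ by splitting through $G^{-1}\widehat\bom$ together with Lemma~\ref{l:estim_G} and the measurement bounds \eqref{e:estim_meas}--\eqref{e:estim_mod_meas}. The only cosmetic difference is that you write the splitting $\bU-\widehat\bU=G^{-1}(\widetilde\bom-\widehat\bom)+(G^{-1}-\widehat G^{-1})\widehat\bom$ directly rather than naming the intermediate vector $\widetilde\bU:=G^{-1}\widehat\bom$ as the paper does, which is the same argument.
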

\begin{proof} 
Let $v\in \mathcal K_w$. From the definition of $R(\mathcal K_w)_X$ in {\eqref{Eq: ChebyRadDef}}, we directly find that
$$
 |v - \widehat u^*|_{X} \ \leq \ R(\mathcal K_w)_X + | u^* - \widehat u^*|_X,
$$
where $u^*=u_0+u^*_{\mathcal H}$ and $u^*_{\mathcal H}$ is the minimal norm interpolant; see \eqref{Eq: MinNorm}. 
Furthermore, in view of \eqref{Eq: NROAlgEq0B}, we get
$$
|u^* - \widehat u^*|_{X} \ \leq \ |u_0 - \widehat{u}_0|_X + | u^*_{\mathcal H}-\widehat u^*_{\mathcal H}|_{X} \leq \ep_1 +  | u^*_{\mathcal H}-\widehat u^*_{\mathcal H}|_X
$$
so that 
\begin{equation}\label{e:recov_tmp}
|v - \widehat u^*|_X  \ \leq \ R(\mathcal K_w)_X + \ep_1 +  | u^*_{\mathcal H}-\widehat u^*_{\mathcal H}|_X.
\end{equation}

The rest of the proof focuses on the derivation of an estimate of the discrepancy in the harmonic part $ | u^*_{\mathcal H}-\widehat u^*_{\mathcal H}|_{X}$. 
Recall that $u^*_{\mathcal H}=\sum_{i=1}^m U_i \phi_i$, where $U_i \in \mathbb R$, $i=1,...,m$, satisfy the algebraic system \eqref{e:u_star_span}. We compute
 \begin{equation}\label{e:to_estim_meas}
  |u^*_{\mathcal H}-\widehat u_{\mathcal H}|_{X} \ \leq \ \Big|\sum_{j=1}^m U_j(\phi_j- \widehat \phi_j)\Big|_{X}+\Big|\sum_{j=1}^m (U_j-\widehat U_j) \widehat \phi_j\Big|
  _{X}
\  \leq \ \|\bU\|_{\ell_1}\ep_2+\|\bU-\widehat \bU\|_{\ell_1}( C_X+\ep_2),
\end{equation}
where we have used the approximation estimate \eqref{Eq: NROAlgEqTB} for the Riesz representers and the definition of $C_X= \max_{i=1,...,m} | \phi_i |_{X}$.
Note that $C_X \leq \Lambda_X \max_{i=1,...,m} \| \phi_i \|_{\mathcal H^s(\Omega)} < \infty$ thanks to \eqref{e:embbed_X}.

The algebraic relation \eqref{e:u_star_span} for $\bU$ yields
\begin{equation*}
  \| \bU \|_{\ell_1}=\|G^{-1} \widetilde \bom \|_{\ell_1}
  \le    \|G^{-1}\|_{\ell_1\to \ell_1} \| \widetilde \bom\|_{\ell_1} \leq  m  \|G^{-1}\|_{\ell_1\to \ell_1}
   \|\widetilde \bom \|\leq m  \|G^{-1}\|_{\ell_1\to \ell_1}{ \Lambda_{s,d}C_B} 
\end{equation*}
  where we used the estimate \eqref{e:estim_mod_meas} on the modified measurement values.

  Combining the previous two estimates, we obtain
  \begin{equation}
  \label{vest}
  |{u^*_{\mathcal{H}}-\widehat u_{\mathcal{H}}}|_{X}\leq m \|G^{-1}\|_{\ell_1\to \ell_1} {\Lambda_{s,d} C_B}\ep_2+\|\bU-\widehat \bU\|_{\ell_1}( C_X+\ep_2).
\end{equation}
  For the estimation of $\|\bU-\widehat \bU\|_{\ell_1}$, we introduce the intermediate vector $\widetilde \bU:= G^{-1} \widehat \bom \in \R^m$ and compute
$$  
  \|\widetilde \bU - \bU\|_{\ell_1}
\  = \ \|G^{-1}(\widehat \bom-\widetilde \bom)\|_{\ell_1}
  \ \leq \ \|G^{-1}\|_{\ell_1\to \ell_1} \|\widehat \bom-\widetilde \bom\|_{\ell_1}
  \ \leq \ m \|G^{-1}\|_{\ell_1\to \ell_1}\|\widehat \bom-\widetilde \bom\|.
$$ 
Notice that $\widehat \omega_i-\widetilde \omega_i = \lambda_i(u_0-\widehat u_0) \leq \ep_1$ thanks to \eqref{Eq: NROAlgEq0B} and so
$$
\|\widetilde \bU - \bU\|_{\ell_1}  \ \leq \ m\|G^{-1}\|_{\ell_1\to \ell_1} \e_1.
$$
Furthermore, in view of \eref{e:estim_meas}, \eref{Eq: NROAlgEq0B}, and estimate {\eqref{e:delta}} for $G^{-1}-\widehat G^{-1}$, we have
$$
  \|\widetilde \bU -  \widehat \bU\|_{\ell_1}
  =\|(G^{-1}-\widehat G^{-1})\widehat \bom\|_{\ell_1} \ \leq \ \|(G^{-1}-\widehat G^{-1})\|_{\ell_1\to\ell_1} \|\widehat \bom\|_{\ell_1}
\ \leq \ m({\Lambda_{s,d}C_B} +\ep_1) \delta.
$$ 
Combining the estimates for $\| \bU - \widetilde \bU\|_{\ell_1}$ and $\| \widetilde \bU - \widehat \bU\|_{\ell_1}$, 
   we find that
$$ 
  \|\bU-\widehat \bU\|_{\ell_1}
\ \leq \ m\|G^{-1}\|_{\ell_1\to \ell_1}\e_1 +m({\Lambda_{s,d}C_B} +\ep_1 )\delta.
$$  
 We now insert this bound into \eref{vest} and invoke \eref{Eq: NORB} to obtain 
  \begin{equation*}
            |u_{\mathcal H}^*-\widehat u_{\mathcal H}|_{X} \ \leq \ {\ep - \ep_1}.
  \end{equation*}
The statement of the theorem follows using the above inequality in \eqref{e:recov_tmp}.
  \end{proof}

{ We end this section noting that a similar analysis holds when the measurements are noisy, i.e., one observes $\bom = \bom^{exact}+{\boldsymbol \eta}$ where $\bom^{exact}$ is the exact measurement vector and the noise ${\boldsymbol \eta}$ satisfies $\| {\boldsymbol \eta}\| \leq \delta$ for some $\delta>0$. We refer to \cite{binev2024optimal} for similar considerations. In contrast, the case of Gaussian noise is more intricate and subject to further studies.
}

\section{Finite element realizations of the recovery algorithms}\label{s:FEM}

In this section, we discuss how to construct $\widehat u_0$ and $\widehat \phi_j$, $j=1,...,m$ satisfying \eqref{Eq: NROAlgEq0B} and \eqref{Eq: NROAlgEqTB}.

\subsection{Finite element setting}\label{Subsec: ApproxFESetting}


Recall that $\Om$ is assumed to be Lipschitz and polygonal ($n = 2$) or polyhedral ($n = 3$).
With $h$ denoting the maximal diameter within a given mesh, let $(\mathcal{T}_h)_{h > 0}$ denote a family of exact sub-divisions made of triangles ($n=2)$ or tetrahedra ($n=3$) of $\Om$. We also denote by $\mathcal{T}_h^\Gamma$ the restriction of $\mathcal{T}_h$ to $\Gamma$. Below $\mathcal{N}_h^\Gamma$ is the set of vertices in the mesh $\mathbb{T}_h$, and given $z \in \mathcal{N}_h^\Gamma$, $\omega_z$ is the patch of elements in $\mathbb{T}_h^\Gamma$ sharing the vertex $z$.

We assume that $(\mathcal T_h)_{h>0}$ is shape-regular, that is, all elements $T$ have a uniformly bounded ratio between their diameters $h(T)$ and the diameter $\rho(T)$ of their inner circle.
We additionally assume that the mesh is quasi-uniform, i.e., $h(T')$ is uniformly equivalent to $h$ for all $T' \in \mathcal{T}_h$.  We let $\mathbb{V}_h$
denote the continuous piecewise linear finite element space associated with $\mathcal{T}_h$. In addition, we denote
\begin{equation}\label{Eq: Wh}
    \mathbb{V}_h^0 \ := \ \{v_h \in \mathbb{V}_h \ | \ T(v_h) = 0\}
\end{equation}
as the finite element subspace with a homogeneous (Dirichlet) boundary condition.  The restriction to $\Gamma$ of functions in $\mathbb V_h$ is
$$
\mathbb T_h \ := \ \{ T(v_h) \ : \ v_h \in \mathbb V_h\},
$$
where $T:H^{\sigma+1/2}(\Omega) \rightarrow H^{\sigma}(\Gamma)$ for $0<\sigma<3/2$ is the trace operator introduced in Section~\ref{Subsec: Prelim}.
Since $\mathbb V_h \subset H^1$ and we have assumed that $s < \frac{3}{2}$ (where we recall that $s$ is as in \eqref{e:model_g}), we also have $\mathbb{T}_h \subset H^s(\Gamma)$.

We shall also need finite element approximation results and inverse estimates in $H^{\sig}(\Gamma)$ for $0 \le \sigma < \frac{3}{2}$.  We first state a foundational localization result.  
\begin{proposition}[Localization] \label{prop: localization}
Let $u \in H^\sigma(\Gamma)$. When $0 < \sigma <1$, there holds
\begin{equation} \label{faermann_localization}
    \|u\|_{H^\sigma(\Gamma)}^2 \ \lesssim \  h^{-2\sigma} \|u\|_{L_2(\Gamma)}^2 + \sum_{z \in \mathcal{N}} |u|_{H^\sigma (\omega_z)}^2.
\end{equation}
 If $0<\sigma <\frac{1}{2}$, there additionally holds
\begin{equation} \label{pasciak_localization}
    \|u\|_{H^\sigma(\Gamma)}^2 \ \lesssim \ h^{-2\sigma} \|u\|_{L_2(\Gamma)}^2 +\sum_{T \in \mathcal{T}_h^\Gamma} |u|_{H^\sigma(T)}^2.
\end{equation}
When instead $1<\sigma<\frac{3}{2}$, we have for $\alpha=\sigma-1$ that
\begin{equation} \label{fine_localization}
    \|u\|_{H^\sigma(\Gamma)} ^2 \ \lesssim \ \|u\|_{L_2(\Gamma)}^2 + h^{-2\alpha} \|\nabla_\Gamma u\|_{L_2(\Gamma)}^2 + \sum_{T \in \mathcal{T}_h^\Gamma}|\nabla_\Gamma u|_{H^{\alpha}(T)}^2.
\end{equation}
\end{proposition}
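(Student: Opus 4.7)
The plan is to prove each of the three estimates by working directly with the Slobodeckij double integral in \eqref{Eq: Hr} and splitting pairs $(x,y)\in\Gamma\times\Gamma$ into a ``near'' and a ``far'' contribution at scale $h$. The first estimate is the classical Faermann-type patch localization; the second is a finer element-level version that relies on the restriction $\sigma<1/2$; the third follows from the second by applying it to $\nabla_\Gamma u$.

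For \eqref{faermann_localization}, I would fix a constant $\kappa>0$, depending only on shape regularity, so that $B(x,\kappa h)\cap\Gamma$ is contained in some patch $\omega_{z(x)}$ for every $x\in\Gamma$. Then split
\begin{equation*}
|u|_{H^\sigma(\Gamma)}^2 \ = \ \iint_{|x-y|\le \kappa h}\frac{|u(x)-u(y)|^2}{|x-y|^{(n-1)+2\sigma}}\,dx\,dy + \iint_{|x-y|> \kappa h}\frac{|u(x)-u(y)|^2}{|x-y|^{(n-1)+2\sigma}}\,dx\,dy.
\end{equation*}
The near part is absorbed into $\sum_{z\in\mathcal N_h^\Gamma}|u|_{H^\sigma(\omega_z)}^2$, each patch being counted a bounded number of times thanks to shape regularity. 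The far part is handled by the elementary bound $|u(x)-u(y)|^2 \le 2|u(x)|^2+2|u(y)|^2$ together with Fubini, since the inner integral $\int_{|x-y|>\kappa h}|x-y|^{-(n-1)-2\sigma}\,dy$ converges and scales as $h^{-2\sigma}$, producing the $h^{-2\sigma}\|u\|_{L_2(\Gamma)}^2$ term.

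For the sharper bound \eqref{pasciak_localization}, the assumption $\sigma<1/2$ is essential because it makes extension by zero a bounded operator $H^\sigma(T)\to H^\sigma(\Gamma)$. The plan is to decompose $u=\sum_T \chi_T u$ pointwise on $\Gamma$ and then, via the zero-extension characterization valid for $\sigma<1/2$, bound each piece by $|u|_{H^\sigma(T)} + h^{-\sigma}\|u\|_{L_2(T)}$. The pairwise cross contributions between distinct elements are controlled by the same far-part argument used in the previous step, so that only the element seminorms and the $h^{-2\sigma}\|u\|_{L_2(\Gamma)}^2$ remainder survive.

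Finally, for \eqref{fine_localization} with $\alpha=\sigma-1\in(0,1/2)$, I would invoke the equivalence $\|u\|_{H^\sigma(\Gamma)}^2 \sim \|u\|_{L_2(\Gamma)}^2 + |\nabla_\Gamma u|_{H^\alpha(\Gamma)}^2$ for $1<\sigma<3/2$ on the piecewise flat surface $\Gamma$, and then apply \eqref{pasciak_localization} to $\nabla_\Gamma u\in H^\alpha(\Gamma)$. The main obstacle is the element-level localization \eqref{pasciak_localization}: controlling pairs $(x,y)$ in adjacent elements sharing a face without losing the sharp $h^{-2\sigma}$ scaling in front of $\|u\|_{L_2(\Gamma)}^2$. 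This is precisely where $\sigma<1/2$ is unavoidable, since for $\sigma\ge 1/2$ the characteristic function $\chi_T$ no longer lies in $H^\sigma$ and the element decomposition breaks down, leaving the patch-level estimate \eqref{faermann_localization} as the best available.
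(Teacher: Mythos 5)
Your outline is, in substance, the same route the paper takes. The paper proves this proposition largely by citation: \eqref{faermann_localization} is Faermann's localization \cite{faermann2000localization, faermann2002localization}, whose proof is exactly your near/far splitting of the Slobodeckij double integral at scale $h$ (pairs with $|x-y|\le \kappa h$ lie in a common vertex patch by shape regularity and quasi-uniformity, and the far pairs give $h^{-2\sigma}\|u\|_{L_2(\Gamma)}^2$ because $\int_{\{|x-y|>\kappa h\}\cap\Gamma}|x-y|^{-(n-1)-2\sigma}\,dy\lesssim h^{-2\sigma}$); \eqref{pasciak_localization} is quoted from \cite[Lemma A.1]{BLP19}, and the paper also sketches an alternative proof using that indicator functions of subdomains are $H^\sigma$ multipliers for $0<\sigma<\frac12$ \cite{Str67} together with bounded extension operators and \eqref{faermann_localization} --- which is the same $\sigma<\frac12$ zero-extension mechanism you invoke; and \eqref{fine_localization} is obtained, exactly as you propose, by combining \eqref{pasciak_localization} applied to the tangential derivatives with the definition \eqref{Eq: Hr} (for $1<\sigma<\frac32$ the top-order part of $\|u\|_{H^\sigma(\Gamma)}^2$ is by definition the $H^{\sigma-1}$ seminorm of $\nabla_\Gamma u$, so no separate ``equivalence'' is needed).

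The one step that does not work as written is your treatment of the cross contributions in \eqref{pasciak_localization}: you assign the pairwise interactions between distinct elements to ``the same far-part argument,'' but for $T\neq T'$ sharing a vertex, edge or face the pairs $(x,y)\in T\times T'$ can be arbitrarily close, so the far-part kernel bound gives nothing there --- and this is precisely the case you later single out as the main obstacle without closing it. The fix is the ingredient you already name. For $\sigma<\frac12$, the scaled zero-extension (Hardy-type) bound $\int_T|u(x)|^2\,\mathrm{dist}(x,\partial T)^{-2\sigma}\,dx\lesssim |u|_{H^\sigma(T)}^2+h^{-2\sigma}\|u\|_{L_2(T)}^2$ (equivalently, $\|\chi_T u\|_{H^\sigma(\Gamma)}\lesssim |u|_{H^\sigma(T)}+h^{-\sigma}\|u\|_{L_2(T)}$ after scaling to the reference element) controls $\iint_{T\times T'}\bigl(|u(x)|^2+|u(y)|^2\bigr)|x-y|^{-(n-1)-2\sigma}\,dy\,dx$ for touching $T,T'$, since $|x-y|\ge \mathrm{dist}(x,\partial T)$ for $y\notin T$; because each element has a uniformly bounded number of neighbours, summing over touching pairs yields $\sum_{T}|u|_{H^\sigma(T)}^2+h^{-2\sigma}\|u\|_{L_2(\Gamma)}^2$, while non-touching pairs are separated by $\gtrsim h$ and are indeed handled by the far-part argument. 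With that substitution your proof of \eqref{pasciak_localization} closes, and \eqref{fine_localization} then follows as you state and as in the paper.
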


The well-known result \eqref{faermann_localization} is found in \cite{faermann2000localization, faermann2002localization}.  The finer result \eqref{pasciak_localization} is found in \cite[Lemma A.1]{BLP19}, while \eqref{fine_localization} is obtained by a straightforward combination of \eqref{pasciak_localization} and the definition of $\|\cdot\|_{H^\sigma(\Gamma)}$. We emphasize that \eqref{pasciak_localization} and \eqref{fine_localization} have only element-wise terms on the right hand side.  The proof of \eqref{pasciak_localization} in \cite{BLP19} involves mainly elementary computations involving double integrals along with the use of bounded extension operators in $H^\sigma$ norms.  A different proof can be obtained by noting from \cite{Str67} that indicator functions of subdomains are multipliers for $H^\sigma(\mathbb{R}^{n-1})$ when $0<\sigma<\frac{1}{2}$.  Combining this result with bounded extension operators for $H^\sigma$ and \eqref{faermann_localization}, while noting that the Jacobian of the natural patchwise reference transformation from $\omega_z$ to a corresponding Euclidean reference patch is piecewise constant also yields \eqref{pasciak_localization} and \eqref{fine_localization}.  

We denote by $P_h: H^1(\Omega) \rightarrow \mathbb V_h$ the Scott-Zhang projection operator on $\Omega$ \cite{SZ} and recall that it preserves boundary values and is $H^1$ stable in the following sense:
\begin{equation}\label{Eq: szproj}
    P_h v_h|_\Gamma \ = \ v_h|_\Gamma ~~ \forall v_h \in \mathbb V_h, \qquad \textrm{and} \qquad \|P_hv\|_{H^1(\Omega)}\leq B\|v\|_{H^1(\Omega)},\quad v\in H^1(\Omega),
\end{equation}
for some $B$ independent of $h$. We shall use standard approximation properties of $P_h$ in Sobolev spaces on $\Omega$ without citation.

We also need a Scott-Zhang projection operator on $\Gamma$ that is consistent with $P_h$. 
For each $z \in \mathcal{N}_h^\Gamma$, choose an arbitrary $T_z \in {\mathcal{T}_h^\Gamma}$ with $z \in \overline{T_z}$, let $\phi_z$ be the standard Lagrange basis function corresponding to $z$, and let $\psi_z \in \mathbb{P}^1$ satisfy $\int_{T_{i}} \phi_{z_i} \psi_{z_j}= \delta_{ij}$.  Then define
\begin{equation} \label{eq: pihdef}
(\pi_h u )(x)= \sum_{z \in \mathcal{N}_h^\Gamma} \phi_z(x) \int_{T_z} u \psi_z.    
\end{equation}
The faces $T_z$ in \eqref{eq: pihdef} may be chosen so that the definitions of $\pi_h$ and $P_h$ are consistent, i.e.,
\begin{equation}\label{e:pi_hP_h}
(P_h v)|_\Gamma = \pi_h (v_\Gamma).
\end{equation}
We shall use this property in our analysis. 
Moreover, standard arguments yield
\begin{equation}\label{e:stab_pi_h_sigma0}
\| \pi_h \kappa \|_{L_2(\Gamma)}\lesssim \| \kappa \|_{L_2(\Gamma)}, \qquad \forall \kappa \in L_2(\Gamma)
\end{equation}
and for $0<\xi \leq 1$, there holds
\begin{equation} \label{approx_smaller_sigma0}
    \|\kappa-\pi_h \kappa\|_{L_2(\Gamma)} \ \lesssim \ h^{\xi} |\kappa|_{H^\xi(\Gamma)}, \qquad \forall \kappa \in H^\xi(\Gamma).
\end{equation}

We shall need stability and approximability properties of $\pi_h$ on $H^\sigma(\Gamma)$ for $0\leq \sigma < 3/2$ and these are the subject of Lemmas~\ref{frac_approximation} and~\ref{frac_approximation2} below. In preparation for these results, we introduce yet another Scott-Zhang type interpolant. 
When $n=3$ we let $I_h: H^1(\Gamma) \rightarrow \mathbb{T}_h$ be the Scott-Zhang interpolant defined using integrals over triangle edges (instead of triangles as in \eqref{eq: pihdef}) as degrees of freedom while when $n=2$ (and so $\Gamma$ is a 1D curve) we take $I_h$ to be the Lagrange interpolant.  
Note that $I_h$ is locally defined and locally stable in $H^\sigma{(\Gamma)}$ for any $\sigma \ge 1$.
Lemma~3.1 and Theorem~3.2 in \cite{CD15} guarantee that if $1 \le \sigma < \frac{3}{2}$,  $\xi \ge \sigma$, and $T \in \mathcal{T}_h^\Gamma$, there holds
\begin{equation} \label{Eq: camacho-demlow}
\|\kappa-I_h \kappa\|_{H^\sigma(T)}^2 \lesssim h^{2(\xi-\sigma)} \sum_{T' \subset \omega_{T'}} |\kappa|_{H^\xi(T')}^2.
\end{equation}
Here $\omega_{T'}$ consists of all elements $T \in \mathcal T_h^\Gamma$ such that $T\cap T' \not = \emptyset$.
 The result as stated in \cite{CD15} assumes that $\sigma$ and $\xi$ are integers, but the proof holds nearly verbatim for fractional $\sigma, \xi$ after application of the element-wise Bramble-Hilbert Lemma for fractional norms (cf. \cite[Theorem 6.1]{DS80}). Instead, when $n=2$, $I_h$ is the Lagrange interpolant and is element-wise stable on the 1D surface $\Gamma$, i.e., \eqref{Eq: camacho-demlow} holds with right-hand side $h^{2(\xi-\sigma)} |\kappa|^2_{H^\xi(T)}$.  
The projection $I_h$ is not stable in $L_2(\Gamma)$ and does not satisfy the consistency relation \eqref{e:pi_hP_h}. It is merely used as a tool to obtain estimates similar as \eqref{Eq: camacho-demlow} for $\pi_h$, see Lemmas~\ref{frac_approximation} and~\ref{frac_approximation2} below.

We also note that to measure the regularity of a function $\kappa:\Gamma \rightarrow \mathbb R$ when $\Gamma$ is polynomial/polyhedral, it is necessary to modify the definition of $H^{\xi}(\Gamma)$ for $\xi \geq 3/2$ to avoid degeneracy across edges and vertices of $\Gamma$.
We define 
\begin{equation}\label{e:broken_spaces}
H^\xi_h(\Gamma) :=
\begin{cases}
    H^\xi(\Gamma) & \qquad \textrm{when }0<\xi <\frac{3}{2} ;\\
    \Pi_{T\in \mathcal T_h^\Gamma} H^\xi(T) & \qquad \textrm{when } \frac{3}{2}\leq \xi \leq 2,
\end{cases}
\end{equation}
and set $|\kappa|_{\tilde{H}^\xi(\Gamma)} := |\kappa|_{H^\xi(\Gamma)}$ when $0<\xi<\frac{3}{2}$ and $|\kappa|_{\tilde{H}^\xi(\Gamma)}^2 := \sum_{T \in \mathbb{T}_h} |\kappa|_{H^\xi(T)}^2$  when $\frac{3}{2} \le \xi \le 2$.
As alluded to above, the distinction between $\xi<\frac{3}{2}$ and $\xi\ge \frac{3}{2}$ in the definition of $H^\xi_h(\Gamma)$ is critical when considering polygonal/polyhedral surfaces $\Gamma$. Measuring regularity in these spaces globally or on element patches that may overlap multiple faces of $\Gamma$ is not feasible. 
 On the other hand, as we discuss below, solutions to elliptic problems on polyhedral surfaces may possess $H^\xi_h(\Gamma)$ regularity with $\xi \ge \frac{3}{2}$ even when they do not possess such {\it global} regularity.

\begin{lemma} \label{frac_approximation}
Let $\kappa \in H^\sigma(\Gamma)$ for some $0 \le \sigma<\frac{3}{2}$.  There holds
\begin{equation} \label{Eq: interp_stability}
\|\pi_h \kappa\|_{H^\sigma(\Gamma)} \lesssim \|\kappa\|_{H^\sigma(\Gamma)} \hbox{ if } 0 \le \sigma <\frac{3}{2}, \qquad \|I_h \kappa\|_{H^\sigma(\Gamma)} \lesssim \|\kappa\|_{H^\sigma(\Gamma)} \hbox{ if } 1 \le \sigma < \frac{3}{2}.
\end{equation}
\end{lemma}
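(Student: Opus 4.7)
The plan is to show $\|\pi_h \kappa - \kappa\|_{H^\sigma(\Gamma)} \lesssim \|\kappa\|_{H^\sigma(\Gamma)}$, from which the stated stability follows by triangle inequality; the same strategy handles $I_h$. The case $\sigma=0$ is exactly \eqref{e:stab_pi_h_sigma0}. For $\sigma>0$, I would select the localization estimate from Proposition~\ref{prop: localization} matching the range of $\sigma$ and control each summand using $L_2$-stability and approximation of $\pi_h$ together with an element-wise fractional seminorm estimate
$$
|\pi_h \kappa - \kappa|_{H^\sigma(T)} \lesssim |\kappa|_{H^\sigma(\omega_T)},
$$
which I would establish by a Bramble-Hilbert argument on a reference patch: since $\pi_h$ preserves local constants, subtract the mean $\bar\kappa$ of $\kappa$ on $\omega_T$, use local $L_2$-stability of $\pi_h$ together with an inverse estimate for the polynomial $\pi_h(\kappa-\bar\kappa)|_T$, and close via a fractional Poincar\'e inequality $\|\kappa-\bar\kappa\|_{L_2(\omega_T)} \lesssim h^\sigma |\kappa|_{H^\sigma(\omega_T)}$.

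For $0<\sigma<1/2$, applying \eqref{pasciak_localization} to $\pi_h\kappa-\kappa$ produces $h^{-2\sigma}\|\pi_h\kappa-\kappa\|_{L_2(\Gamma)}^2 \lesssim \|\kappa\|_{H^\sigma(\Gamma)}^2$ via \eqref{approx_smaller_sigma0} with $\xi=\sigma$, while the element-wise sum is controlled by the local estimate above together with finite overlap of patches, $\sum_T |\kappa|_{H^\sigma(\omega_T)}^2 \lesssim |\kappa|_{H^\sigma(\Gamma)}^2$. The range $1/2\le\sigma<1$ is handled identically but uses the vertex-patch localization \eqref{faermann_localization} in place of \eqref{pasciak_localization}, since element-wise indicator functions fail to be multipliers on $H^\sigma(\Gamma)$ in this regime.

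The main obstacle is the case $1\le\sigma<3/2$, for which I would invoke the refined localization \eqref{fine_localization} with $\alpha:=\sigma-1\in[0,1/2)$, producing three summands. The $L_2$-term is handled by standard $L_2$-approximation of $\pi_h$. For the second, the Scott-Zhang estimate $\|\nabla_\Gamma(\pi_h\kappa-\kappa)\|_{L_2(\Gamma)} \lesssim h^{\sigma-1}\|\kappa\|_{H^\sigma(\Gamma)}$ exactly balances the $h^{-2\alpha}$ penalty factor. The crucial observation is that since $\pi_h\kappa\in\mathbb{T}_h$ is piecewise linear on $\Gamma$, its tangential gradient $\nabla_\Gamma\pi_h\kappa$ is constant on each $T$, so $|\nabla_\Gamma\pi_h\kappa|_{H^\alpha(T)}=0$ whenever $\alpha>0$; only $\sum_T|\nabla_\Gamma\kappa|_{H^\alpha(T)}^2 \le |\nabla_\Gamma\kappa|_{H^\alpha(\Gamma)}^2 \lesssim \|\kappa\|_{H^\sigma(\Gamma)}^2$ survives. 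The bound for $I_h$ in the range $1\le\sigma<3/2$ follows the same blueprint, substituting the $H^1$-stability of $I_h$ and its approximation estimate \eqref{Eq: camacho-demlow} (which supplies both $L_2$- and $H^1$-approximation with $\xi=\sigma$) for the corresponding properties of $\pi_h$; because $I_h\kappa$ is likewise piecewise linear, the third summand again vanishes identically.
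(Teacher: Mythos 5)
Your treatment of the ranges $\sigma=0$ and $0<\sigma<1$ is sound and essentially the same as the paper's: a patchwise Bramble--Hilbert argument exploiting that $\pi_h$ reproduces constants, combined with the localization estimates \eqref{pasciak_localization} and \eqref{faermann_localization}, the $L_2$ results \eqref{e:stab_pi_h_sigma0} and \eqref{approx_smaller_sigma0}, and finite overlap of patches. The $I_h$ statement for $1\le\sigma<\frac{3}{2}$ is also fine, since \eqref{Eq: camacho-demlow} with $\xi=\sigma$ gives element-wise control (the $H^1(T)$ norm on its left-hand side already contains the $L_2(T)$ part, compensating for the lack of $L_2$-stability of $I_h$), and your observation that the element-wise $H^\alpha(T)$ seminorm of the piecewise-constant tangential gradient of a $\mathbb{T}_h$ function vanishes is a correct, and in fact slightly more direct, way to kill the top-order term in \eqref{fine_localization} than the paper's element-wise inverse estimate.

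The gap is in the case $1<\sigma<\frac{3}{2}$ for $\pi_h$, where you invoke ``the Scott--Zhang estimate'' $\|\nabla_\Gamma(\pi_h\kappa-\kappa)\|_{L_2(\Gamma)}\lesssim h^{\sigma-1}\|\kappa\|_{H^\sigma(\Gamma)}$ without proof. In this setting that is not an off-the-shelf estimate: the classical proof of fractional-regularity approximation bounds for $\pi_h$ subtracts an affine function on the patch $\omega_T$ and runs a patchwise Bramble--Hilbert argument, and on a polygonal/polyhedral surface the patches straddle edges and vertices of $\Gamma$, where affine functions and fractional seminorms of order $\sigma-1$ of tangential gradients (vector fields lying in different tangent planes on different faces) do not behave as on a flat domain; avoiding such patchwise fractional arguments above order one is exactly why the paper introduces $I_h$ and the element-wise machinery. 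The bound you need is precisely the case $\sigma=1$, $\xi\in(1,\frac{3}{2})$ of \eqref{approx_smaller} in Lemma~\ref{frac_approximation2}, which is proved \emph{after} the stability lemma via the splitting $\kappa-\pi_h\kappa=(\kappa-I_h\kappa)-\pi_h(\kappa-I_h\kappa)$, using \eqref{Eq: camacho-demlow} for $I_h$ and the $H^1$-stability of $\pi_h$ (the $\sigma\le1$ part of \eqref{Eq: interp_stability}, which you do have at that point). Your argument is repaired by exactly this insertion of $I_h\kappa$: bound $\|\nabla_\Gamma(\kappa-I_h\kappa)\|_{L_2(\Gamma)}$ by \eqref{Eq: camacho-demlow} with $\xi=\sigma$, and $\|\nabla_\Gamma\pi_h(I_h\kappa-\kappa)\|_{L_2(\Gamma)}$ by $H^1$-stability of $\pi_h$ followed again by \eqref{Eq: camacho-demlow}. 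With that step supplied your route closes, and it then coincides in substance with the paper's proof, which writes $\pi_h v=\pi_h(v-I_hv)+I_hv$ and proceeds through \eqref{fine_localization}.
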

\begin{proof}
    The stability estimate for $I_h$ follows by taking $1 \le \xi=\sigma< \frac{3}{2}$ in \eqref{Eq: camacho-demlow}, {applying the localization result \eqref{fine_localization}}, and summing the resulting estimates over $T \in \mathcal T_h^\Gamma$ {while recalling the finite overlap property of the patches $\omega_{T'}$}.

    We now turn our attention to the stability of $\pi_h$. The case $\sigma=0$ is \eqref{e:stab_pi_h_sigma0} and so we consider $0 <  \sigma  \le 1$. Let $z \in \mathcal N^{\Gamma}_h$. Since $|C|_{H^\sigma(\omega_z)} = 0$ {and $\pi_h C=C$} for any $C \in \mathbb R$, $\Gamma$ is a Lipschitz polyhedral domain, and $ 0 < \sigma \le 1$, we may use standard finite element reference maps and scaling arguments.  In particular, $\#\{T\in \mathcal T^\Gamma_h \ | \ T \subset \omega_z \}$ is uniformly bounded depending on the shape regularity of $\{ \mathcal T_h \}_{h>0}$ and the maximum valence of vertices of $\Gamma$, and thus $\mathbb{T}_h|_{\omega_z}$ is a finite dimensional space of uniformly bounded dimension. Therefore, \eqref{eq: pihdef}, standard scaling arguments, and the Bramble-Hilbert Lemma yield for properly chosen $C \in \mathbb{R}$ that
\begin{equation} \label{eq: stable1}
\begin{aligned} 
|\pi_h \kappa|_{H^\sigma(\omega_z)}  & = |\pi_h (\kappa-C)|_{H^\sigma(\omega_z)}  
= \left| \sum_{z' \in \mathcal{N}_{\omega_z}} \phi_{z'} \int_{T_{z'}} (\kappa-C) \psi_{z'}\right|_{H^\sigma(\omega_z)}
\\ &  \lesssim \sum_{z' \in \mathcal{N}_{\omega_z}} |\phi_{z'}|_{H^\sigma(\omega_z)} \|\kappa-C\|_{L_2(T_{z'})} \|\psi_{z'}\|_{L_2(T_{z'})} 
\\ &  \lesssim  \sum_{z' \in \mathcal{N}_{\omega_z}} h^{(n-1)/2-\sigma} \|\kappa-C\|_{L_2(T_{z'})} h^{-(n-1)/2}
 \lesssim h^{-\sigma} \|\kappa-C\|_{L_2(\omega_{z}')} \lesssim |\kappa|_{H^\sigma(\omega_{z}')}.
\end{aligned}    
\end{equation}
Here $\mathcal{N}_{\omega_z}$ is the set of nodes lying in $\overline{\omega_z}$ and $\omega_{z}':= \cup_{z' \in \mathcal N^\Gamma_h, \ z' \in \omega_z} \omega_{z'}$ is the patch of elements touching $\omega_z$.   Note that we have applied a form of the Bramble-Hilbert Lemma that applies to unions of star-shaped domains \cite[Theorem 7.1]{DS80}, since $\omega_z$ is star-shaped but $\omega_z'$ may not be. {$C$ is taken to be an appropriate degree-0 polynomial having the stated approximation properties, which is guaranteed to exist by the cited version of the Bramble-Hilbert Lemma.}  Thus employing \eqref{faermann_localization}, \eqref{approx_smaller_sigma0}, finite overlap of the patches $\omega_z'$, and \eqref{eq: stable1} we obtain
\begin{equation*}
\begin{aligned}
\|\pi_h \kappa\|_{H^\sigma(\Gamma)}^2 & \lesssim \|\kappa-\pi_h \kappa\|_{H^\sigma(\Gamma)}^2  +\|\kappa\|_{H^\sigma(\Gamma)}^2 
\\ & \lesssim h^{-2\sigma} \|\kappa-\pi_h \kappa\|_{L_2(\Gamma)}^2 + \sum_{z \in \mathcal{N}} \left [ |\kappa|_{H^\sigma(\omega_z)}^2 + |\pi_h \kappa|_{H^\sigma(\omega_z)}^2 \right ] + \|\kappa\|_{H^\sigma(\Gamma)}^2
\\ & \lesssim \|\kappa\|_{H^\sigma(\Gamma)}^2.  
\end{aligned}
\end{equation*}
This is the asserted global stability \eqref{Eq: interp_stability} of $\pi_h$ for $0 <\sigma \le 1$. 

Next we prove \eqref{Eq: interp_stability} for $\pi_h$ and $1<\sigma<\frac{3}{2}$.  Applying $\pi_h v_h =v_h$ for $v_h \in \mathbb{T}_h$, \eqref{fine_localization}, \eqref{Eq: interp_stability} for $I_h$, an element-wise inverse estimate, \eqref{Eq: interp_stability} for $\pi_h$ for $\sigma=0,1$, and \eqref{Eq: camacho-demlow} with $\sigma \leftarrow 1$ and $\xi \leftarrow \sigma$ we obtain
\begin{equation}
\begin{aligned}
\| \pi_h v \|_{H^\sigma(\Gamma)}^2 & \lesssim \|\pi_h (v-I_h v)\|_{H^\sigma(\Gamma)}^2 + \|I_h v\|_{H^\sigma(\Gamma)}^2
 \\ & \lesssim \|\pi_h (v-I_h v)\|_{L_2(\Gamma)}^2 + h^{-2(\sigma-1)}\|\pi_h (v-I_h v)\|_{H^1(\Gamma)}^2 
 \\ & ~~~~~+ \sum_{T \in \mathcal{T}_h^\Gamma} |\pi_h(v-I_h v)|_{H^\sigma(T)}^2+ \|v\|_{H^\sigma(\Gamma)}^2
 \\ & \lesssim \|v-I_h v\|_{L_2(\Gamma)}^2 + h^{-2(\sigma-1)} \|v-I_h v\|_{H^1(\Gamma)}^2 + \|v\|_{H^\sigma(\Gamma)}^2  \lesssim \|v\|_{H^\sigma(\Gamma)}^2. 
\end{aligned}    
\end{equation}
This ends the proof.
\end{proof}

\begin{lemma}\label{frac_approximation2}
Let  $0<\xi \leq 2$, $0\leq \sigma <3/2$ and assume that $\kappa \in H^\xi_h(\Gamma)$.
For $0\leq \sigma\leq \xi$ there holds
\begin{equation} \label{approx_smaller}
    \|\kappa-\pi_h \kappa\|_{H^\sigma(\Gamma)} \lesssim h^{\xi-\sigma} |\kappa|_{H_h^\xi(\Gamma)}.
\end{equation}
Instead, when $1\leq \sigma \leq \xi$, we have
\begin{equation}\label{approx_top}
    \|\kappa-I_h \kappa\|_{H^\sigma(\Gamma)} \lesssim h^{\xi-\sigma} | \kappa|_{H_h^\xi(\Gamma)}.
\end{equation}
\end{lemma}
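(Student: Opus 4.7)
The plan is to establish \eqref{approx_top} first and then bootstrap it to \eqref{approx_smaller}. For \eqref{approx_top} with $\sigma=1$, I sum the element-wise estimate \eqref{Eq: camacho-demlow} over $T \in \mathcal T_h^\Gamma$; the patches have uniformly bounded overlap by shape regularity, so $\sum_T \sum_{T' \subset \omega_T} |\kappa|^2_{H^\xi(T')} \lesssim |\kappa|^2_{H^\xi_h(\Gamma)}$, which yields $\|\kappa - I_h\kappa\|_{H^1(\Gamma)} \lesssim h^{\xi-1}|\kappa|_{H^\xi_h(\Gamma)}$. For the range $1 < \sigma < 3/2$, I apply the localization \eqref{fine_localization} to $u = \kappa - I_h \kappa$: the $\|u\|_{L_2(\Gamma)}$ and $\|\nabla_\Gamma u\|_{L_2(\Gamma)}$ terms are controlled by the $\sigma = 1$ case (the weight $h^{-2(\sigma-1)}$ combining with $h^{2(\xi-1)}$ to give $h^{2(\xi-\sigma)}$), while the element-wise term $\sum_T |\nabla_\Gamma u|^2_{H^{\sigma-1}(T)}$ is bounded directly by summing \eqref{Eq: camacho-demlow} with parameters $(\sigma, \xi)$.

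For \eqref{approx_smaller}, the principal identity is $\kappa - \pi_h\kappa = (\kappa - q) - \pi_h(\kappa - q)$, valid for any $q \in \mathbb T_h$ because $\pi_h$ fixes $\mathbb T_h$. Combined with the $H^\sigma$-stability of $\pi_h$ from Lemma~\ref{frac_approximation}, this reduces the task to bounding $\|\kappa - q\|_{H^\sigma(\Gamma)}$ for a well-chosen $q$. When $1 \leq \xi \leq 2$ I take $q = I_h \kappa$: \eqref{approx_top} directly handles $1 \leq \sigma < 3/2$, and for the subrange $0 \leq \sigma < 1$ I interpolate between the $\sigma = 1$ case of \eqref{approx_top} and a direct $L_2$ bound on $\|\kappa - I_h\kappa\|_{L_2(\Gamma)}$. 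When $0 < \xi \leq 1$, which forces $0 \leq \sigma \leq \xi$, I argue patchwise: the Dupont--Scott Bramble--Hilbert lemma for fractional norms on unions of star-shaped patches \cite[Theorem~7.1]{DS80} provides a constant $c_z$ with $\|\kappa - c_z\|_{H^\sigma(\omega_z')} \lesssim h^{\xi-\sigma}|\kappa|_{H^\xi(\omega_z')}$. Since $\pi_h c_z = c_z$, the local $H^\sigma$-stability of $\pi_h$ derived in \eqref{eq: stable1} yields $|\kappa - \pi_h\kappa|_{H^\sigma(\omega_z)} \lesssim h^{\xi-\sigma}|\kappa|_{H^\xi(\omega_z')}$; globalizing via the localization \eqref{faermann_localization} and absorbing the residual $\|\kappa - \pi_h\kappa\|_{L_2(\Gamma)}$ term using \eqref{approx_smaller_sigma0} completes this case.

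The main obstacle is the $L_2$ bound for $\kappa - I_h \kappa$ when $\xi \geq 1$, since the edge-integral version of $I_h$ is not globally $L_2$-stable. I would bypass this with a local bound $\|I_h u\|_{L_2(T)} \lesssim \|u\|_{L_2(T)} + h|u|_{H^1(T)}$ obtained from a standard trace inequality; subtracting a linear polynomial $p$ with $I_h p = p$ and invoking the Bramble--Hilbert lemma then gives $\|\kappa - I_h \kappa\|_{L_2(T)} \lesssim h^\xi |\kappa|_{H^\xi_h(\omega_T)}$, which sums to the global estimate. A secondary technical point is that for $\xi \geq 3/2$, $\kappa$ lies only in the broken space $H^\xi_h(\Gamma)$ and may have derivative jumps across edges of $\Gamma$; this causes no difficulty because all terms on the right-hand sides of \eqref{fine_localization} and \eqref{Eq: camacho-demlow} are purely element-wise.
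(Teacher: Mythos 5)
Your treatment of \eqref{approx_top}, of the range $0<\xi\le 1$ (patchwise constants from \cite[Theorems 6.1, 7.1]{DS80}, the local stability \eqref{eq: stable1}, globalization via \eqref{faermann_localization} and \eqref{approx_smaller_sigma0}), and of the case $1\le\sigma\le\xi$ (decomposition through $I_h\kappa$, stability \eqref{Eq: interp_stability} of $\pi_h$, then \eqref{approx_top}) coincides with the paper's proof. The divergence is in the sub-case $0\le\sigma<1<\xi\le 2$, and there your argument has a genuine soft spot. You reduce to $\|\kappa-I_h\kappa\|_{H^\sigma(\Gamma)}$ and propose to interpolate between an $H^1$ bound and an $L_2$ bound, where the $L_2$ bound $\|\kappa-I_h\kappa\|_{L_2(T)}\lesssim h^\xi|\kappa|_{H^\xi_h(\omega_T)}$ is to come from a trace-based local $L_2$ bound on $I_h$ plus ``subtract a linear polynomial and apply Bramble--Hilbert.'' This step does not go through as written: when the patch $\omega_T$ straddles an edge or vertex of the polyhedral surface there is no single affine function defined on $\omega_T$, and the Bramble--Hilbert lemma cannot be invoked with the \emph{broken} seminorm $|\kappa|_{H^\xi_h(\omega_T)}$ on the right, since the kernel of that seminorm (facewise polynomials glued continuously) is much larger than the polynomial space quotiented out in the standard lemma. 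The cited estimate \eqref{Eq: camacho-demlow} only covers $\sigma\ge 1$, so it does not rescue the $L_2$ case; producing an element-wise right-hand side at order $\sigma=0$ would require redoing the \cite{CD15}-type argument, which is exactly the kind of machinery the paper avoids (and it explicitly notes that $I_h$ is not $L_2$-stable). In addition, the global interpolation inequality $\|v\|_{H^\sigma(\Gamma)}\lesssim\|v\|_{L_2(\Gamma)}^{1-\sigma}\|v\|_{H^1(\Gamma)}^{\sigma}$ on the closed polyhedral surface is true but nowhere established in the paper, so it would need a justification (e.g.\ bi-Lipschitz charts and the Euclidean result).

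The paper closes this sub-case with a short bootstrap that makes both of your missing ingredients unnecessary: writing $\kappa-\pi_h\kappa=(\kappa-I_h\kappa)-\pi_h(\kappa-I_h\kappa)$ and applying the \emph{already proved} estimate \eqref{approx_smaller} with $\xi=1$ to the function $\kappa-I_h\kappa\in H^1(\Gamma)$ gives $\|\kappa-\pi_h\kappa\|_{H^\sigma(\Gamma)}\lesssim h^{1-\sigma}|\kappa-I_h\kappa|_{H^1(\Gamma)}$, and then \eqref{approx_top} with $\sigma=1$ supplies the factor $h^{\xi-1}|\kappa|_{H^\xi_h(\Gamma)}$. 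No $L_2$ estimate for $I_h$ and no global interpolation inequality are needed. If you replace your interpolation step by this bootstrap, the rest of your proposal is sound.
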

\begin{proof}
We start by proving \eqref{approx_top}. 
 When $\sigma=1$ \eqref{approx_top} follows immediately from \eqref{Eq: camacho-demlow}, while when $1<\sigma<\frac{3}{2}$ the estimate \eqref{Eq: camacho-demlow} is combined with \eqref{fine_localization} applied to $u=\kappa-I_h \kappa$ along with the result for the case $\sigma=1$. The first two terms are handled by the case $\sigma=1$ and the third term using \eqref{Eq: camacho-demlow}. This completes the proof of \eqref{approx_top}.

We now prove \eqref{approx_smaller}.
The case $\sigma=0$ is \eqref{approx_smaller_sigma0}. For $0<\sigma \le \xi \le 1$, let $C_z$ be the constant Sobolev polynomial over the star-shaped domain $\omega_z$, yielding $|\kappa-C_z|_{H^\sigma(\omega_z)} \lesssim h^{\xi-\sigma} |\kappa|_{H^\xi(\omega_z)}$ (cf. \cite[Theorem 6.1]{DS80}).  With this same choice of $C_z$, \cite[Theorem 7.1]{DS80} yields $\|\kappa-C_z\|_{L_2(\omega_z')} \lesssim h^{\xi}|\kappa|_{H^\xi(\omega_z')}$.  Applying these observations, \eqref{faermann_localization}, \eqref{approx_smaller} for $\sigma=0$,  scaling and inverse inequalities, local $L_2$ stability of $\pi_h$, and the finite overlap of the patches $\omega_z'$ then yields
\begin{equation}
\begin{aligned}
\|\kappa-\pi_h \kappa\|_{H^\sigma(\Gamma)}^2 & \lesssim h^{-2\sigma} \|\kappa-\pi_h \kappa\|_{L_2(\Gamma)}^2 + \sum_{z \in \mathcal{N}} |(\kappa-C_z)-\pi_h(\kappa-C_z)|_{H^\sigma(\omega_z)}^2
\\ & \lesssim h^{2(\xi-\sigma)} |\kappa|_{H^\xi(\Gamma)} + \sum_{z \in \mathcal{N}} \left [|\kappa-C_z|_{H^\sigma(\omega_z)}^2 + h^{-2\sigma} \|\kappa-C_z\|_{L_2(\omega_z')}^2 \right] 
\\ & \lesssim h^{2(\xi-\sigma)} |\kappa|_{H^\xi(\Gamma)} + h^{2(\xi-\sigma)} \sum_{z \in \mathcal{N}} \left [ |\kappa|_{H^\xi(\omega_z)}^2 + |\kappa|_{H^\xi(\omega_z')}^2 \right ]
\\ & \lesssim h^{2(\xi-\sigma)} |\kappa|_{H^\xi(\Gamma)}.
\end{aligned}
\end{equation}
This completes the proof of \eqref{approx_smaller} for $\xi\le 1$.  





It remains to show \eqref{approx_smaller} for $1<\xi\leq 2$. We write $\kappa-\pi_h \kappa = (\kappa-I_h \kappa) -\pi_h (\kappa-I_h \kappa)$. If $\sigma<1$ we apply \eqref{approx_smaller} with $\xi=1$ followed by \eqref{approx_top} with $\sigma=1$.  If $\sigma \ge 1$ we apply the triangle inequality, \eqref{Eq: interp_stability} for $\pi_h$ to the resulting second term, and then \eqref{approx_top}. 
\end{proof}

We emphasize that in standard estimates for the Scott-Zhang operator, the sums over triangles on the right hand sides of \eqref{approx_smaller} with $\xi \leq 1$  and \eqref{approx_top} are replaced by corresponding sums over element patches $\omega_T$.  Reduction to an element-wise sum is critical in the current context because $H^\xi(\omega_T)$ may be degenerate when $\xi \ge \frac{3}{2}$.  

We finally state inverse estimates.  The localization results in Proposition \ref{prop: localization} are not sufficient to obtain inverse estimates for global fractional norms by using element-wise scaling arguments as is typically done for integer-order norms, so we instead employ interpolation techniques to obtain the needed results.  Note that we do not obtain (and do not need) local inverse estimates.  

\begin{proposition}[Inverse estimates]\label{p:inverse}
Let $v_h \in \mathbb{T}_h$.  For $0\le t<\sigma<\frac{3}{2}$, there holds
\begin{equation}
    \label{eq: inverse_estimate}
    \|v_h\|_{H^\sigma(\Gamma)} \ \lesssim \ h^{t-\sigma} \|v_h\|_{H^t(\Gamma)}.
\end{equation}
\end{proposition}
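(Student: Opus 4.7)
The strategy is to establish the base case $t=0$ directly from Proposition~\ref{prop: localization} and the piecewise linear structure of $\mathbb{T}_h$, and then to extend to the general range by interpolating linear operators on the finite-dimensional space $\mathbb{T}_h$.

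For the base case $\|v_h\|_{H^\sigma(\Gamma)} \lesssim h^{-\sigma}\|v_h\|_{L_2(\Gamma)}$ with $0 \le \sigma < \tfrac{3}{2}$, I proceed by cases. When $0<\sigma<1$, I apply the localization \eqref{faermann_localization}. On each patch $\omega_z$ the continuous piecewise-linear $v_h$ is globally Lipschitz with $\|\nabla_\Gamma v_h\|_{L_\infty(\omega_z)} \lesssim h^{-(n-1)/2}\|\nabla_\Gamma v_h\|_{L_2(\omega_z)}$; inserting $|v_h(x)-v_h(y)| \le \|\nabla_\Gamma v_h\|_{L_\infty(\omega_z)}|x-y|$ into the Slobodeckii kernel reduces the patchwise seminorm to a double integral $\iint_{\omega_z \times \omega_z}|x-y|^{2-(n-1)-2\sigma}\,dx\,dy$, which is finite for $\sigma<1$ and scales like $h^{(n-1)+2-2\sigma}$. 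Combining with the element-wise inverse $\|\nabla_\Gamma v_h\|_{L_2(\omega_z)}\lesssim h^{-1}\|v_h\|_{L_2(\omega_z)}$ yields $|v_h|^2_{H^\sigma(\omega_z)} \lesssim h^{-2\sigma}\|v_h\|^2_{L_2(\omega_z)}$, and finite overlap of the patches completes the sum. The case $\sigma=1$ is the classical element-wise $H^1$-inverse estimate, and for $1<\sigma<\tfrac{3}{2}$ the localization \eqref{fine_localization} with $\alpha=\sigma-1$ reduces matters to the $H^1$-inverse estimate applied to the first two right-hand-side terms, together with the observation that $|\nabla_\Gamma v_h|_{H^\alpha(T)}=0$ since $\nabla_\Gamma v_h$ is piecewise constant.

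For the general range $0 \le t<\sigma <\tfrac{3}{2}$, I view the identity $I\colon\mathbb{T}_h \to \mathbb{T}_h$ as an operator with fixed target norm $\|\cdot\|_{H^\sigma(\Gamma)}$ and varying source norm. The base case gives $\|I\|_{L_2\to H^\sigma}\lesssim h^{-\sigma}$, while $\|I\|_{H^\sigma\to H^\sigma}=1$ trivially. Interpolation of linear operators with a fixed target then yields $\|I\|_{[L_2,H^\sigma]_\theta\to H^\sigma}\lesssim h^{-\sigma(1-\theta)}$ for $\theta\in(0,1)$, and choosing $\theta=t/\sigma$ together with the standard identification $[L_2(\Gamma),H^\sigma(\Gamma)]_\theta = H^{\theta\sigma}(\Gamma)=H^t(\Gamma)$ produces $\|v_h\|_{H^\sigma}\lesssim h^{t-\sigma}\|v_h\|_{H^t}$, which is \eqref{eq: inverse_estimate}.

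I anticipate that the delicate step is the interpolation identification $[L_2(\Gamma),H^\sigma(\Gamma)]_\theta = H^t(\Gamma)$ on the polyhedral surface $\Gamma$: this is classical via the Lions--Magenes theory for $0<\sigma\le 1$, but requires extra care for $1<\sigma<\tfrac{3}{2}$, where it can be obtained by reiteration together with the equivalence of Slobodeckii spaces and real interpolation spaces below the critical exponent $\tfrac{3}{2}$. A secondary technical point is the control of cross-element contributions to $|v_h|^2_{H^\sigma(\omega_z)}$ in the base case via the \emph{global} patchwise Lipschitz constant, which is necessary because an element-wise decomposition of the Slobodeckii seminorm is unavailable.
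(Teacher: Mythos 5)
Your base case $t=0$ is essentially sound and is in fact more elementary than anything in the paper: combining \eqref{faermann_localization} with the patchwise Lipschitz bound for the continuous piecewise linear $v_h$ (valid since geodesic and Euclidean distances are comparable on patches of a Lipschitz polyhedral surface), and using \eqref{fine_localization} together with $|\nabla_\Gamma v_h|_{H^{\sigma-1}(T)}=0$ for $1<\sigma<\tfrac32$, does give $\|v_h\|_{H^\sigma(\Gamma)}\lesssim h^{-\sigma}\|v_h\|_{L_2(\Gamma)}$. The genuine gap is in your passage to general $t$. You interpolate the identity viewed as an operator \emph{on} $\mathbb{T}_h$, with source norms $L_2$ and $H^\sigma$. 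Operator interpolation then only controls $\|v_h\|_{H^\sigma(\Gamma)}$ by $h^{-\sigma(1-\theta)}$ times the norm of the interpolation space of the \emph{subspace couple} $\bigl(\mathbb{T}_h,\|\cdot\|_{L_2}\bigr)$, $\bigl(\mathbb{T}_h,\|\cdot\|_{H^\sigma}\bigr)$, whose $K$-functional is an infimum over decompositions $v_h=a+b$ with $a,b\in\mathbb{T}_h$. That constrained $K$-functional dominates the unconstrained one, so the bound you obtain is in terms of a norm that is a priori \emph{larger} than $\|v_h\|_{H^{\theta\sigma}(\Gamma)}$; the inequality you actually need, that the subspace-couple interpolation norm is bounded by $\|v_h\|_{H^t(\Gamma)}$ uniformly in $h$, is exactly the nontrivial point and does not follow from the identification $(L_2,H^\sigma)_{\theta,2}=H^{\theta\sigma}$ of the full spaces. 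This is why the paper interpolates not the identity on $\mathbb{T}_h$ but quasi-interpolants ($I_h$ face-wise, $\pi_{h,z}$ on flattened vertex patches) that are defined and stable on the whole couple and reduce to the identity on $\mathbb{T}_h$.

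The repair is close to what you already have: using Lemma \ref{frac_approximation}, the operator $\pi_h$ is stable $L_2(\Gamma)\to L_2(\Gamma)$ and $H^\sigma(\Gamma)\to H^\sigma(\Gamma)$ for $0\le\sigma<\tfrac32$, so your base case gives the endpoint bounds $\|\pi_h v\|_{H^\sigma(\Gamma)}\lesssim h^{-\sigma}\|v\|_{L_2(\Gamma)}$ and $\|\pi_h v\|_{H^\sigma(\Gamma)}\lesssim\|v\|_{H^\sigma(\Gamma)}$; interpolating $\pi_h$ over the full couple and using $\pi_h v_h=v_h$ yields \eqref{eq: inverse_estimate}. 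A secondary caveat: you assert the identification $(L_2(\Gamma),H^\sigma(\Gamma))_{\theta,2}=H^{\theta\sigma}(\Gamma)$ globally on the closed polyhedral surface, including exponents above $1$; the paper deliberately avoids this by localizing to the flat faces $\Gamma_j$ (for $t\ge1$) and to vertex macro-patches mapped bi-Lipschitzly to Euclidean domains (for $t\le\sigma\le1$), where the identification is available for Lipschitz domains in $\R^{n-1}$. You flagged this identification as the delicate step, but the subspace-interpolation issue above is the more serious obstruction.
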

\begin{proof}
Denote by $\{\Gamma_j\}_{j=1}^J$ the boundary edges (when $n=2$) or boundary faces (when $n=3$) of $\Gamma$, and let $1 \le t<\sigma<\frac{3}{2}$. By {application of \eqref{fine_localization} to a coarse triangulation associated with $\{\Gamma_j\}^{J}_{j = 1}$ (so that $h \simeq 1$ in \eqref{fine_localization}) },
\begin{equation} \label{eq: global_localization}
\|\kappa\|_{H^\sigma(\Gamma)}^2 \ \lesssim \ \sum_{j=1}^J \|\kappa\|_{H^\sigma(\Gamma_j)}^2, ~~\kappa \in H^\sigma(\Gamma).
\end{equation}
Each face $\Gamma_j$ is a Lipschitz domain in $\mathbb{R}^{n-1}$.  Theorem B.8 of \cite{mclean00} (cf. Theorems 3.18, 3.30, and A.4 for verification of definitions) implies that for $\sigma_0, \sigma_1 \ge 0$ that $(H^{\sigma_0}(\Gamma_j), H^{\sigma_1}(\Gamma_j))_{\theta, 2} = H^\sigma(\Gamma_j)$ with $\sigma =(1-\theta){\sigma_0}+ \theta {\sigma_1}$.  Here $(\cdot, \cdot)_{\theta,2}$ denotes real interpolation via the $K$-method.  Note that $\|v_h\|_{H^\sigma(\Gamma_j)} \lesssim h^{1-\sigma} \|v_h\|_{H^1(\Gamma_j)}$ due to \eqref{fine_localization} and a standard element-wise scaling argument for the term $|v_h|_{H^\sigma(T)}$. We carry out operator interpolation (cf. \cite[Theorem 7.1 in Chapter 6]{DL93}) for the Scott-Zhang interpolant $I_h$ on $\Gamma_j$ with endpoint estimates $\|I_h \kappa \|_{H^\sigma(\Gamma_j)} \lesssim \|\kappa\|_{H^\sigma(\Gamma_j)}$ and $\|I_h \kappa\|_{H^\sigma(\Gamma_j)} \lesssim h^{1-\sigma} \|\kappa\|_{H^1(\Gamma_j)}$ (the latter obtained by combining an inverse estimate with the $H^1$ stability of $I_h$).  This yields $I_h : H^t(\Gamma_j) \rightarrow H^\sigma(\Gamma_j)$ with operator norm $C h^{t-\sigma}$, and recalling that $I_h$ is the identity on $\mathbb{T}_h$ yields the face-wise inverse estimate $\|v_h\|_{H^\sigma(\Gamma_j)} \lesssim h^{t-\sigma} \|v_h\|_{H^t(\Gamma_j)}$, $1\le t<\sigma<\frac{3}{2}$.  Summing over $j=1,...,J$ then completes the proof of \eqref{eq: inverse_estimate} for $1 \le t<\sigma<\frac{3}{2}$.  

Let now $0 \le t<\sigma \le 1$.  Let $\mathcal{N}$ denotes the set of vertices of the polygon or polyhedron $\Gamma$.  We assume that the faces of $\Gamma$ are triangular; if not then we break the non-triangular faces into triangles having vertices lying in $\mathcal{N}_z$ and proceed as below using the expanded set of triangular ``faces''.  For each $z \in \mathcal{N}$ let $\Gamma_z = \cup_{\Gamma_j  : z \in \overline{\Gamma}_j} \overline{\Gamma}_j$, where as above we denote by $\{\Gamma_j\}$ the set of faces of $\Gamma$.   The localization estimate \eqref{faermann_localization} yields
\begin{equation}\label{macro_localization}
\|u\|_{H^\sigma(\Gamma)}^2 \ \le \ \|u\|_{L_2(\Gamma)}^2 + \sum_{z \in \mathcal{N}} |u|_{H^\sigma(\Gamma_z)}^2, ~~u \in H^\sigma(\Gamma),~~0<\sigma\le 1.
 \end{equation}
Since $\Omega$ is a Lipschitz polyhedral domain, $\Gamma_z$ is the graph of a bi-Lipschitz map $\Phi_z$ over a Euclidean domain $\widetilde{\Gamma}_z \subset \mathbb{R}^{n-1}$.  Notice $\widetilde{\Gamma}_z$ is a line segment when $n=2$ and is a polygon when $n=3$, and $\Phi_z$ has piecewise constant Jacobian.  With these properties, $\Phi_z$ also induces a transformation of $\mathbb{T}_h|_{\Gamma_z}$ to a finite element space $\mathbb{X}_{h,z}$ on $\widetilde{\Gamma}_z$ possessing standard approximation and inverse properties.  As above we employ operator interpolation to obtain inverse estimates on $\widetilde{\Gamma}_z$; cf. \cite{KMRPP23} for similar arguments and finer results also valid on non-quasi-uniform meshes. Let $\pi_{h,z}$ be an $L_2$-stable Scott-Zhang interpolant onto $\mathbb{X}_{h,z}$ analogous to \eqref{eq: pihdef}; $\pi_{h,z}$ is then stable on $H^\sigma(\Gamma)$ for $0 \le \sigma \le 1 $ as with $\pi_h$, see Lemma~\ref{frac_approximation}.  Carrying out operator interpolation for $\pi_{h,z}$ on $\widetilde{\Gamma}_z$ with endpoint estimates $\|\pi_{h,z} u \|_{H^1(\widetilde{\Gamma}_z)} \lesssim \|u\|_{H^1(\widetilde{\Gamma}_z)}$, $\|\pi_{h,z} u\|_{H^1(\widetilde{\Gamma}_z)} \lesssim h^{-1} \|u\|_{L_2(\widetilde{\Gamma}_z)}$ (the latter obtained by combining an inverse estimate with the $L_2$ stability of $\pi_{h,z}$) and recalling that $\pi_{h,z}$ is the identity on $\mathbb{X}_{h,z}$ yields $\|\widetilde{v}_h\|_{H^\sigma(\widetilde{\Gamma}_z)} \lesssim h^{-\sigma} \|\widetilde{v}_h\|_{L_2(\widetilde{\Gamma}_z)}$, $0\le \sigma\le 1$. Repeating the argument with $H^\sigma(\widetilde{\Gamma}_z)$ replacing $H^1(\widetilde{\Gamma}_z)$, we obtain $\|\widetilde{v}_h\|_{H^\sigma(\widetilde{\Gamma}_z)} \lesssim h^{t-\sigma} \|\widetilde{v}_h\|_{H^t(\widetilde{\Gamma}_z)}$, $0 \le t \le \sigma \le 1$. Then, \eqref{macro_localization} and a mapping argument complete the proof of \eqref{eq: inverse_estimate} for $0 \le t <\sigma \le 1$.  The estimate for $t \le 1 \le \sigma$ is obtained by combining the previous two cases.  
\end{proof}

\begin{remark} {We recall that the constant hidden in $\lesssim$ may depend on $s$, and note in particular that the constant hidden in \eqref{eq: inverse_estimate} may degenerate when $s$ or $t$ lie near 0 or 1.  More precisely, the proof of the equivalence between the interpolation and Slobodeckij (double-integral) seminorms employed above eventually relies on \cite[Theorem 3.16]{mclean00}.  Tracing relevant constants in this theorem indicates such a degeneration.  }
\end{remark}




\subsection{Approximation of the Riesz representers - setting}\label{Subsec: ApproxRieszSetting}

The starting point is the partial differential equation \eqref{Eq: RieszOverview} on $\Gamma$  defining the Riesz representer $\phi_i$ associated with the measurement $\lambda_i$.
Recall that the scalar product in $\mathcal H^s(\Omega)$ is given by $\langle \phi_j,v\rangle_{\mathcal H^s(\Omega)} = \langle \phi_j,v\rangle_{H^s(\Gamma)}$ and thus, \eqref{Eq: RieszOverview} can be equivalently rewritten as $\phi_i = E\psi_i$ where $\psi_i \in H^s(\Gamma)$ satisfies
\begin{equation}\label{e:Riesz_bdy}
\langle \psi_i,v\rangle_{H^s(\Gamma)} \ = \ \lambda_i(Ev), \qquad \forall v \in H^s(\Gamma),
\end{equation}
where $E$ is the harmonic extension defined in \eqref{Eq: defE}.

We shall take advantage of the latter relation to design discrete approximations of the Riesz representers. In order to avoid an overload of notation, we use $\nu$ to denote one of the $\lambda_i$, $i=1,...,m$, and drop the index on the Riesz representer.  With these notations, our goal is to approximate $\phi = E \psi \in \mathcal H^s(\Omega)$, where $\psi \in H^s(\Gamma)$ satisfies
\begin{equation}\label{e:psi}
\langle \psi,v \rangle_{H^s(\Gamma)} \ = \ \nu(Ev), \qquad \forall v \in H^s(\Gamma).
\end{equation}

We proceed by introducing the discrete harmonic extension operator $E_h: \mathbb{T}_h \rightarrow \mathbb{V}_h$.  For any $g_h \in \mathbb{T}_h$:
\begin{equation}\label{Eq: Eh}
    E_hg_h \ := \ \argmin\{\|\grad v_h\|_{L_2(\Om; \R^n)} \ | \ v_h \in \mathbb{V}_h, \ T(v_h) = g_h\}.
\end{equation}
Alternatively, $E_h g_h$ is characterized by satisfying $T(E_hg_h) = g_h$ and the discrete harmonic property
\begin{equation}\label{Eq: discHarmonic}
    \int_{\Om}\grad(E_hg_h) \cdot \grad v_h \ = \ 0, \quad v_h \in \mathbb{V}_h^0;
\end{equation}
compare with \eqref{Eq: EgChar}.
We then define $\phi_h := E_h \psi_h \in \mathbb V_h$, where $\psi_h \in \mathbb T_h$ satisfies
\begin{equation}\label{e:psi_h}
\langle \psi_h,v_h \rangle_{H^s(\Gamma)} \ = \ \nu(E_hv_h), \qquad \forall v_h \in \mathbb T_h.
\end{equation}
Lax-Milgram theory guarantees the existence and uniqueness of $\psi_h \in \mathbb T_h$ and thus the existence and uniqueness of $\phi_h \in \mathbb V_h$. We postpone to Subsection \ref{Subsec: Saddle} a discussion on the reformulation of the discrete problem as a more practical saddle-point system.


\section{Regularity assumptions and results}\label{Subsec: regularityAssump}

The order of convergence for finite element approximations of the Riesz representers, and ultimately of the finite element approximation to the optimal recovery, depends on three main types of regularity:
\begin{enumerate}
\item Regularity of solutions to second-order elliptic problems posed on the polyhedral Euclidean domain $\Omega$.
\item Regularity of solutions to potentially fractional-order elliptic problems posed on the polyhedral surface $\Gamma$.
\item Boundedness of the harmonic extension operator $E$.  
\end{enumerate}
In all three cases we shall state or assume multiple types of regularity bounds, so our final convergence estimates involve a relatively complex interplay between a number of parameters.  We now define these parameters,   
the first of which appear when stating an elliptic regularity pick-up result on $\Omega$. 

\begin{assumption}[Regularity pickup on $\Om$]
\label{Assumption: R omega} 
There exists $\frac{1}{2} <\beta^* = \beta^*(\Om) \le \infty $ such that for all $0 < \beta \leq \beta^*$, if $\zeta \in H^{-1 + \be}(\Om)$, then the function $z_{\zeta} \in H^1_0(\Omega)$ satisfying
\begin{equation}\label{Eq: elliptic}
    \int_\Omega \grad z_{\zeta} \cdot \grad v \ = \ \lang \zeta, v\rang,  \quad v\in H^1_0(\Om)
\end{equation}
belongs to $H^{1 + \be}(\Om)$.
Here $\lang \zeta, v\rang$ denotes the $H^{-1}-H^{1}_0$ duality product. We also have the a priori estimate
    \begin{equation}\label{Eq: zVApriori}
        \|z_{\zeta}\|_{H^{1 + \be}(\Om)} \ \lesssim \ \|\zeta\|_{H^{-1+\beta}(\Omega)}.
    \end{equation}
\end{assumption}

For a non-convex polygonal/polyhedral bounded domain, there holds $\frac{1}{2}<\beta^*<1$.  When $\Om$ is smooth or convex we have $\beta^*\geq 1$.   More precisely:
\begin{itemize}
    \item $\beta^* = \infty$ if $\Om$ is smooth  \cite{evans2022partial, gilbarg1977elliptic}.
    \item $\beta^* \ge 1$ if $\Om$ is convex, with $\beta^*=1$ for a generic convex domain {\cite[Theorem 3.2.1.2]{grisvard2011elliptic}} and $\beta^* > 1$ for a convex polygonal or polyhedral domain with the precise value depending on the maximum vertex opening angle ($n=2$) or vertex and edge geometry ($n=3$) {\cite{Da92, dauge2006elliptic}}. 
    \item When $\Om$ is {Lipschitz and} polygonal but non-convex in $\R^2$, or {Lipschitz and} polyhedral but non-convex in $\R^3$, then $\frac{1}{2} < \beta^* < 1$ with the exact value depending on the maximum vertex opening angle ($n=2$; cf. \cite[Theorem 14.6]{dauge2006elliptic}) or vertex and edge geometry ($n=3$; cf. \cite[Theorem 1.1]{Da92}).  
\end{itemize}
Since we employ piecewise linear finite element spaces whose approximation properties cannot take advantage of higher regularity than $H^2(\Omega)$, we shall generally instead use the parameter
$$
\beta_\Omega \ := \ \min(\beta^*, 1) \qquad \textrm{so that} \qquad \frac 1 2 < \beta_\Omega \leq 1.
$$

 We next need to establish embeddings of $\mathcal{H}^\sigma(\Om)$ into $H^r(\Om)$. 

\begin{proposition}[Embedding $\mathcal{H}^{r^*+\frac{1}{2}}(\Om) \subset H^{r^*+1}(\Om)$]\label{Prop: EmbeddingRegularity}
Let $\beta^*$ as in Assumption~\ref{Assumption: R omega}. Let $0<r^*<1$ with $r^* \leq \beta^*$. Then $\mathcal{H}^{r^*+1/2}(\Om) \subset H^{r^*+1}(\Om)$.  In other words, there exists $C_e > 0$ such that
\begin{equation}\label{Eq: embedStable}
    \|Eg\|_{H^{r^*+1}(\Om)} \ \leq \ C_e\|g\|_{H^{r^*+1/2}(\Gamma)}
\end{equation}
holds for any $g \in H^{r^*+1/2}(\Gamma)$.
\end{proposition}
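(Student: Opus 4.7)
The plan is to reduce the harmonic extension problem to the homogeneous Dirichlet problem covered by Assumption~\ref{Assumption: R omega}. First, given $g \in H^{r^*+1/2}(\Gamma)$ with $0 < r^* < 1$, I would construct a lift $\tilde g \in H^{r^*+1}(\Om)$ satisfying $T\tilde g = g$ together with
\[
\|\tilde g\|_{H^{r^*+1}(\Om)} \ \lesssim \ \|g\|_{H^{r^*+1/2}(\Gamma)}.
\]
On the Lipschitz polyhedral domain $\Om$, such a bounded right inverse of the trace operator will be available for $0 < r^* < 1$ by combining a smooth partition of unity subordinate to $\Gamma$ with piecewise linear local flattenings of the boundary and the half-space extension afforded by the Poisson kernel, which maps $H^\sigma(\R^{n-1}) \to H^{\sigma+1/2}(\R^n_+)$ for all $\sigma>0$. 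Note that the boundary space $H^{r^*+1/2}(\Gamma)$ is non-degenerate since $r^*+1/2<3/2$.

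Next, I would decompose $Eg = \tilde g + w$ where $w := Eg - \tilde g$. Because $T(Eg) = g = T\tilde g$, the corrector $w$ belongs to $H^1_0(\Om)$, and in view of \eqref{Eq: EgChar} we have
\[
\int_\Om \nabla w \cdot \nabla v \ = \ - \int_\Om \nabla \tilde g \cdot \nabla v \ = \ \langle \Delta \tilde g, v \rangle, \qquad v \in H^1_0(\Om),
\]
that is, $-\Delta w = \Delta \tilde g$ in $H^{-1}(\Om)$. Since $\tilde g \in H^{r^*+1}(\Om)$, the right-hand side satisfies $\Delta \tilde g \in H^{-1+r^*}(\Om)$ with $\|\Delta \tilde g\|_{H^{-1+r^*}(\Om)} \lesssim \|\tilde g\|_{H^{r^*+1}(\Om)}$. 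Applying Assumption~\ref{Assumption: R omega} with $\beta = r^* \leq \beta^*$ and $\zeta = \Delta \tilde g$ would then yield $w \in H^{1+r^*}(\Om)$ and
\[
\|w\|_{H^{1+r^*}(\Om)} \ \lesssim \ \|\Delta \tilde g\|_{H^{-1+r^*}(\Om)} \ \lesssim \ \|g\|_{H^{r^*+1/2}(\Gamma)}.
\]
Since $Eg = \tilde g + w$ with both $\tilde g$ and $w$ in $H^{r^*+1}(\Om)$, the triangle inequality gives \eqref{Eq: embedStable}.

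The hard part will be the construction of the lift $\tilde g$ in the range $1/2 \leq r^* < 1$, where $r^*+1 \geq 3/2$ lies on or above the classical trace-surjectivity threshold for general Lipschitz domains. The piecewise flat structure of $\Gamma$ is essential here: away from edges and vertices of $\Gamma$, the local flattenings are affine and therefore preserve fractional Sobolev regularity exactly, while near edges and vertices the non-degeneracy of $H^{r^*+1/2}(\Gamma)$ (owing to $r^*+1/2<3/2$) should permit handling the singular geometry through standard partition-of-unity arguments. With the lift in hand, the rest of the argument is purely a bookkeeping exercise built upon Assumption~\ref{Assumption: R omega}.
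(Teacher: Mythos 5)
Your overall skeleton is the same as the paper's: write $Eg$ as a bounded lift of $g$ into $H^{r^*+1}(\Om)$ plus an $H^1_0(\Om)$ corrector solving $-\Delta w=\Delta\tilde g$ with $\Delta \tilde g\in H^{-1+r^*}(\Om)$, and invoke Assumption~\ref{Assumption: R omega} (i.e.\ \eqref{Eq: zVApriori}) with $\beta=r^*$. That reduction is correct and is exactly how the paper argues for $\frac12<r^*<1$ (with $T^{-1}g$ in place of your $\tilde g$); for $0<r^*\le\frac12$ the paper simply cites the Lipschitz-domain result of \cite{jerison1995inhomogeneous}, and in that range your lift also exists by classical trace theory since $r^*+1<\frac32$.

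The genuine gap is the construction of the lift in the range $\frac12\le r^*<1$, which you yourself flag as ``the hard part'' but then dispatch with a sketch that does not work as stated. Your plan is a partition of unity, piecewise-linear local flattenings of $\Gamma$, and the half-space Poisson-kernel extension. Away from edges and vertices this is fine, but near an edge or vertex the local flattening is only piecewise affine, hence merely bi-Lipschitz, and bi-Lipschitz changes of variables preserve $H^\sigma$ only for $\sigma\le 1$; for $\sigma=r^*+\frac12>1$ flattening a dihedral corner destroys exactly the regularity you need, so ``standard partition-of-unity arguments'' do not close the argument. The real content here is the identification of the intrinsic space $H^{r^*+1/2}(\Gamma)$ with the trace space $H_T^{r^*+1/2}(\Gamma)$ on a polyhedral boundary: one must know that facewise $H^{r^*+1/2}$ regularity together with the compatibility (continuity) conditions across edges and vertices characterizes traces of $H^{r^*+1}(\Om)$ functions, and that continuity holds automatically for intrinsically $H^{r^*+1/2}$ data by Sobolev embedding when $\frac12<r^*<1$. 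The paper obtains this, together with a bounded right inverse of the trace operator into $H^{r^*+1}(\Om)$, from the results of \cite{BDM00, BDMPP07}; without an ingredient of this type (or an explicit face-by-face extension with edge corrections), your lift $\tilde g$ is not established, and the remainder of your proof, while correct, rests on it.
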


\begin{proof}
For $0<r^* \le \frac{1}{2}$ the desired result is stated for example in \cite{jerison1995inhomogeneous} for general Lipschitz domains.

For $\frac{1}{2}<r^*<1$ we instead employ properties of the trace operator on Lipschitz polygons and polyhedra.  Recall that $H_T^{r^*+1/2}(\Gamma)=T(H^{r^*+1}(\Omega))$ is the image of $H^{r^*+1}(\Omega)$ under the trace operator $T$ defined in \eqref{Eq: TraceDef}.  We wish to establish that $H_T^{r^*+1/2}(\Gamma)=H^{r^*+1/2}(\Gamma)$, where as throughout this work the latter space is defined as those functions with finite intrinsic norm given by \eqref{Eq: Hr}.  

The space $H_T^{r^*+1/2}(\Gamma)$ is characterized for general polygonal and polyhedral domains in \cite{BDM00,BDMPP07}, and a similar characterization is given for the more restricted case of triangles and tetrahedra in \cite{PS24, PS25}. There it is shown that $g \in H_T^{r^*+1/2}(\Gamma)$ if and only if $g \in H^{r^*+1/2}(\Gamma_j)$, $j=1,..,J$, and if in addition $g$ satisfies compatibility conditions at vertices of $\Gamma$ (if $n=2$ so $\Omega$ is a polygon) or at edges and vertices of $\Gamma$ (if $n=3$ so $\Omega$ is a polyhedron). 
Here $\{\Gamma_j\}_{j=1}^J$ are the boundary edges (when $n=2$) or boundary faces (when $n=3$) of $\Gamma$.
When $\frac{1}{2}<r^*<1$ these compatibility conditions reduce to requiring continuity of $g$ across vertices and edges of $\Gamma$ (for the case of polygons, see for example the case $n=0$ in \cite[Table 1]{BDM00}).  That is,
\begin{equation*}
H_T^{r^*+1/2}(\Gamma)=C(\Gamma) \cap \Pi_{j=1}^J H^{r^*+1/2}(\Gamma_j), ~~\frac{1}{2}<{r^*}<1.
\end{equation*}
If $g \in H^{r^*+1/2}(\Gamma)$ for $\frac{1}{2}<r^*<1$, then $g \in C(\Gamma)$ by Sobolev embedding, and in addition $g \in H^{r^*+1/2}(\Gamma_j)$, $j=1,...,J$.  Thus $H^{r^*+1/2}(\Gamma) \subset H_T^{r^*+1/2}(\Gamma)$. 

In order to prove the other inclusion, note first that continuous, facewise $H^1$ functions on $\Gamma$ are globally in $H^1$.  Because the $H^{r^*+1/2}(\Gamma)$ norm can be localized to faces (cf. \eqref{eq: global_localization}), we thus obtain $H_T^{r^*+1/2}(\Gamma) = H^{r^*+1/2}(\Gamma)$ with equivalence of norms also holding.  

Having established that $H_T^{r^*+1/2}(\Gamma) = H^{r^*+1/2}(\Gamma)$, we note that $T$ also has a bounded right inverse  $T^{-1}:H_T^{r^*+1/2}(\Gamma) \rightarrow H^{r^*+1}(\Omega)$ \cite[Corollary 5.8 and Corollary 6.10]{BDMPP07}.  Thus because $r^* \le \beta^*$ and $\Delta Eg=0$, we have for $g \in H^{r^*+1/2}(\Gamma)$ that
\begin{equation}
\begin{aligned}
    \|Eg\|_{H^{r^*+1}(\Omega)} &\le \|Eg-T^{-1} g\|_{H^{r^*+1}(\Omega)} + \|T^{-1}g\|_{H^{r^*+1}(\Omega)}
    \\ & \lesssim \|\Delta (Eg-T^{-1} g)\|_{H^{r^*-1}(\Omega)} + \|g\|_{H_T^{r^*+1/2}(\Gamma)}
    \\ & = \|\Delta T^{-1}g\|_{H^{r^*-1}(\Omega)} + \|g\|_{H_T^{r^*+1/2}(\Gamma)}
    \\ & \le \|T^{-1} g \|_{H^{r^*+1}(\Omega)} + \|g\|_{H_T^{r^*+1/2} (\Gamma)}
    \\ & \lesssim \|g\|_{H_T^{r^*+1/2} (\Gamma)} \lesssim \|g\|_{H^{r^*+1/2}(\Gamma)}.
    \end{aligned}
\end{equation}
This completes the proof.  
\end{proof}

We also need to measure the regularity of the functionals $\lambda_i:H^s(\Gamma) \rightarrow \mathbb{R}$ given by $\lambda_i(g) = Eg(x_i)$, $x_i \in \Omega$. As for the regularity pick-up, we state this as an assumption and then prove that the assumption holds for at least some parameters in particular cases.

\begin{assumption} \label{ass: lambda bounded}
There is $\mu \geq 0$ such that given $z \in \Omega$ with ${\rm dist}(z, \partial \Omega) \ge d$ and $g \in H^{1/2}(\Gamma)$, there holds
\begin{equation} \label{Eq: L2 boundary embedding}
|Eg(z)| \ \lesssim \ d^{-n/2-\tilde{\mu}} \|g\|_{H^{-\mu}(\Gamma)}
\end{equation}
where $\tilde{\mu} := \max(0, \mu-\frac{1}{2})$.  Consequently, if $\gamma_z(g)=(Eg)(z)$ denotes the bounded extension of $g \mapsto (Eg)(z)$ to $H^{\mu}(\Gamma)$, we have $\gamma_z \in H^\mu(\Gamma)$ and 
\begin{equation}\label{eq: gamma_reg}
\|\gamma_z\|_{H^\mu(\Gamma)} \ \lesssim \ d^{-n/2-\tilde{\mu}}.
\end{equation}
\end{assumption}

We first establish that this result holds for $\mu=0$ for any domain such that the shift estimate \eqref{Eq: zVApriori} holds for some $\beta>\frac{3}{2}$.  This class of domains includes any polygonal, polyhedral, convex, or smooth domain.

\begin{proposition}[$L_2$ embedding]\label{Prop: EmbeddingL^p}
    Assume that $\Omega$ is a Lipschitz domain such that Assumption~\ref{Assumption: R omega} holds for some $\beta^*>\frac{1}{2}$.  Then for all $g \in H^{1/2}(\Gamma)$ and $z\in \Omega$ with ${\rm dist}(z, \Gamma) \ge d$ we have
    \begin{equation}\label{Eq: embedStableL^p}
        |Eg(z)| \ \lesssim  \ d^{-n/2} \|g\|_{L_2(\Gamma)}.
    \end{equation}
    Here the hidden constant only depends on $\Om$.  
\end{proposition}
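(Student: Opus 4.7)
The plan is to combine the mean value property for harmonic functions with a duality argument driven by Assumption~\ref{Assumption: R omega}. First I would set $\rho_d := |B_d(z)|^{-1}\chi_{B_d(z)}$, which satisfies $\int_\Omega \rho_d = 1$, $\mathrm{supp}(\rho_d) \subset B_d(z) \subset \Omega$ (since $\mathrm{dist}(z,\partial\Omega) \geq d$), and $\|\rho_d\|_{L_2(\Omega)} \lesssim d^{-n/2}$. Because $Eg$ is harmonic and hence smooth on $B_d(z)$, the mean value property gives
\begin{equation*}
Eg(z) \ = \ \frac{1}{|B_d(z)|}\int_{B_d(z)} Eg(x)\, dx \ = \ \int_\Omega Eg\, \rho_d.
\end{equation*}

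Next I would introduce the auxiliary problem: let $w \in H^1_0(\Omega)$ be the weak solution to $-\Delta w = \rho_d$ in $\Omega$. Since $\beta^* > \tfrac{1}{2}$ by hypothesis, I fix some $\beta$ with $\tfrac{1}{2} < \beta \leq \min(\beta^*,1)$. Assumption~\ref{Assumption: R omega} together with the embedding $L_2(\Omega) \hookrightarrow H^{-1+\beta}(\Omega)$ then yields
\begin{equation*}
\|w\|_{H^{1+\beta}(\Omega)} \ \lesssim \ \|\rho_d\|_{H^{-1+\beta}(\Omega)} \ \lesssim \ \|\rho_d\|_{L_2(\Omega)} \ \lesssim \ d^{-n/2}.
\end{equation*}
Because $\beta > \tfrac{1}{2}$, the standard trace theorem on Lipschitz domains applied to $\nabla w \in H^\beta(\Omega;\R^n)$ furnishes $\partial_\nu w \in H^{\beta-1/2}(\Gamma) \hookrightarrow L_2(\Gamma)$, with $\|\partial_\nu w\|_{L_2(\Gamma)} \lesssim \|w\|_{H^{1+\beta}(\Omega)} \lesssim d^{-n/2}$.

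The final step is Green's identity. Using $w \in H^1_0(\Omega)$ together with the harmonicity characterization \eqref{Eq: EgChar} to kill the interior term $\int_\Omega \nabla Eg \cdot \nabla w = 0$, integration by parts leaves only the boundary contribution
\begin{equation*}
Eg(z) \ = \ \int_\Omega Eg\, \rho_d \ = \ -\int_\Omega Eg\, \Delta w \ = \ -\int_\Gamma g\, \partial_\nu w.
\end{equation*}
Cauchy--Schwarz on $\Gamma$ combined with the trace estimate above then yields $|Eg(z)| \leq \|g\|_{L_2(\Gamma)} \|\partial_\nu w\|_{L_2(\Gamma)} \lesssim d^{-n/2}\|g\|_{L_2(\Gamma)}$, which is \eqref{Eq: embedStableL^p}.

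The main technical obstacle is guaranteeing that $\partial_\nu w$ belongs to $L_2(\Gamma)$, which forces $w \in H^s(\Omega)$ for some $s > \tfrac{3}{2}$; this is precisely the point at which the hypothesis $\beta^* > \tfrac{1}{2}$ is consumed, and it cannot be relaxed within this duality scheme. Everything else is qualitatively robust: the mean value step uses only interior harmonicity, and the constants in the regularity estimate depend on $\Omega$ but not on $z$ or $d$, so the only $d$-dependence comes through $\|\rho_d\|_{L_2(\Omega)} \lesssim d^{-n/2}$.
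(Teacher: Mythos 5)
Your proof is correct and takes essentially the same route as the paper: a dual Poisson problem handled via Assumption~\ref{Assumption: R omega}, the trace bound $\|\partial_\nu w\|_{L_2(\Gamma)}\lesssim\|w\|_{H^{1+\beta}(\Omega)}$, harmonicity of $Eg$ together with $w\in H^1_0(\Omega)$ to kill the interior term, and the mean value property, with the $d^{-n/2}$ factor arising exactly as you say. The only (harmless) difference is ordering: the paper dualizes against $Eg$ itself to first establish the global stability $\|Eg\|_{L_2(\Omega)}\lesssim\|g\|_{L_2(\Gamma)}$ and then localizes by the mean value property, whereas you dualize directly against the normalized indicator of $B_d(z)$.
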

\begin{proof}
    Let $w \in H_0^1(\Omega)$ solve \eqref{Eq: elliptic} with right-hand side $Eg$. 
   Integrating by parts twice and $\Delta Eg =0$ yield
    \begin{equation}\label{Eq: funIBP}
        \|Eg\|_{L_2(\Omega)}^2 = \int_{\Om}Eg (-\Delta w) dx \ = \ \int_{\Om}\grad Eg \cdot \grad wdx - \int_{\Gamma}g\frac{\pa w}{\pa \nu}d\sig \ 
        = \ -\int_{\Gamma}g \frac{\pa w}{\pa \nu}d\sig \leq \ \|g\|_{L_2(\Gamma)}\Big|\Big|\frac{\pa w}{\pa \nu}\Big|\Big|_{L_2(\Gamma)},
    \end{equation}
    where $\frac{\pa w}{\pa \nu}$ stands for the outer normal derivative of $w$. Using that $\|\nu\|_{L_\infty(\Gamma)} = 1$, applying the continuity of the trace operator for $0<\eta  \le \max(\beta^*,1)-\frac{1}{2}$ 
    (cf. \cite[Theorem 1.5.1.2]{grisvard2011elliptic}) and then employing \eqref{Eq: zVApriori} gives
    \begin{equation}\label{Eq: embed}
        \Big\|\frac{\partial w}{\partial \nu}\Big\|_{L_2(\Gamma)} \le \| \nabla w \|_{L_2(\Gamma)} \le \|\nabla w\|_{H^\eta(\Gamma)}  \ \leq \ \|\grad w\|_{H^{\eta + 1/2}(\Om)} \ \leq \ \|w\|_{H^{\eta + 3/2}(\Om)} \ \lesssim \ \|w\|_{H^{1 + \max(\beta^*,1)}(\Om)},
    \end{equation}
    and so combining \eqref{Eq: zVApriori} and \eqref{Eq: embed} in \eqref{Eq: funIBP}, we obtain
    \begin{equation} \label{eq: EL2stable}
        \|Eg\|_{L_2(\Omega)} \ \lesssim \ \|g\|_{L_2(\Gamma)}.
    \end{equation}
    The desired result follows by combining the last inequality with the mean value property for harmonic functions:
    \begin{equation}
        |Eg(z)| = \frac{1}{|B_d(z)|} \int_{B_d(z)} Eg(x) dx \lesssim d^{-n/2} \|Eg\|_{L_2(\Omega)} \ \lesssim \ d^{-n/2} \|g\|_{L_2(\Gamma)},
    \end{equation}
    where $B_d(z)$ is the open ball centered at $z$ and of radius $d$.
    \end{proof}

  We next show that Assumption \ref{ass: lambda bounded} holds for arbitrarily large $\mu>0$ when $\Omega$ is smooth, at least when $\mu+\frac{1}{2}$ is an integer.

  \begin{proposition} \label{prop: gammasmooth} Assume that $\Gamma=\partial \Omega$ is of class $C^\infty$ and $g \in H^{1/2}(\Gamma)$.  Then for $\mu \ge \frac{1}{2}$ with $\mu+\frac{1}{2}$ an  integer and $z \in \Omega$ with ${\rm dist}(z, \Gamma) \ge d$, there holds
  \begin{equation*}
  |Eg(z)| \ \lesssim \ d^{-n/2 - \mu+ 1/2} \|g\|_{H^{-\mu}(\Gamma)}.
  \end{equation*}  
  \end{proposition}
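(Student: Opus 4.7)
The plan is to extend the $L_2$ duality argument used in the proof of Proposition~\ref{Prop: EmbeddingL^p} by replacing the constant-multiple-of-characteristic-function mean value weight by a smooth mollifier, and by iterating the shift estimate (available for all integer orders since $\Gamma$ is $C^\infty$, so $\beta^*=\infty$) to absorb higher derivatives onto the test function.

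Fix a nonnegative radial $\phi_0\in C_c^\infty(B_1(0))$ with $\int \phi_0=1$, and set $\phi(x):=d^{-n}\phi_0((x-z)/d)$, so that $\phi\in C_c^\infty(B_d(z))$, $\int_\Omega \phi=1$, and by a standard scaling argument
\begin{equation*}
\|\phi\|_{H^k(\Omega)}\ \lesssim\ d^{-n/2-k},\qquad k\in\{0,1,2,\dots\}.
\end{equation*}
Since $\mathrm{dist}(z,\partial\Omega)\ge d$, the ball $B_d(z)$ lies inside $\Omega$ and $Eg$ is harmonic there. The mean value property applied to $Eg$ combined with the radial symmetry of $\phi$ about $z$ gives
\begin{equation*}
Eg(z)\ =\ \int_{B_d(z)}\phi(x)\,Eg(x)\,dx\ =\ \int_\Omega \phi\, Eg.
\end{equation*}

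Next, let $w\in H^1_0(\Omega)$ solve $-\Delta w=\phi$ in $\Omega$ with $w=0$ on $\Gamma$. Because $\mu+\tfrac12$ is a positive integer, $k:=\mu-\tfrac12$ is a nonnegative integer and the classical shift estimate on the smooth domain $\Omega$ (i.e.\ Assumption~\ref{Assumption: R omega} with $\beta=\mu+\tfrac12$, valid since $\beta^*=\infty$) yields
\begin{equation*}
\|w\|_{H^{\mu+3/2}(\Omega)}\ \lesssim\ \|\phi\|_{H^{\mu-1/2}(\Omega)}\ \lesssim\ d^{-n/2-\mu+1/2}.
\end{equation*}
Two applications of Green's identity, together with $\Delta Eg=0$ and $w|_\Gamma=0$, produce
\begin{equation*}
\int_\Omega \phi\, Eg \ =\ -\int_\Omega (\Delta w)\,Eg \ =\ -\int_\Gamma \bigl(\partial_\nu w\bigr)\,g\,d\sigma,
\end{equation*}
so by the $H^\mu$--$H^{-\mu}$ duality on $\Gamma$,
\begin{equation*}
|Eg(z)|\ \le\ \|\partial_\nu w\|_{H^\mu(\Gamma)}\,\|g\|_{H^{-\mu}(\Gamma)}.
\end{equation*}

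The last step is the trace estimate: since $\Gamma$ is smooth, the normal trace maps $H^{\mu+3/2}(\Omega)\to H^\mu(\Gamma)$ continuously (as $\nabla w\in H^{\mu+1/2}(\Omega)$ and $\mu>-\tfrac12$), hence
\begin{equation*}
\|\partial_\nu w\|_{H^\mu(\Gamma)}\ \lesssim\ \|w\|_{H^{\mu+3/2}(\Omega)}\ \lesssim\ d^{-n/2-\mu+1/2},
\end{equation*}
which combined with the previous estimate delivers the asserted bound. The main obstacle is simply ensuring the shift estimate and the normal trace are used at a regularity order where they hold cleanly; restricting to $\mu+\tfrac12\in\mathbb N$ is exactly the condition that places $\mu-\tfrac12$ at an integer order where the classical elliptic shift theorem on smooth domains gives \eqref{Eq: zVApriori} without fractional complications, and the argument sketched above should otherwise be essentially routine.
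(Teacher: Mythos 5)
Your proof is correct, but it takes a genuinely different route from the paper's. The paper proves Proposition \ref{prop: gammasmooth} via the Poisson-kernel representation $Eg(z)=-\int_\Gamma g\,\partial_\nu G(\cdot,z)$, a cutoff function $\varphi$ vanishing on $B_{d/2}(z)$, and Krasovskii-type pointwise decay estimates $|D^\alpha\nabla_y G(y,z)|\lesssim|y-z|^{1-n-|\alpha|}$, integrating in polar coordinates to bound $\|\varphi\nabla G(\cdot,z)\|_{H^{\mu+1/2}(\Omega)}\lesssim d^{-n/2-\mu+1/2}$. You instead promote the duality argument of Proposition \ref{Prop: EmbeddingL^p} to higher order: replace the point evaluation by a radially mollified average (legitimate by the mean value property, since $B_d(z)\subset\Omega$ and $\phi$ is radial about $z$ with unit mass), solve the auxiliary Dirichlet problem $-\Delta w=\phi$, invoke the smooth-domain shift theorem at order $\beta=\mu+\tfrac12$, and conclude by the $H^\mu$--$H^{-\mu}$ pairing and the normal-trace bound $\|\partial_\nu w\|_{H^\mu(\Gamma)}\lesssim\|w\|_{H^{\mu+3/2}(\Omega)}$ (valid since $\nu$ is smooth and $\mu>0$); the scaling $\|\phi\|_{H^{\mu-1/2}}\lesssim d^{-n/2-\mu+1/2}$ then delivers exactly the stated power of $d$. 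One small remark: your ``two applications of Green's identity'' is really one integration by parts plus the weak harmonicity \eqref{Eq: EgChar} of $Eg$ (which is only a priori in $H^1(\Omega)$), precisely as the paper does in \eqref{Eq: funIBP}, so no extra regularity of $Eg$ up to the boundary is needed. Your approach buys self-containedness --- it avoids citing Green's-function kernel estimates, reuses the machinery already set up for the $\mu=0$ case, and, since the shift theorem on a smooth domain is not restricted to integer orders, would in principle remove the hypothesis that $\mu+\tfrac12$ be an integer; the paper's kernel-based computation makes the source of the $d$-power completely explicit and requires no auxiliary boundary value problem, and its integer restriction stems from the polar-coordinate seminorm computation rather than from the shift theorem.
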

\begin{proof}  Let $G$ be the Green's function for $\Omega$ and $\nu$ denotes the outward pointing normal to $\Omega$.  Then
\begin{equation}
Eg(z) \ = \ -\int_\Gamma g(y) \frac{\partial_y G(y,z)}{\partial \nu} dy \le \|g\|_{H^{-\mu}(\Gamma)} \left \|\frac{\partial G(\cdot, z)}{\partial \nu} \right \|_{H^\mu(\Gamma)}.
\end{equation}
Let now $\varphi$ be a cutoff function satisfying $\varphi|_{B_{d/2}(z)}\equiv 0$, $\varphi|_{\overline{\Omega} \setminus B_d(z)} \equiv 1$, and $|D^\alpha \varphi| \lesssim d^{-|\alpha|}$ for multi-indices $|\alpha|\ge 0$.  Using the smoothness of the normal $\nu$ to $\Gamma$ and standard trace theorems, we have
\begin{equation}\label{Eq: stTrace}
\left \|\frac{\partial G(\cdot, z)}{\partial \nu} \right \|_{H^\mu(\Gamma)} \lesssim \|\nabla G(\cdot, z)\|_{H^\mu(\Gamma)} \ = \ \|\varphi \nabla G(\cdot, z)\|_{H^\mu(\Gamma)} \lesssim \|\varphi \nabla G(\cdot, z)\|_{H^{\mu+1/2}(\Omega)}.  
\end{equation}
Standard estimates for the Green's function \cite{Kr69} give
\begin{equation} 
D_y^\alpha \nabla_y G(y,z) \ \lesssim \ |y-z|^{1-n-|\alpha|}, ~|\alpha| \ge 0.
\end{equation}
Translating to polar coordinates, we can compute 
\begin{equation}\label{Eq: translaPolarCoord}
|\nabla G(\cdot, z)|_{H^{\mu+1/2}(\Omega \setminus B_{d/2}(z))}^2 \ \lesssim \ \int_{d/2}^{{\rm diam}(\Omega)} (r^{1-n-\mu-1/2})^2 r^{n-1} dr \lesssim d^{-n-2\mu+1}.
\end{equation}
Employing Leibniz's Rule and $|D^\alpha \varphi| \lesssim d^{-|\alpha|}$ similarly yields
\begin{equation}\label{Eq: TraceLeibniz}
\|\varphi \nabla G(\cdot, z)\|_{H^{\mu+1/2}(\Omega)} \ \lesssim \ d^{-n/2-\mu+1/2}, 
\end{equation}
which yields the desired result when combined with the preceding equations. 
\end{proof}

\begin{remark}\label{Rmk: MuDiscussion}
We have established that Assumption 5.3 holds with $\mu=0$ for a broad class of domains (including Lipschitz polygonal and polyhedral domains) and that it holds for arbitrarily large $\mu$ when $\Omega$ is smooth.  A natural question is whether this assumption holds for some positive $\mu>0$ when $\Omega$ has intermediate smoothness.  One can quickly establish a positive answer if $\Omega$ is of class $C^{k,1}$ for some integer $k >0$, but when $\Omega$ is polygonal or polyhedral and thus merely of class $C^{0,1}$ the answer is more subtle.  Let for example $\Omega \subset \mathbb{R}^2$ be polygonal.  A critical step in the proof of Proposition \ref{Prop: EmbeddingL^p} is the bound \eqref{Eq: embed}.  In order to emulate this step for $\mu>0$ we would need to similarly bound $\left \| \frac{\partial z}{\partial \nu} \right \|_{H^{\mu}(\Omega)}$, but $\partial z/\partial \nu= \nabla z \cdot \nu$ is not generally in $H^\mu(\Gamma)$ when $\Omega$ is merely Lipschitz because $\nu$ is only in $[L_\infty(\Gamma)]^n$.  When $\Omega \subset \mathbb{R}^2$ is polygonal with edges $\{\Gamma_i\}_{i=1}^M$, Grisvard \cite[Theorem 1.2.1]{grisvard2011elliptic} has established that if $z \in W^{2, p}(\Omega)$ then $\frac{\partial z}{\partial \nu} \in W_p^{1-1/p}(\Gamma_j)$, $j=1,...,J$. The question of whether this additional piecewise regularity would ultimately allow us to systematically establish higher order convergence rates for our recovery algorithm on general Lipschitz polyhedral domains does not have a clear answer, especially when $s \neq 1$, and we do not pursue it further.  {If on the other hand $\Omega$ is convex and polygonal or polyhedral we may apply \cite[Theorem 2.1.65]{vexler2025numerical}, which states that a function lying in  $H^2(\Om) \cap H^1_0(\Om)$ has normal trace with global $H^{1/2}(\Gamma)$ regularity on any polygonal or polyhedral domain.  Since the dual solution $w$ in \eqref{Eq: funIBP}--\eqref{Eq: embed} possesses $H^2$ regularity on convex polygonal and polyhedral domains, employing this observation allows us to take $\mu = \frac{1}{2}$ in this case.}  
\end{remark}





We next consider regularity of solutions to (possibly fractional order) elliptic problems on the boundary $\Gamma$ of the form: For $\gamma \in H^{-s}(\Gamma)$, find $\kappa \in H^s(\Gamma)$ such that 
\begin{equation}\label{Eq: hsvargamma}
\lang \kappa, g\rang_{H^s(\Gamma)} \ = \ \gamma(g),\quad g\in H^s(\Gamma).
\end{equation}
The smoothness of the boundary $\Gamma$ and the functional $\gamma$ will generally both impact the smoothness of $\kappa$.  As noted in the predecessor work \cite{binev2024solving}, there seems to be relatively little literature quantifying expected regularity gains in the context of fractional-order problems.  
In the case of the classical Laplace-Beltrami problem ($s=1$) we may in contrast make some concrete statements.  

\begin{remark}[Regularity of solutions to the Laplace-Beltrami problem on polyhedral surfaces] \label{rem: lbregularity}
When $\Gamma$ is polygonal or polyhedral, we cannot expect global $H^\sigma$ regularity when $\sigma \ge \frac{3}{2}$ because these spaces are degenerate across vertices, but in many circumstances {\it facewise} $H^r$ regularity of $\kappa$ with $r \ge \frac{3}{2}$ does hold.  Such facewise regularity is sufficient to achieve higher-order approximation by finite element methods.  

When $n=2$ and thus $\Gamma$ is a piecewise affine closed Lipschitz curve, the weak solution $\kappa \in H^1(\Gamma)$ to \eqref{Eq: hsvargamma} is continuous by a Sobolev embedding.  Denote by $\{\Gamma_j\}_{j=1}^J$ the faces of $\Gamma$.  On each face $\kappa$ thus solves a classical two-point boundary value problem $-\kappa''+\kappa=f$ with right hand side data $f \in H^{-1+\delta}(\Gamma_j)$, $\delta \ge 0$,  {where $f$ is the restriction of the functional $\gamma$ in \eqref{Eq: hsvargamma} to $\Gamma_j$.} Thus $\kappa \in H^1(\Gamma)$ and $v|_{\Gamma_j} \in H^{1+\delta}(\Gamma_j)$, $j=1,...,J$.   

The situation is more complex when $n=3$.  From \cite[Theorem 8]{BCS02} there again holds that $\kappa \in H^1(\Gamma)$ with $\kappa$ possibly more regular on each face $\Gamma_i$.  However, now the facewise regularity depends on the geometry of $\Gamma$ around each vertex in addition to the smoothness of the right hand side.  Let $L$ be the maximum length of the perimeter of the spherical caps of radius 1 covering the vertices $v$ of $\Gamma$.   If $f \in H^{-1+\delta}(\Gamma)$, then for each face $\Gamma_j$ there holds $\kappa|_{\Gamma_j} \in H^\sigma(\Gamma_j)$ with $\sigma<{\rm min}(1+\delta, 1+\frac{2\pi}{L})$.  If $\Omega$ is convex then it is easy to see that $L < 2 \pi$ and thus $ \kappa|_{\Gamma_j} \in H^{1+\delta-\epsilon}(\Gamma_j)$ for any $\epsilon>0$ and $\delta \le 1$. When $\Omega$ is not convex, then regularity may be more restricted.  For example, the spherical cap for the central vertex of a classical two-brick domain has perimeter $L=3\pi$ (cf. \cite[pg. 19]{DG12} for an illustration).  Thus even if the right hand side data is infinitely smooth the facewise regularity of $\kappa$ is restricted to $H^{5/3-\epsilon}$.  A pathological example in \cite{BCS02} indicates that polyhedral domains $\Omega$ can be constructed with $L$ arbitrarily large, and thus the facewise regularity of $\kappa$ may be only $H^{1+\epsilon}$ with $\epsilon$ arbitrarily small.  
\end{remark}

With these considerations we make the following assumption where $\|\kappa\|_{\tilde{H}^\xi(\Gamma)}$ denotes the standard $H^\xi(\Gamma)$ norm if $\xi<\frac{3}{2}$, and $(\sum_{j=1}^J \|\kappa\|_{H^\xi(\Gamma_j)}^2)^{1/2}$ if $\xi \ge \frac{3}{2}$ with $\{ \Gamma_i\}_{i=1}^J$ denoting the faces (or edges when $n=2$) of $\Gamma$.

\begin{assumption}[Regularity pickup on $\Gamma$]\label{Assumption: R} For $\frac{1}{2} <s<\frac{3}{2}$ and $\delta>0$, there exists $0<R(s,\delta) \le 2-s$ such that
if $\gamma\in H^{-s+\delta}(\Gamma)$, 
then the solution $\kappa$ to \eqref{Eq: hsvargamma} belongs to ${\tilde{H}^{s+R(s,\delta)}(\Gamma)}$ with
\begin{equation}\label{Eq: stabetas}
\|\kappa\|_{\tilde{H}^{s+R(s,\delta)}(\Gamma)} \ \preceq \ \|\gamma\|_{H^{-s+\delta}(\Gamma)}.
\end{equation}
Moreover, there holds
\begin{equation}\label{Eq: approxetas}
\min_{{\chi} \in {\mathbb T}_h} \|\kappa-{\chi}\|_{H^s(\Gamma)} \ \preceq \ h^{R(s,\delta)}  \|\kappa\|_{\tilde{H}^{s+R(s,\delta)}(\Gamma)} \ \preceq \ h^{R(s,\delta)} \|\gamma\|_{H^{-s+\delta}(\Gamma)}.
\end{equation}
The constants hidden in the $\preceq$ symbol depends on $s$, $\delta$,  and the shape parameter $\sigma$.
\end{assumption}

Lax-Milgram theory guarantees that the solution $\kappa$ in \eqref{Eq: hsvargamma} is bounded in $H^s(\Gamma)$, with $\|\kappa\|_{H^s(\Gamma)}=\|\gamma\|_{H^{-s}(\Gamma)}$. Assumption~\ref{Assumption: R} provides regularity beyond $H^s(\Gamma)$ provided the data is smoother. {For smooth domains, it is natural to expect that $R(s, \de) = \min(\de, 2-s)$, although this statement does appear directly in the literature. Nevertheless, by reproducing the argument of Theorem~4.2 in~\cite{BP17}, one could show that the domain of $(-\Delta_\Gamma)^r$ embeds into the interpolation space $H^{2r}(\Gamma)$ and deduce the desired regularity estimates.} For polygonal / polyhedral Lipschitz domains, we expect $R(s,\de)>0$.  The restriction $R(s,\delta) \le 2-s$ reflects the inability of piecewise linear finite element spaces to take advantage of higher regularity.


An important step in our argument below is bounding norms of harmonic extensions of solutions $\kappa$ to elliptic problems on $\Gamma$.  As quantified in the following proposition, the global $H^\sigma$ regularity of $E \kappa$ on $\Omega$ is constrained both by the regularity of $\kappa$ and by elliptic regularity on $\Omega$.  

\begin{proposition}[Bounds for harmonic extensions of solutions to elliptic problems on $\Gamma$]  
Let $\beta_\Omega=\min(\beta^*,1)$ with $\beta^*$ as in Assumption~\ref{Assumption: R omega} and $R(s,\delta)$ as in Assumption~\ref{Assumption: R} for $\delta>0$. Consider $E\kappa$, where $\kappa$ solves \eqref{Eq: hsvargamma} with $\gamma \in H^{-s+\delta}(\Gamma)$ and let 
\begin{equation} \label{eq: rdef}
    r(\delta)\ :=\ \min \left (\beta_\Omega, s+R(s,\delta)-\frac{1}{2}, 1-\epsilon \right )
\end{equation}
for some fixed (small) $\epsilon>0$.  Then
\begin{equation} \label{eq: kappa_est}
\|E\kappa\|_{H^{r(\delta)+1}(\Omega)} \ \lesssim \|\gamma\|_{H^{-s+\delta}(\Gamma)}.
\end{equation}
\end{proposition}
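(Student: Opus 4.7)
The plan is to chain Proposition~\ref{Prop: EmbeddingRegularity} with Assumption~\ref{Assumption: R}: the former transfers boundary regularity of $\kappa$ to interior regularity of $E\kappa$, and the latter supplies the needed boundary regularity from $\gamma$.

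First, I apply Proposition~\ref{Prop: EmbeddingRegularity} with $r^* := r(\delta)$. The hypotheses $0 < r^* < 1$ and $r^* \le \beta^*$ follow at once from \eqref{eq: rdef}: the three minimands are individually positive (since $\beta_\Omega > 1/2$, $s > 1/2$ together with $R(s,\delta) \ge 0$ yields $s + R(s,\delta) - 1/2 > 0$, and we may take $\epsilon < 1$), while $r(\delta) \le 1 - \epsilon < 1$ and $r(\delta) \le \beta_\Omega \le \beta^*$ are built into the definition. This gives
\begin{equation*}
\|E\kappa\|_{H^{r(\delta)+1}(\Omega)} \lesssim \|\kappa\|_{H^{r(\delta)+1/2}(\Gamma)},
\end{equation*}
so the problem reduces to controlling $\|\kappa\|_{H^{r(\delta)+1/2}(\Gamma)}$ by $\|\gamma\|_{H^{-s+\delta}(\Gamma)}$.

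Second, I establish the continuous embedding $\tilde{H}^{s+R(s,\delta)}(\Gamma) \hookrightarrow H^{r(\delta)+1/2}(\Gamma)$. Since $r(\delta) + 1/2 \le 3/2 - \epsilon$, the target is an intrinsic classical Sobolev space on the polyhedral surface, and by the very definition of $r(\delta)$ the source exponent $s + R(s,\delta)$ dominates the target exponent $r(\delta) + 1/2$. When $s + R(s,\delta) < 3/2$, the broken and classical spaces coincide and standard Sobolev embedding closes the argument. When $s + R(s,\delta) \ge 3/2$, I invoke the characterization $H^{r+1/2}(\Gamma) = C(\Gamma) \cap \prod_{j} H^{r+1/2}(\Gamma_j)$ for $1/2 < r < 1$ already used in the proof of Proposition~\ref{Prop: EmbeddingRegularity}: the facewise regularity is furnished directly by the broken norm, while global continuity across edges and vertices is inherited from $\kappa \in H^s(\Gamma)$ with $s > 1/2$, which Lax--Milgram produces from \eqref{Eq: hsvargamma}.

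Third, I close the estimate by combining \eqref{Eq: stabetas} with the elementary Lax--Milgram bound $\|\kappa\|_{H^s(\Gamma)} \le \|\gamma\|_{H^{-s}(\Gamma)} \lesssim \|\gamma\|_{H^{-s+\delta}(\Gamma)}$, which controls both the facewise part of the broken norm and the lower-order global norm produced in the previous step. The hardest branch is the case $s + R(s,\delta) \ge 3/2$ in the second step, where the broken space does not coincide with any global classical Sobolev space; the essential observation that makes the argument go through is that no compatibility beyond continuity is needed when the target exponent stays strictly below $3/2$, and that continuity is delivered for free by the lower-order global regularity.
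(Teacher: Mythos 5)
Your proof is correct and follows essentially the same route as the paper, whose entire argument is precisely the application of Proposition~\ref{Prop: EmbeddingRegularity} with $r^*=r(\delta)$ combined with \eqref{Eq: stabetas}. The extra care you take in passing from the broken norm $\|\cdot\|_{\tilde{H}^{s+R(s,\delta)}(\Gamma)}$ to the intrinsic norm $\|\cdot\|_{H^{r(\delta)+1/2}(\Gamma)}$ when $s+R(s,\delta)\ge \frac{3}{2}$ (using facewise regularity plus the fact that $s>\frac{1}{2}$ excludes jumps across edges) fills in a detail the paper's one-line proof leaves implicit, and is sound.
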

\begin{proof}
    This result directly follows from Proposition~\ref{Prop: EmbeddingRegularity} with $r^*=r(\delta)$ and \eqref{Eq: stabetas}.
\end{proof}    

Finally, we state assumptions concerning the regularity of the solution $u_0$ to $-\Delta u_0=f$ with $u_0=0$ on $\Gamma$.  We have already noted that in order to make sense of interior point measurements we require $u_0 \in C(\Omega)$ (and thus obtain $u \in C(\Omega)$ according to \eqref{eq:decomp}).  We thus require that $f \in H^{{-1+\beta_\Omega}}(\Omega)$ for $\beta_\Omega > \max(0,n/2-1)$.


Beyond ensuring that $u_0 \in C(\Omega)$, we also need assumptions that {guarantee} the convergence rates of finite element approximations of $u_0$ in order to ensure that  \eqref{Eq: NROAlgEq0B} in the recovery algorithm holds, namely
$$
|u_{h0} - u_0|_{X} +\max_{i=1,...,m} |u_{h0}(x_i)-u_0(x_i)| \ \leq \ \ep_1.
$$
The types of regularity needed and thus the assumptions we make here depend on the choice of  $(X,|.|_X)$ used to measure the recovery error as well as the need to guarantee that $\max_{i=1,...,m} |u_{0h}(x_i)-u_0(x_i)|$ is small.  We assume that Assumption~\ref{Assumption: R omega} is satisfied for some $\beta^*$ and recall that $\beta_\Omega = \min(1,\beta^*)$.

For the approximation of $u_0$ at the measurement locations, recall that $x_i \in \Omega_d$ and local (interior) pointwise error estimates \cite{schatz1995interior} yield  for any $z \in \overline{\Omega}_d:= \{ x \in \Omega \ | \ {\rm dist}(x, \partial \Omega) \geq d\}$ that
\begin{equation} \label{Eq: u0 interior}
|(u_0-u_{h0})(z)| \ \lesssim \ \ln \left(\frac{d}{2h}\right) \inf_{\chi \in \mathbb{V}_h^0} \|u_0-\chi\|_{L_\infty (B_{d/2}(z))} + d^{-n/2} \|u_0-u_{h0}\|_{L_2(\Omega)}.
\end{equation}
Here we have assumed that $d \ge c_0 h$ with $c_0$ sufficiently large.  A standard duality argument yields for $0<\beta_f\leq \beta_\Omega$ and $f \in H^{-1+\beta_f}(\Omega)$
\begin{equation} \label{Eq: u0 L2}
\|u_0-u_{h0}\|_{L_2(\Omega)} \ \lesssim \ h^{\beta_\Omega+\be_f} \|f\|_{H^{-1+\be_f}(\Omega)}.
\end{equation}
When $u_0$ is sufficiently smooth, standard finite element approximation theory yields
\begin{equation*} 
\inf_{\chi \in \mathbb{V}_h^0} \|u_0-\chi\|_{L_\infty (B_{d/2}(z))} \ \lesssim \ h^{\tau_d} |u|_{C^{\ell, {\tau_d - \ell} }(\Omega_d/4)},
\end{equation*} 
where $\Omega_{d/4} = \{x \in \Omega: {\rm dist}(x, \partial \Omega)\}> d/4$.  Here $0< \tau_d \le 2$, {$\ell = 0$ when $0 < \tau_d \le 1$, and $\ell=1$ when $1< \tau_d \le 2$}.  The range of $\tau_d$ values allowed depends on interior H\"older regularity of $u_0$, which in turn depends only on the regularity of $f$.  Such regularity may for example be obtained for $0<\tau_d<1$ by assuming $f \in L_q(\Omega)$ for appropriate $q>\frac{n}{2}$ \cite[Theorem 8.24]{gilbarg1977elliptic}, for $1 < \tau_d <2$ by assuming $f \in L_{\infty}(\Omega)$ \cite[Section 8.11]{gilbarg1977elliptic}, and for $\tau_d=2$ by assuming $f \in C^{0,\alpha}(\Omega)$ for any $\alpha>0$ \cite[Chapter 4]{gilbarg1977elliptic}.  Such results could alternatively be obtained by employing interior $W_p^\sigma$ regularity results and then using Sobolev embeddings to obtain appropriate H\"older continuity of $u_0$.  Under these assumptions, we finally obtain
\begin{equation} \label{Eq: u0 point approx}
|(u_0-u_{h0})(z)| \ \le \ 
\left(h^{\tau_d} \ln \left(\frac{d}{2h}\right) + d^{-n/2} h^{\beta_\Omega+\be_f}\right) C_f,
\end{equation}
where $C_f$ depends on $f$.

We now address the approximation of $u_0$ in $(X,|.|_X)$ for the three cases considered.

{\bf Case 1:} $X=H^1(\Omega)$ and $|.|_X=\|.\|_{H^1(\Omega)}$. Standard finite element theory and \eqref{Eq: zVApriori} guarantee that for $\max(-1,n/2-2) < \beta_f \leq \beta_\Omega$ and $f \in H^{\beta_f}(\Omega)$, we have
\begin{equation}\label{Eq:u0H1}
\|u_0-u_{h0}\|_{H^1(\Omega)} \ \lesssim \ h^{\be_f} \|f\|_{H^{-1+\be_f}(\Omega)}.
\end{equation}

{\bf Case 2:} $X=L_\infty(\Omega)$ and $|.|_X=\|.\|_{L_\infty(\Omega)}$.   For any polygonal domain when $n=2$ \cite{Sch80} and for convex polyhedral domains when $n=3$ \cite{LV16}, it is known that
\begin{equation*}
    \|u_0-u_{h0}\|_{L_\infty(\Omega)} \ \lesssim \ \ln \left(\frac{1}{h}\right) \inf_{\chi \in \mathbb{V}_h^0} \|u-\chi\|_{L_\infty(\Omega)}. 
\end{equation*}
We are unaware of similar results in the literature for nonconvex polyhedral domains in three space dimensions.  Accordingly, we assume that there exists $0<\tau_\Omega \le 2$ such that
\begin{equation} \label{Eq: linf global}
\|u_0-u_{h0}\|_{L_\infty(\Omega)} \ \le \ C_{f, \Omega}\ln \left(\frac{1}{h}\right) h^{\tau_\Omega}. 
\end{equation}
The allowed range of values of $\tau_\Omega$ will depend on the regularity of $f$ as in Case 2, plus the geometry of $\Omega$, which when considering polygonal domains in two space dimensions corresponds to the maximum interior vertex opening angle of $\Omega$ (cf. \cite{SW78}).  

{\bf Case 3:} $X=\{ v\in L_2(\Omega) \ | \ v|_{\Omega_d}\in L_\infty(\Omega_d)\}$ and $|.|_X=\|.\|_{L_\infty(\Omega_d)}$. The approximation of $u_0$ by $u_{h0}$ already follows from \eqref{Eq: u0 point approx}.

\section{Error estimates on Riesz representers}\label{Sec: Mainresult}








Before stating and proving our main results, we make some comments. For sake of notational simplicity we  suppress the dependence of the Riesz representers on $i$, letting $\psi \in H^s(\Gamma)$ satisfy 
\begin{equation} \label{Eq: psi}
    \lang \psi, g\rang_{H^s(\Gamma)} \ = \ \nu(Eg) = Eg(\overline x), \quad g\in H^s(\Gamma)
    \end{equation}
for some $\overline x\in \{x_1,...,x_m\}$.  Also, $\phi = E \psi$.  Analogously, we let $\phi_h \in \mathbb{V}_h$ be the finite element approximation of $\phi$; that is, $\phi_h = E_h\psi_h$, where $\psi_h \in \mathbb{T}_h$ is the solution to
\begin{equation}\label{Eq: psiH}
    \lang \psi_h, g_h\rang_{H^s(\Gamma)} \ = \  E_hg_h(\overline x), \quad g_h \in \mathbb{T}_h.
\end{equation}



\subsection{Pointwise error estimates}\label{Subsec: pointwiseError}

Now we may state the first major theoretical result of our paper, which is a variant of \cite[Theorem 4.7]{binev2024solving}. That paper established both $H^1$ and (global) $L_{\infty}$-error estimates for the Riesz representers in the case of pointwise evaluations. Here we refine the latter estimate to make explicit the dependence of the error on the distances between the measurement locations and $\Gamma$.





\begin{theorem}\label{Thm: theorateinf}
Assume that $\frac 1 2 < s < \frac 32$. Let $\beta^*$ be as in Assumption~\ref{Assumption: R omega} and set $\beta_\Omega=\min(\beta^*,1)$. 
Let also $\mu, \tilde{\mu} \ge 0$ be as in Assumption \ref{ass: lambda bounded}, $R(.,.)$ as in Assumption \ref{Assumption: R}, and $r(.)$ be given by \eqref{eq: rdef}. 
 Let $d \geq c_0 h$ with $c_0>0$ sufficiently large. For $z \in \Om$ with ${\rm dist}(z, \Gamma) \ge d$, if $t_p := \min\{2R(s, s+\mu), r(s+\mu) + \beta_\Omega \}$, there holds
\begin{equation}
\label{Eq: convphiboundinf}
 |(\phi - \phi_h)(z)| \ \lesssim \ h^{t_p}.
\end{equation}
The implicit constant depends on $s$, the geometry of $\Omega$ (in particular through the constant $c_0$), $d$, and the elliptic regularity parameter $\beta_\Omega$. More explicitly, 
\begin{equation}\label{Eq: Claim14DiscReprise}
    |(\phi-\phi_h)(z)| \ \lesssim \ d^{-n-2\tilde{\mu}} h^{2R(s, s+\mu)} +  d^{-n-\tilde{\mu}} h^{r(s+\mu) + \beta_\Omega}.
\end{equation}
\end{theorem}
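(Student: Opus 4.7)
The plan is a two-level duality argument. To decouple the fractional boundary problem defining $\psi$ from the Dirichlet discretization on $\Omega$, I would introduce the auxiliary function $\widetilde\psi_h \in \mathbb T_h$ given by the \emph{continuous}-extension Galerkin equation
\begin{equation*}
\langle \widetilde\psi_h, g_h\rangle_{H^s(\Gamma)} \ = \ E g_h(\overline x), \qquad g_h \in \mathbb T_h,
\end{equation*}
which satisfies Galerkin orthogonality $\langle \psi - \widetilde\psi_h, g_h\rangle_{H^s(\Gamma)} = 0$ for all $g_h \in \mathbb T_h$. I then split
\begin{equation*}
(\phi - \phi_h)(z) \ = \ E(\psi - \widetilde\psi_h)(z) \ + \ (E - E_h)\widetilde\psi_h(z) \ + \ E_h(\widetilde\psi_h - \psi_h)(z),
\end{equation*}
and address the three summands separately. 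The first summand produces the contribution $d^{-n-2\tilde\mu}h^{2R(s,s+\mu)}$, while the other two combine to give $d^{-n-\tilde\mu}h^{r(s+\mu)+\beta_\Omega}$.

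For the first summand, Assumption~\ref{ass: lambda bounded} yields $|E(\psi-\widetilde\psi_h)(z)|\lesssim d^{-n/2-\tilde\mu}\|\psi-\widetilde\psi_h\|_{H^{-\mu}(\Gamma)}$, and I would control the Sobolev norm by an Aubin--Nitsche argument on $\Gamma$. For $\eta \in H^\mu(\Gamma)$, let $\chi \in H^s(\Gamma)$ solve $\langle \chi, g\rangle_{H^s(\Gamma)} = \langle \eta, g\rangle$ for all $g \in H^s(\Gamma)$. Applying Assumption~\ref{Assumption: R} with $\delta = s+\mu$ to both $\chi$ and to $\psi$ itself (whose datum $\gamma_{\overline x}$ satisfies $\|\gamma_{\overline x}\|_{H^\mu(\Gamma)} \lesssim d^{-n/2-\tilde\mu}$ by~\eqref{eq: gamma_reg}) produces, for some $\chi_h\in\mathbb T_h$,
\begin{equation*}
\|\chi-\chi_h\|_{H^s(\Gamma)} \ \lesssim \ h^{R(s,s+\mu)}\|\eta\|_{H^\mu(\Gamma)}, \qquad \|\psi-\widetilde\psi_h\|_{H^s(\Gamma)} \ \lesssim \ h^{R(s,s+\mu)}d^{-n/2-\tilde\mu}.
\end{equation*}
Galerkin orthogonality then gives $\langle \psi-\widetilde\psi_h, \eta\rangle = \langle \psi-\widetilde\psi_h, \chi-\chi_h\rangle_{H^s(\Gamma)}$, whence Cauchy--Schwarz and the supremum over $\eta$ yield $\|\psi-\widetilde\psi_h\|_{H^{-\mu}(\Gamma)}\lesssim h^{2R(s,s+\mu)}d^{-n/2-\tilde\mu}$.

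For the second summand $(E-E_h)\widetilde\psi_h(z)$, I would treat it as a Dirichlet finite-element error evaluated at an interior point. Its $H^1$-energy is $\lesssim h^{r(s+\mu)}d^{-n/2-\tilde\mu}$ by C\'ea's lemma, the embedding Proposition~\ref{Prop: EmbeddingRegularity} applied with $r^*=r(s+\mu)$, and Galerkin stability of $\widetilde\psi_h$ in $H^{1/2+r(s+\mu)}(\Gamma)$ against $\psi$. To convert this energy bound to the pointwise value at $z \in \Omega_d$, I would carry out an Aubin--Nitsche duality with a regularized point-mass $\delta_z^{\mathrm{reg}}$ supported in a ball of radius $\sim d$ around $z$; the adjoint problem has $H^{1+\beta_\Omega}$ regularity via Assumption~\ref{Assumption: R omega}, contributing an additional factor $h^{\beta_\Omega}d^{-n/2}$, for a total of $d^{-n-\tilde\mu}h^{r(s+\mu)+\beta_\Omega}$. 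The third summand $E_h(\widetilde\psi_h-\psi_h)(z)$ is handled analogously by writing $E_h = E - (E-E_h)$, applying Assumption~\ref{ass: lambda bounded} and the same interior Dirichlet duality; the bootstrap rests on the identity $\langle \widetilde\psi_h-\psi_h, g_h\rangle_{H^s(\Gamma)} = (E-E_h)g_h(\overline x)$ for $g_h \in \mathbb T_h$, which recycles the Dirichlet estimate just obtained (now at $\overline x$ instead of $z$) to control $\|\widetilde\psi_h-\psi_h\|_{H^{-\mu}(\Gamma)}$.

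The main obstacle will be the second summand: a naive Schatz--Wahlbin interior estimate using mean-value bounds on derivatives of the harmonic function $E\widetilde\psi_h$ leaks additional negative powers of $d$ through the local best-approximation term, so the pointwise conversion must instead go through the regularized Green's function in order that the factor $d^{-n/2}$ enters only once. The hypothesis $d \ge c_0 h$ with $c_0$ sufficiently large is critical at this step both to absorb logarithmic factors and to guarantee that $\delta_z^{\mathrm{reg}}$ is resolved on the mesh; it also plays a role in the third-summand bootstrap, which is delicate but non-circular because each application trades one occurrence of $(E-E_h)g_h$ for the already established interior Dirichlet estimate.
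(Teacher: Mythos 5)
Your overall architecture is essentially the paper's: your $\widetilde\psi_h$ is the Galerkin approximation $\bar\psi_h$ used there, your first summand is handled by exactly the same boundary Aubin--Nitsche argument, and your second/third summands rearrange the paper's splitting $E(\psi-\psi_h)(z)+(E-E_h)\psi_h(z)$ together with the identity $\langle\widetilde\psi_h-\psi_h,g_h\rangle_{H^s(\Gamma)}=(E-E_h)g_h(\overline x)$ and the $H^{r(s+\mu)+\frac12}(\Gamma)$ stability bounds. The difficulty is that every branch of your argument ultimately rests on the interior pointwise estimate $|(E-E_h)g_h(z)|\lesssim d^{-n/2}h^{r+\beta_\Omega}\|g_h\|_{H^{r+\frac12}(\Gamma)}$ (the paper's \eqref{Eq: Claim10Disc}), and your proposed mechanism for it does not close. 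A global duality against a regularized point mass $\delta_z^{\rm reg}$ supported in a ball of radius $\sim d$ controls only the local average $(e,\delta_z^{\rm reg})$ of the error $e=(E-E_h)g_h$; the consistency term $e(z)-(e,\delta_z^{\rm reg})$ is left over, and it carries no power of $h$: the harmonic part $Eg_h$ satisfies the mean value property, so this term equals (up to the choice of weight) $\fint E_hg_h-(E_hg_h)(z)$, i.e.\ the local oscillation of the \emph{discrete} harmonic extension, which is of order one unless you already have a local maximum-norm error estimate --- precisely the quantity you are trying to bound. Shrinking the support of $\delta_z^{\rm reg}$ to a single element removes the consistency term but inflates $\|\delta_z^{\rm reg}\|_{L_2}$ to $h^{-n/2}$ and destroys the rate. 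So the pointwise conversion cannot be done by one global duality step; some genuine interior (Schatz--Wahlbin type) analysis is unavoidable.

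Moreover, the obstacle you cite as the reason for avoiding Schatz--Wahlbin is not real. In the paper's proof the interior estimate \eqref{Eq: Claim6Disc} produces the local best-approximation term, which via the mean-value derivative bounds for the harmonic function $Eg_h$ contributes $d^{-1-n/2+r}h^2\ln(d/2h)$ (see \eqref{Eq: Claim9Disc}); the apparent extra negative power of $d$ is absorbed because $h/d\le 1/c_0$ and $r<1$, $\beta_\Omega\le 1$, giving exactly $d^{-n/2}h^{r+\beta_\Omega}$, so $d^{-n/2}$ indeed enters only once through the $L_2$ term. In other words, the route you dismiss is the one that works, while your replacement has the gap above. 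A secondary point: in your third summand, bounding $\|\widetilde\psi_h-\psi_h\|_{H^{-\mu}(\Gamma)}$ crudely by $\|\widetilde\psi_h-\psi_h\|_{H^s(\Gamma)}\lesssim d^{-n/2}h^{\beta_\Omega+s-\frac12}$ is not sufficient (for $s=1$, $R=1$, $\beta_\Omega=1$ this gives $h^{3/2}$ versus the claimed $t_p=2-\epsilon$); you need a second duality pass in which the term $\langle\widetilde\psi_h-\psi_h,\chi_h\rangle_{H^s(\Gamma)}=(E-E_h)\chi_h(\overline x)$ is again estimated by the interior pointwise bound, as the paper does with $\kappa_h$.
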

\begin{remark}[Expected orders of convergence] Before proving Theorem \ref{Thm: theorateinf} we explain how the parameters $R(s,s+\mu)$, $\beta_\Omega$, and $r(s+\mu)$ may interact in the concrete case $s=1$ corresponding to the classical Laplace-Beltrami operator on $\Gamma$.  When $\Omega$ is convex, we have $\beta_\Omega=1$. Taking $\mu=0$, per Remark \ref{rem: lbregularity} we can take $R(s,s+\mu)=R(1,1)=1-\frac{\epsilon}{2}$ for any $\epsilon>0$. Finally, we have $r(s+\mu) =\min(\beta_\Omega, s+R(s,s+\mu)-\frac{1}{2}, 1-\epsilon)=\min(1,\frac{3}{2}-\frac{\epsilon}{2}, 1-\epsilon)=1-{\epsilon}$ for any $\epsilon>0$.  Thus our overall convergence rate is $d^{-n}h^{2R(s,s+\mu)}+h^{r(s+\mu)+\beta_\Omega}=d^{-n}h^{2-\epsilon}$ for any $\epsilon>0$.  When $\Omega$ is non-convex $\beta_\Omega<1$.  Per the discussion above there still always holds $R(1,1)=1$ when $n=2$. When $n=3$, the value of $R(1,1)$ varies with the geometry of $\Gamma$ and is typically less than but not arbitrarily close to 1.  The expected convergence rate is then at most $d^{-n}h^{2\beta_\Omega}$.  

Much less information relevant to predicting expected rates of convergence is available in the literature when $s \neq 1$.  It is however the case that $R(s,s+\mu) \le 2-s$, so the predicted convergence rate when $s>1$ is at most $2(2-s)<2$.  When $s<1$, the convergence rate is still limited by 2, but the effect of the term $R(s,s+\mu)$ is less clear as we are not aware of references establishing regularity of fractional order problems on polyhedral surfaces.  


\end{remark}

\begin{proof}

Let $0<r<1$ with $r \le \beta_\Omega$, and let $g_h \in \mathbb{T}_h \subset H^{r+\frac{1}{2}}(\Gamma)$ be arbitrary.

\textbf{Step 1:} We first prove that
\begin{equation}\label{Eq: Claim3ADisc}
    \|(E - E_h)g_h\|_{L_2(\Om)} \ \lesssim \ h^{r + \beta_\Omega}\|g_h\|_{H^{r+\frac{1}{2}}(\Gamma)}.
\end{equation}
Using C\'ea's Lemma, $(P_h Eg_h)|_\Gamma=g_h$ according to \eqref{Eq: szproj}, standard approximation properties, and \eqref{Eq: embedStable} yields
\begin{equation}\label{Eq: Claim2Disc1}
\begin{aligned}
    \|\nabla(E - E_h)g_h\|_{L_2(\Om)} & \lesssim \ \min_{v_h \in \mathbb{V}_h, v_h|_\Gamma=g_h}\|Eg_h - v_h\|_{H^1(\Om)} \le \|Eg_h-P_h Eg_h\|_{H^1(\Omega)} \\ & \lesssim h^r\|E g_h \|_{H^{r+1}(\Om)} \lesssim h^r \|g_h\|_{H^{r+\frac{1}{2}}(\Gamma)}.
    \end{aligned}
\end{equation}
 We may then use \eqref{Eq: Claim2Disc1} and a standard duality argument involving \eqref{Eq: elliptic} to prove  \eqref{Eq: Claim3ADisc}. 

\textbf{Step 2:} Let $x \in \Omega$ be fixed and let $\rho > 0$ be such that $B_{\rho}(x)\subset \Omega$. We prove that for any harmonic function $v: \Om \rightarrow \R$, and multiindex $\alpha$ with $|\alpha|=2$, and $r$ as in Step 1 that
\begin{equation}\label{Eq: Claim5Disc}
     |D^{\alpha}v(x)| \ \lesssim \ \rho^{-2 - n/2 + r + 1}\|v\|_{H^{r + 1}(B_{\rho}(x))}.
\end{equation}
To this end, we observe by standard estimates for harmonic functions that
\begin{equation}\label{Eq: Claim4Disc}
    |D^{\alpha}v(x)| \ \lesssim \ \rho^{-n - 2}\|v\|_{L^1(B_{\rho}(x))},
\end{equation}
for each multi-index $|\al| = 2$; see Theorem 7 in Chapter 2 of \cite{evans2022partial}. We then apply a standard Bramble-Hilbert argument on the space of continuous piecewise linear functions with \eqref{Eq: Claim4Disc} to obtain \eqref{Eq: Claim5Disc}.

\textbf{Step 3:} Now we prove the following pointwise estimate:  Given $r$ with $0<r<1$ and $r \le \beta_\Omega$, there holds
\begin{equation}\label{Eq: Claim10Disc}
    |(E-E_h)g_h(z)| \ \lesssim \  d^{-n/2}h^{r +\be_{\Om}} \|g_h\|_{H^{r+\frac{1}{2}}(\Gamma)}.
\end{equation}
To start, we apply \cite[Theorem 1.1]{schatz1995interior} with $B := B_{d/2}(z)$, $q := 2$, $\Om_0 := B_{d/2}(z)$, $\Om_d, \mathcal{D} := \Om$, $\overline{r} := 1$, $s := 0$, and $N := n$.
Note that equation (1.2) in that paper is satisfied due to Galerkin orthogonality, so we obtain 
\begin{equation}\label{Eq: Claim6Disc}
    |(E-E_h)g_h(z)| \ \lesssim \ \ln\left(\frac{d}{2h}\right) \min_{\chi \in \mathbb V_h, \chi|_\Gamma= g_h} \| Eg_h - \chi \|_{L_{\infty}(B_{d/2}(z))} + d^{-n/2}\| (E-E_h)g_h \|_{L_2(B_{d/2}(z))}.
\end{equation}
Recall that by assumption $d \geq c_0 h$ for $c_0$ sufficiently large. In fact we first require that $c_0 \geq 1$ is such that $B_{d/2+h}(z) \subset \Omega$.
Next, by standard error estimates
\begin{equation}\label{Eq: Claim7Disc}
   \min_{\chi \in \mathbb V_h, \chi|_\Gamma = g_h} \| Eg_h - \chi \|_{L_{\infty}(B_{d/2}(z))} \ \lesssim \ h^2 \| Eg_h\|_{W^2_\infty(B_{d/2+h}(z))}.
\end{equation}
Now, let $x \in B_{d/2+h}(z)$. Applying  \eqref{Eq: Claim5Disc} to $Eg_h$ with $\rho := d/c_0$, for any multi-index $|\al| = 2$,  gives us
\begin{equation}\label{Eq: Claim8Disc1}
    |D^{\al}(Eg_h)(x)| \ \lesssim \ d^{-2 - n/2 + r + 1}\|Eg_h\|_{H^{r + 1}(B_{d/c_0}(x))}.
\end{equation}
 Taking the supremum over all possible multi-indices $\al$ and points $x \in B_{d/2 + h}(z)$ and restricting $c_0$ further so that $B_{d/2+d/c_0+h}(z) \subset \Omega$ gives
\begin{equation}\label{Eq: Claim8Disc2}
\begin{aligned}
    h^2\|Eg_h\|_{W^2_{\infty}(B_{d/2+h}(z))} &  \lesssim \ h^2d^{-2 - n/2 + r + 1}\|Eg_h\|_{H^{r + 1}(B_{d/2 + d/c_0+h}(z))} 
    \\ & \leq \ h^2d^{-2 - n/2 + r + 1}\|Eg_h\|_{H^{r + 1}(\Om)}.
    \end{aligned}
\end{equation}

Combining \eqref{Eq: embedStable}, \eqref{Eq: Claim7Disc} and \eqref{Eq: Claim8Disc2} yields
\begin{equation}\label{Eq: Claim9Disc}
   \min_{\chi_h \in \mathbb V_h} \| Eg_h - \chi_h \|_{L_{\infty}(B)} \ \lesssim \ d^{-2-n/2+r + 1} h^2 \| Eg_h\|_{H^{r + 1}(\Omega)} \ \lesssim \ d^{-1-n/2+r}h^2\|g_h\|_{H^{r+\frac{1}{2}}(\Gamma)}.
\end{equation}

Since $1 - r > 0$ and $1-\beta_\Omega \ge 0$, we note that 
$$
d^{-1-n/2+r} h^2 \ln\left(\frac{d}{2h}\right)= (d^{-n/2} h^{r +\beta_\Omega})h^{1-\beta_\Omega} \left ( \frac{h}{d} \right )^{1-r} \ln\left(\frac{d}{2h}\right) \lesssim d^{-n/2} h^{r +\beta_\Omega}.
$$
We may then conclude \eqref{Eq: Claim10Disc} by inserting \eqref{Eq: Claim3ADisc}  and \eqref{Eq: Claim9Disc} into \eqref{Eq: Claim6Disc}.

\textbf{Step 4:} We show that 
\begin{equation}\label{Eq: Claim13Disc}
|E(\psi-\psi_h)(z) |\ \lesssim \ d^{-n-2\tilde{\mu}} h^{2R(s, s+\mu)} + d^{-n-\tilde{\mu}} h^{r(s+\mu)+\beta_\Omega}.
\end{equation}
For the remainder of this proof, we set $g := \psi$ and $g_h := \psi_h$. Since $E(\psi - \psi_h)$ is harmonic, we have by Assumption \ref{ass: lambda bounded} that 
\begin{equation}\label{Eq: Claim11DiscA}
    |E(\psi - \psi_h)(z)| \ \lesssim \ d^{-n/2-\tilde{\mu}} \|\psi - \psi_h\|_{H^{-\mu}(\Gamma)}.
\end{equation}
We thus obtain \eqref{Eq: Claim13Disc} upon proving that
\begin{equation}\label{Eq: Claim13Disc2}
    \|\psi - \psi_h\|_{H^{-\mu}(\Gamma)} \ \lesssim \ d^{-n/2-\tilde{\mu}} h^{2R(s, s+\mu)} + d^{-n/2} h^{r(s+\mu)+\beta_\Omega}.
\end{equation}
To this end we note that
$$\|\psi-\psi_h\|_{H^{-\mu}(\Gamma)} \ = \ \sup_{v \in H^\mu(\Gamma), \ \|v\|_{H^\mu(\Gamma)} = 1} \langle \psi-\psi_h, v \rangle,
$$
where $\langle ., . \rangle$ stands for the $H^{-\mu}-H^\mu$ duality product.
Given a $v \in H^\mu(\Gamma)$ with $\|v\|_{H^\mu(\Gamma)}=1$ and $\|\psi-\psi_h\|_{H^{-\mu}(\Gamma)} \lesssim \langle \psi-\psi_h, v \rangle$, we then let $\kappa$ be the solution of \eqref{Eq: hsvargamma} with $\gamma_\kappa(g)=\langle v,g\rangle$ and taking $g=\psi-\psi_h$. Assumption~\ref{Assumption: R} guarantees that $\kappa \in \tilde{H}^{s + R(s, s+\mu)}(\Gamma)$. Noting that $\psi_h$ is {\it not} the Galerkin approximation to $\psi$, we let $\kappa_h, \bar{\psi}_h \in \mathbb{T}_h$ be the $H^s(\Gamma)$ Galerkin approximations to $\kappa$ and $\psi$, respectively.  Then using Galerkin orthogonality of $\kappa-\kappa_h$ we have
\begin{equation} \label{eq: error_rep}
 \|\psi-\psi_h\|_{H^{-\mu}(\Gamma)} \ \lesssim \ \langle \psi-\psi_h, \kappa \rangle_{H^s(\Gamma)} \ = \ \langle \psi-\bar{\psi}_h, \kappa-\kappa_h \rangle_{H^s(\Gamma)} + \langle \psi-\psi_h, \kappa_h \rangle_{H^s(\Gamma)}.
\end{equation}
Let $\gamma_z \in H^\mu(\Gamma)$ be the extension of $g \mapsto Eg(z)$ guaranteed by Assumption~\ref{ass: lambda bounded}. Estimate~\eqref{Eq: L2 boundary embedding} together with Assumption~\ref{Assumption: R} and C\'ea's Lemma then yield 
\begin{equation} \label{eq: errstep1}
\|\psi-\bar{\psi}_h\|_{H^s(\Gamma)}  \ \lesssim \ h^{R(s, s+\mu)} \|\gamma_z\|_{H^{\mu}(\Gamma)}
\ \lesssim \ d^{-n/2-\tilde{\mu}} h^{R(s,s+\mu)}.
\end{equation}
Assumption \ref{Assumption: R} also  yields 
\begin{equation} \label{eq: errstep2}
\|\kappa-\kappa_h \|_{H^s(\Gamma)} \ \lesssim \ h^{R(s,s+\mu)} \|v\|_{H^\mu(\Gamma)} \ = \ h^{R(s,s+\mu)}.
\end{equation}

Using \eqref{Eq: psi}, \eqref{Eq: psiH}, \eqref{Eq: Claim10Disc} with $r=r(s+\mu)$, and the $H^s$-boundedness of the Galerkin projection, we next find that
\begin{equation}
\begin{aligned} \label{eq: kappahrep}
\langle \psi-\psi_h, \kappa_h \rangle_{H^s(\Gamma)}\ & = \ [(E-E_h)\kappa_h](x_i) \ \lesssim \ d^{-n/2} h^{r(s+\mu)+\beta_\Omega} \|\kappa_h\|_{H^{r(s+\mu)+\frac{1}{2}}(\Gamma)}. 
\end{aligned}
\end{equation}
If $r(s+\mu)+\frac{1}{2} \le s$, then 
\begin{equation*}
\|\kappa_h\|_{H^{r(s+\mu)+\frac{1}{2}}(\Gamma)} \ \le \ \|\kappa_h\|_{H^s(\Gamma)} \ \le \ \|v\|_{H^{-s}(\Gamma)} \ \lesssim \ 1.
\end{equation*}
If $s<r(s+\mu)+\frac{1}{2}$, then using the triangle inequality, inverse estimates (Proposition~\ref{p:inverse}), stability of $\pi_h$ (since $r(s+\mu)+\frac{1}{2}< \frac{3}{2}$),   approximation properties, and $r(s+\mu)+\frac{1}{2} \le s+R(s,s+\mu)$ yields
\begin{equation}\label{eq: kappahone}
\begin{aligned}
\|\kappa_h \|_{H^{r(s+\mu)+\frac{1}{2}}(\Gamma)} & \ \lesssim \ h^{s-r(s+\mu)-\frac{1}{2}} \|\kappa_h-\pi_h \kappa\|_{H^s(\Gamma)} + \|\pi_h \kappa\|_{H^{r(s+\mu)+\frac{1}{2}} (\Gamma)}
\\ & \ \lesssim \ h^{s-r(s+\mu)-\frac{1}{2}} \left ( \|\kappa_h -\kappa\|_{H^s(\Gamma)} + \|\kappa-\pi_h \kappa\|_{H^s(\Gamma)} \right ) + \|\kappa\|_{H^{r(s+\mu)+\frac{1}{2}}(\Gamma)}
\\ & \ \lesssim \ \|\kappa\|_{H^{r(s+\mu)+\frac{1}{2}}(\Gamma)}  \lesssim \|v\|_{H^{\mu}(\Gamma)} \lesssim 1.
\end{aligned}
\end{equation}
Thus $\langle \psi-\psi_h, \kappa_h\rangle_{H^s(\Gamma)} \lesssim d^{-n/2} h^{r(s+\mu)+\beta_\Omega}$, and recalling \eqref{eq: error_rep} we conclude that
\begin{equation}\label{Eq: Claim13Disc6}
\begin{aligned} 
      \|\psi - \psi_h\|_{H^{-\mu}(\Gamma)} & \lesssim \|\kappa-\kappa_h\|_{H^s(\Gamma)}\|\psi-\bar{\psi}_h\|_{H^s(\Gamma)} + \langle \psi-\psi_h, \kappa_h \rangle_{H^s(\Gamma)}
      \\ & \lesssim h^{R(s, s+\mu)}d^{-n/2-\tilde{\mu}} h^{R(s, s+\mu)}  + d^{-n/2} h^{r(s+\mu)+\beta_\Omega} 
      \\ &  \lesssim d^{-n/2-\tilde{\mu}} h^{2R(s,s+\mu)}  + d^{-n/2} h^{r(s+\mu)+\beta_\Omega}.  
      \end{aligned}
\end{equation}
This yields \eqref{Eq: Claim13Disc} when combined with \eqref{Eq: Claim11DiscA}.

\textbf{Final step:} We now complete the proof of \eqref{Eq: convphiboundinf}.  
We first bound $\|\psi_h\|_{H^{r(s+\mu)+\frac{1}{2}}(\Gamma)}$.  Recall that $\bar{\psi}_h \in \mathbb{T}_h$ is the $H^s(\Gamma)$ Galerkin approximation to $\psi$.  Employing \eqref{Eq: Claim10Disc} with $r=r(s+\mu)$ and inverse estimates yields
\begin{equation}\label{Eq: psih-psihbar}
    \begin{aligned}
        \|\bar{\psi}_h-\psi_h\|_{H^s(\Gamma)}^2 & = \langle \bar{\psi}_h-\psi_h, \bar{\psi}_h-\psi_h\rangle_{H^s(\Gamma)}  = [(E-E_h)(\bar{\psi}_h-\psi_h)](x_i) 
        \\ & \lesssim d^{-n/2} h^{r(s+\mu)+\beta_\Omega} \|\bar{\psi}_h -\psi_h\|_{H^{r(s+\mu)+\frac{1}{2}} (\Gamma)} 
        \\ & \lesssim d^{-n/2} h^{r(s+\mu)+\beta_\Omega+s-r(s+\mu)-\frac{1}{2}} \|\bar{\psi}_h-\psi_h\|_{H^s(\Gamma)}.
        \\ & \lesssim d^{-n/2} h^{\beta_\Omega+s-\frac{1}{2}} \|\bar{\psi}_h-\psi_h\|_{H^s(\Gamma)}.
    \end{aligned}
\end{equation}
Since $s>\frac{1}{2}$, we have
\begin{equation}
\|\bar{\psi}_h-\psi_h\|_{H^s(\Gamma)} \lesssim d^{-n/2} h^{\beta_\Omega}.  
\end{equation} 
If $r(s+\mu)+\frac{1}{2}>s$, we argue as in \eqref{eq: kappahone} to obtain
\begin{equation} \label{eq: psihone}
\begin{aligned}
\|\psi_h\|_{H^{r(s+\mu)+\frac{1}{2}}(\Gamma)} &\  \lesssim h^{s-r(s+\mu)-\frac{1}{2}}\|\bar{\psi}_h-\psi_h\|_{H^s(\Gamma)} + \|\psi\|_{H^{r(s+\mu)+\frac{1}{2}}(\Gamma)} 
\\ & \lesssim d^{-n/2} h^{s-r(s+\mu)-\frac{1}{2}+\beta_\Omega} + \|\gamma_z\|_{H^{\mu}(\Gamma)} 
\lesssim d^{-n/2-\tilde{\mu}},
\end{aligned}
\end{equation}
where in the last line we have used $s>\frac{1}{2}$ and $r(s+\mu) \le \beta_\Omega$. If $r(s+\mu)+\frac{1}{2} \le s$, then
\begin{equation} \label{eq: psih2}
\|\psi_h\|_{H^{r(s+\mu)+\frac{1}{2}}(\Gamma)} \ \lesssim \ \|\psi_h-\bar{\psi}_h\|_{H^s(\Gamma)} + \|\bar{\psi}_h\|_{H^s(\Gamma)} \lesssim d^{-n/2} h^{\beta_\Omega} + \|\gamma_z\|_{H^{-s}(\Gamma)} \lesssim d^{-n/2} \lesssim d^{-n/2-\tilde{\mu}}.
\end{equation}

To prove \eqref{Eq: convphiboundinf}, we decompose
\begin{equation}\label{Eq: Claim14Disc1}
    (\phi-\phi_h)(z) \ = \ (E\psi-E_h\psi_h)(z) \ = \ E(\psi-\psi_h)(z) + (E-E_h)\psi_h(z).
\end{equation}
Next, due to \eqref{Eq: Claim10Disc},  \eqref{eq: psihone}, and \eqref{eq: psih2},
\begin{equation}\label{Eq: Claim14Disc3}
    |(E-E_h)\psi_h(z)| 
   \ \lesssim \ d^{-n/2} h^{r(s+\mu)+\beta_\Omega} \|\psi_h\|_{H^{r(s+\mu)+\frac{1}{2}}(\Gamma)} \ \lesssim \ d^{-n-\tilde{\mu}} h^{r(s+\mu)+\beta_\Omega}.  
\end{equation}
Finally, \eqref{Eq: convphiboundinf} follows upon combining \eqref{Eq: Claim13Disc}, \eqref{Eq: Claim14Disc1}, and \eqref{Eq: Claim14Disc3}. 


\end{proof}

Note, importantly, that the previous result does not rely on a continuous embedding of $H^s(\Gamma)$ into $L_{\infty}(\Gamma)$. Now we show the error estimate for the optimal recovery in Case 3, i.e., $X=\{ v \in L_2(\Omega) \ : \ v|_{\Omega_d} \in L_\infty(\Omega_d)\}$ and $|.|_X = \|.\|_{L_\infty(\Omega_d)}$.
We let $u_h^*$ be the output of the optimal recovery algorithm (Algorithm~\ref{algo:rec}), where $\hat u_0=u_{0h}$ satisfies \eqref{Eq: u0 point approx} and $\hat \phi_j = E_h \psi_{jh}$ is the finite element approximation of $\phi_j$ with $\psi_{jh}$ the solution to \eqref{Eq: psiH} for $E_hg_h(\overline x) = E_h g_h(x_j)$.

\begin{corollary}\label{Cor: quantEstPointwiseRec}
    Suppose the assumptions of Theorem~\ref{Thm: theorateinf} hold and that $f$ is such that  \eqref{Eq: u0 point approx} and \eqref{Eq:u0H1} hold. Then we have
    \begin{equation}\label{Eq: quantEstPointwiseRec2D}
        \sup_{v\in \mathcal K_\om} \|v - u_h^*\|_{L_\infty(\Omega_d)} \ \leq \ R(\mathcal{K}_{\om})_{L_\infty(\Omega_d)} + C \max \Bigg\{h^{\tau_d} \ln\left(\frac{d}{2h}\right) , d^{-n/2} h^{\beta_\Omega+\be_f}, d^{-n-2\tilde{\mu}} h^{t_p} \Bigg\}.
    \end{equation}
\end{corollary}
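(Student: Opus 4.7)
The plan is to invoke Theorem~\ref{Thm: NORB} in Case~3 with $X=\{v\in L_2(\Omega)\ | \ v|_{\Omega_d}\in L_\infty(\Omega_d)\}$ and $|\cdot|_X=\|\cdot\|_{L_\infty(\Omega_d)}$. The key observation is that both the measurement locations $x_i$ and the points $z$ at which the semi-norm is measured lie in $\Omega_d$, so a single pointwise bound controls simultaneously the semi-norm error and the discrete measurement error required by the input tolerances \eqref{Eq: NROAlgEq0B} and \eqref{Eq: NROAlgEqTB} of Algorithm~\ref{algo:rec}.

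First, I would choose $\ep_1$ using the interior pointwise estimate \eqref{Eq: u0 point approx}. Applying it at each $x_i\in\Omega_d$ and taking the supremum over $z\in\overline{\Omega}_d$ yields
\begin{equation*}
\|u_{0h}-u_0\|_{L_\infty(\Omega_d)}+\max_{i=1,\dots,m}|u_{0h}(x_i)-u_0(x_i)|\ \lesssim\ h^{\tau_d}\ln(d/2h)+d^{-n/2}h^{\beta_\Omega+\beta_f},
\end{equation*}
so the right-hand side serves as a valid $\ep_1$. Next, I would choose $\ep_2$ by applying Theorem~\ref{Thm: theorateinf} at each $x_i\in\Omega_d$ and at a generic $z\in\overline{\Omega}_d$, obtaining
\begin{equation*}
\|\phi_j-\phi_{jh}\|_{L_\infty(\Omega_d)}+\max_{i=1,\dots,m}|(\phi_j-\phi_{jh})(x_i)|\ \lesssim\ d^{-n-2\tilde{\mu}}h^{2R(s,s+\mu)}+d^{-n-\tilde{\mu}}h^{r(s+\mu)+\beta_\Omega}\ \lesssim\ d^{-n-2\tilde{\mu}}h^{t_p},
\end{equation*}
which serves as a valid $\ep_2$.

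Next, I would verify that these tolerances satisfy \eqref{Eq: NORB} with $\ep\lesssim \ep_1+\ep_2$. Assumption~\ref{ass:X} is satisfied in Case~3 because $\mathcal{H}^s(\Omega)\subset L_\infty(\Omega_d)$ for every $s>1/2$ by the mean-value argument used to derive \eqref{e:Lambdas} together with \eqref{eq: EL2stable}; hence $C_X=\max_j\|\phi_j\|_{L_\infty(\Omega_d)}$ is finite and independent of $h$. For $h$ sufficiently small we have $\ep_2\le\bar{\ep}_2$, so $\delta\lesssim \ep_2$ by \eqref{e:delta}, while $m$, $\|G^{-1}\|_{\ell_1\to\ell_1}$, $\Lambda_0$, $\Lambda_{s,d}$, $C_B$, and $\|f\|_{H^{-1+\beta_\Omega}(\Omega)}$ are $h$-independent. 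Therefore the left-hand side of \eqref{Eq: NORB} is dominated by $C(\ep_1+\ep_2)$ for some $h$-independent constant $C$, and we may take $\ep:=C(\ep_1+\ep_2)$. The conclusion then follows from Theorem~\ref{Thm: NORB} after noting
\begin{equation*}
\ep_1+\ep_2\ \lesssim\ \max\left\{h^{\tau_d}\ln(d/2h),\ d^{-n/2}h^{\beta_\Omega+\beta_f},\ d^{-n-2\tilde{\mu}}h^{t_p}\right\}.
\end{equation*}

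The only item requiring care is accounting rather than analysis: one must confirm that every constant on the left of \eqref{Eq: NORB} is $h$-independent (using $C_X<\infty$ in Case~3 and $\delta\lesssim \ep_2$), and that the pointwise bounds from Theorem~\ref{Thm: theorateinf} and \eqref{Eq: u0 point approx} genuinely do double duty, simultaneously controlling the $L_\infty(\Omega_d)$ semi-norm (by taking the supremum over $z\in\overline{\Omega}_d$) and the discrete measurement errors (since each $x_i\in\Omega_d$).
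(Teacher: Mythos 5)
Your proposal is correct and follows exactly the paper's route: the paper's proof is simply ``combine Theorem~\ref{Thm: NORB} and Theorem~\ref{Thm: theorateinf},'' and you have supplied the same combination, with the bookkeeping (choice of $\ep_1$ from \eqref{Eq: u0 point approx}, $\ep_2$ from Theorem~\ref{Thm: theorateinf}, verification of Assumption~\ref{ass:X} in Case~3, and $h$-independence of the constants in \eqref{Eq: NORB}) spelled out explicitly.
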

\begin{proof}
    Combine Theorem~\ref{Thm: NORB} and Theorem~\ref{Thm: theorateinf}.
\end{proof}





\subsection{$H^1$ error estimates}\label{Subsec: H^1FEM}

Now, observe that \cite[Theorem 4.7]{binev2024solving} uses global $L_{\infty}$ estimation techniques to prove a convergence result for $\|\phi - \phi_h\|_{H^1(\Om)}$. However, those techniques rely on $s$ being sufficiently large so that $H^s(\Gamma)$ embeds continuously into $C(\Om)$. By using Theorem \ref{Thm: theorateinf}, we can obtain a new estimate for $\|\phi - \phi_h\|_{H^1(\Om)}$ that avoids the use of such embeddings when $x_i \in \Omega_d$. 




\begin{theorem}[$H^1$ estimate]\label{Thm: TheorateH^1}
    Assume that the setting of Theorem \ref{Thm: theorateinf} holds.  Then if $t_H := \min\{r(s+\mu), R(s, s + \mu)+R(s,s-\frac{1}{2})\}$, we have the estimate
    \begin{equation}\label{Eq: TheorateH^1Succinct}
 \|\phi - \phi_h\|_{H^1(\Om)}\ \lesssim \ h^{t_H}.  
    \end{equation}
    More explicitly,
    \begin{equation}\label{Eq: TheorateH^1Expanded}
         \|\phi - \phi_h\|_{H^1(\Om)} \ \lesssim \ d^{-n/2-\tilde{\mu}}\left [ h^{r(s+\mu)}+h^{R(s,s+\mu) + R(s,s-\frac{1}{2})} \right ]. 
    \end{equation}
\end{theorem}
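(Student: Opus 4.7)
The plan is to decompose the error as
\[
\phi-\phi_h \ =\ E(\psi-\psi_h) + (E-E_h)\psi_h
\]
and estimate each term in $H^1(\Omega)$ separately. The first term will produce the mixed rate $h^{R(s,s+\mu)+R(s,s-1/2)}$ through a duality argument on $\Gamma$, while the second term yields $h^{r(s+\mu)}$ via a direct C\'ea-type argument using the already-established regularity of $\psi_h$.

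For $(E-E_h)\psi_h$, I would invoke Step 1 of the proof of Theorem~\ref{Thm: theorateinf} (the C\'ea-based bound \eqref{Eq: Claim2Disc1}) with $r=r(s+\mu)$, noting $r(s+\mu)\le \beta_\Omega$ by \eqref{eq: rdef}. The quantity $\|\psi_h\|_{H^{r(s+\mu)+1/2}(\Gamma)}$ on the right was already bounded by $d^{-n/2-\tilde\mu}$ in \eqref{eq: psihone}--\eqref{eq: psih2}, so
\[
\|(E-E_h)\psi_h\|_{H^1(\Omega)} \ \lesssim\ h^{r(s+\mu)} \|\psi_h\|_{H^{r(s+\mu)+1/2}(\Gamma)} \ \lesssim\ d^{-n/2-\tilde\mu}\, h^{r(s+\mu)},
\]
which accounts for the first term on the right-hand side of \eqref{Eq: TheorateH^1Expanded}.

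For $E(\psi-\psi_h)$, the $H^{1/2}(\Gamma)\to H^1(\Omega)$ stability of the harmonic extension \eqref{Eq: EgStable} reduces the task to estimating $\|\psi-\psi_h\|_{H^{1/2}(\Gamma)}$ by an Aubin--Nitsche duality argument paralleling Step 4 of the proof of Theorem~\ref{Thm: theorateinf}. For a test function $v\in H^{-1/2}(\Gamma)$ with $\|v\|_{H^{-1/2}(\Gamma)}=1$, I would let $\kappa\in H^s(\Gamma)$ solve \eqref{Eq: hsvargamma} with $\gamma(g)=\langle v,g\rangle$; since $s>1/2$, Assumption~\ref{Assumption: R} applies with $\delta=s-1/2$, giving $\kappa\in \tilde{H}^{s+R(s,s-1/2)}(\Gamma)$ with norm $\lesssim 1$, and its $H^s$-Galerkin projection $\kappa_h$ satisfies $\|\kappa-\kappa_h\|_{H^s(\Gamma)}\lesssim h^{R(s,s-1/2)}$. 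The error identity
\[
\langle v,\psi-\psi_h\rangle \ =\ \langle \psi-\bar\psi_h,\kappa-\kappa_h\rangle_{H^s(\Gamma)} + \langle \psi-\psi_h,\kappa_h\rangle_{H^s(\Gamma)}
\]
(with $\bar\psi_h$ the $H^s$-Galerkin projection of $\psi$, exploiting Galerkin orthogonality of $\psi-\bar\psi_h$ against $\kappa_h$) then splits the bound into two contributions: Cauchy--Schwarz combined with \eqref{eq: errstep1} and the $H^s$-approximation of $\kappa$ controls the first by $d^{-n/2-\tilde\mu}\,h^{R(s,s+\mu)+R(s,s-1/2)}$, while the second equals $(E-E_h)\kappa_h(\bar x)$ by \eqref{Eq: psi} and \eqref{Eq: psiH}, and is estimated via \eqref{Eq: Claim10Disc} together with a regularity bound on $\kappa_h$ derived analogously to \eqref{eq: kappahone}.

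The main technical obstacle will be the bookkeeping for the second duality term $(E-E_h)\kappa_h(\bar x)$: bounding $\|\kappa_h\|_{H^{r+1/2}(\Gamma)}$ for an appropriate $r$ (e.g.\ $r=r(s-1/2)$) requires inverse estimates on $\mathbb{T}_h$ and $H^s$-stability of $\pi_h$ as in \eqref{eq: kappahone}, after which one must verify, using the definition \eqref{eq: rdef} of $r(\delta)$ and the cap $R(s,\delta)\le 2-s$, that the resulting rate is dominated by the two principal contributions $h^{r(s+\mu)}$ and $h^{R(s,s+\mu)+R(s,s-1/2)}$. Taking the supremum over $v$ and combining the estimates for the two pieces of the decomposition then yields \eqref{Eq: TheorateH^1Expanded}.
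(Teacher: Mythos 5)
Your proposal is correct and follows essentially the same route as the paper: the same splitting into $(E-E_h)\psi_h$ and $E(\psi-\psi_h)$, the first term via \eqref{Eq: Claim2Disc1} with $r=r(s+\mu)$ and the bounds \eqref{eq: psihone}--\eqref{eq: psih2}, and the second via \eqref{Eq: EgStable} plus the $H^{1/2}(\Gamma)$ duality argument with $\kappa$, the error identity, \eqref{eq: errstep1}, and \eqref{eq: kappahrep}--\eqref{eq: kappahone}, \eqref{Eq: Claim10Disc} with $r(s-\tfrac12)$. The final bookkeeping you flag is resolved exactly as in the paper by observing $\beta_\Omega+r(s-\tfrac12)\ge r(s+\mu)$ and $d^{-n/2}\lesssim d^{-n/2-\tilde\mu}$.
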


\begin{proof}
We employ the decomposition
\begin{equation}\label{Eq: TheorateH^1Eq0A}
     \|\phi - \phi_h\|_{H^1(\Om)} \ = \ \|E\psi - E_h\psi_h\|_{H^1(\Om)} \ \leq \ \|(E - E_h)\psi_h\|_{H^1(\Om)} + \|E(\psi - \psi_h)\|_{H^1(\Om)}.
\end{equation}
For the first term in \eqref{Eq: TheorateH^1Eq0A}, combining \eqref{Eq: Claim2Disc1} with \eqref{eq: psihone} and \eqref{eq: psih2} while replacing $r$ with $r(s + \mu)$ yields
\begin{equation}\label{Eq: TheorateH^1Eq0B}
    \|(E - E_h)\psi_h\|_{H^1(\Om)} \ \lesssim \ h^{r(s+\mu)}\|\psi_h\|_{H^{r(s+\mu) + 1/2}(\Gamma)} \ \lesssim \ h^{r(s+\mu)} d^{-n/2-\tilde{\mu}} .
\end{equation}
    Now we consider the second term in \eqref{Eq: TheorateH^1Eq0A}. We use \eqref{Eq: EgStable} and C\'ea's lemma to observe
    \begin{equation}\label{Eq: TheorateH^1Eq7}
        \|E(\psi - \psi_h)\|_{H^1(\Om)} \ \lesssim \ \|\psi - \psi_h\|_{H^{1/2}(\Gamma)} \ = \ \sup_{\gamma \in H^{-1/2}(\Gamma), \ \|\gamma\|_{H^{-1/2}(\Gamma)}=1} \gamma(\psi-\psi_h).
    \end{equation}
    Let $\gamma \in H^{-1/2}(\Gamma)$ with $\|\psi-\psi_h\|_{H^{1/2}(\Gamma)} \lesssim \gamma(\psi-\psi_h)$ and let $\kappa \in H^s(\Gamma)$ solve \eqref{Eq: hsvargamma} with $\gamma$ thus defined.  Letting $\bar{\psi}_h, \kappa_h \in \mathbb{T}_h$ be the $H^s(\Gamma)$ Galerkin approximations to $\psi, \kappa$, we have as in \eqref{eq: error_rep} that 
    \begin{equation} \label{eq: h1one}
        \begin{aligned}
            \|\psi-\psi_h\|_{H^{1/2}(\Gamma)} \ \lesssim \ \langle \psi-\bar{\psi}_h, \kappa-\kappa_h \rangle_{H^s(\Gamma)} + \langle \psi-\psi_h, \kappa_h \rangle_{H^s(\Gamma)}. 
        \end{aligned}
    \end{equation}
        Arguing as in \eqref{eq: errstep1}--\eqref{eq: errstep2} with $\gamma \in H^{-1/2}(\Gamma)$ replacing $\gamma_{\kappa} \in H^\mu(\Gamma)$ in \eqref{eq: errstep2} yields 
        \begin{equation}
        \langle \psi-\bar{\psi}_h, \kappa-\kappa_h \rangle_{H^s(\Gamma)} \ \lesssim \ h^{R(s,s+\mu) + R(s,s-\frac{1}{2})} d^{-n/2-\tilde{\mu}}.  
        \end{equation}
          We employ \eqref{eq: kappahrep}-\eqref{eq: kappahone} with $r(s-\frac{1}{2})$ in place of $r(s+\mu)$ and \eqref{Eq: Claim10Disc} with $r$ replaced by $r(s-\frac{1}{2})$  to obtain 
        \begin{equation} \label{eq: h1three}
        \langle \psi-\psi_h, \kappa_h \rangle_{H^s(\Gamma)} \ \lesssim \ d^{-n/2} h^{r(s-\frac{1}{2})+\beta_\Omega}.
            \end{equation}
Combining the preceding inequalities while noting that $d^{-n/2}\lesssim d^{-n/2-\tilde{\mu}}$ and $\beta_\Omega+r(s-\frac{1}{2})\ge r(s+\mu)$ yields \eqref{Eq: TheorateH^1Succinct} and \eqref{Eq: TheorateH^1Expanded}.  
\end{proof}

Now we discuss quantitative convergence rates for $\sup_{v \in \mathcal K_{\om}}\|v - u_h^*\|_{H^1(\Om)}$, where $u_h^*$ is the output of the optimal recovery algorithm, as an application of Theorem~\ref{Thm: NORB} (originally \cite[Theorem 3.6]{binev2024solving}), to the finite element setting.

\begin{corollary}\label{Cor: QuantConvUs}
    Suppose the assumptions of Theorem \ref{Thm: theorateinf} hold and that $f$ is such that \eqref{Eq: u0 point approx} and \eqref{Eq:u0H1} hold. Then, for some constants $C_1$ and $C_2$ with $C_2$ depending on $d$, we have
    \begin{equation}\label{Eq: QuantConvUs2D}
        \begin{split}
       \sup_{v\in \mathcal K_\om} &  \|v - u_h^*\|_{H^1(\Om)} 
\\ &        \leq \ R(\mathcal{K}_{\om})_{H^1(\Om)} + C_1\Big [h^{\beta_f} (1+d^{-n/2} h^{\beta_\Omega})+ h^{\tau_d} \ln\left(\frac{d}{2h}\right)  + d^{-n/2-\tilde{\mu}} h^{r(s+\mu)}(1+d^{-n/2} h^{\beta_\Omega})\\ & \qquad   + d^{-n/2-\tilde{\mu} } h^{R(s,s+\mu)+R(s,s-\frac{1}{2})}(1+d^{-n/2-\tilde{\mu}} h^{R(s,s+\mu)-R(s,s-\frac{1}{2})}) \Big ]\\
       &\leq \ R(\mathcal{K}_{\om})_{H^1(\Om)} + C_2\left (h^{\beta_f}+ h^{\tau_d} \ln\left(\frac{d}{2h}\right) + h^{r(s+\mu)} + h^{R(s,s+\mu)+R(s,s-\frac{1}{2})} \right).
        \end{split}
    \end{equation}    
\end{corollary}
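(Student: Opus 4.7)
The corollary is a direct application of Theorem~\ref{Thm: NORB} (near-optimal recovery) with the specific choice $X = H^1(\Omega)$, $|.|_X = \|.\|_{H^1(\Omega)}$, where Assumption~\ref{ass:X} is verified by the embedding estimate \eqref{Eq: CsStability}. The main task is to select the tolerances $\epsilon_1,\epsilon_2$ consistent with the finite element errors made at Steps~1 and~2 of Algorithm~\ref{algo:rec}, and then trace the resulting bound \eqref{Eq: NORB} for $\epsilon$ through to obtain the stated estimate.

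\textbf{Choice of tolerances.} For the approximation of $u_0$ by $u_{0h}$, I would combine the $H^1$ error bound \eqref{Eq:u0H1} (valid since $f \in H^{-1+\beta_f}(\Omega)$ with $\beta_f \leq \beta_\Omega$) with the interior pointwise estimate \eqref{Eq: u0 point approx} applied at each measurement location $x_i \in \Omega_d$. This gives
\begin{equation*}
\epsilon_1 \ \lesssim \ h^{\beta_f} + h^{\tau_d} \ln\left(\tfrac{d}{2h}\right) + d^{-n/2} h^{\beta_\Omega + \beta_f}.
\end{equation*}
For the Riesz representers, the $H^1$ bound comes from Theorem~\ref{Thm: TheorateH^1} while the pointwise bound at $x_i \in \Omega_d$ comes from Theorem~\ref{Thm: theorateinf}, yielding
\begin{equation*}
\epsilon_2 \ \lesssim \ d^{-n/2-\tilde\mu}\left(h^{r(s+\mu)} + h^{R(s,s+\mu)+R(s,s-\tfrac 12)}\right) + d^{-n-2\tilde\mu} h^{2R(s,s+\mu)} + d^{-n-\tilde\mu} h^{r(s+\mu)+\beta_\Omega}.
\end{equation*}
Since $h \le d/c_0$, the pointwise contributions with negative powers $d^{-n-\tilde\mu}$ and $d^{-n-2\tilde\mu}$ can be rewritten as $(d^{-n/2-\tilde\mu} h^{r(s+\mu)})(d^{-n/2}h^{\beta_\Omega})$ and $(d^{-n/2-\tilde\mu}h^{R(s,s+\mu)+R(s,s-\frac12)})(d^{-n/2-\tilde\mu} h^{R(s,s+\mu)-R(s,s-\frac12)})$ respectively, which is exactly the source of the multiplicative $(1+\cdots)$ factors in the first displayed inequality.

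\textbf{Combining the pieces.} Next I would plug $\epsilon_1,\epsilon_2$ into the admissibility condition \eqref{Eq: NORB}. The leading term $\epsilon_1$ from the condition produces the term $h^{\beta_f}(1+d^{-n/2}h^{\beta_\Omega}) + h^{\tau_d}\ln(d/2h)$ in \eqref{Eq: QuantConvUs2D}. The term linear in $\epsilon_2$ (with coefficient depending on $\|G^{-1}\|_{\ell_1\to\ell_1}$, $\Lambda_0 \|f\|$, and $\Lambda_{s,d} C_B$) yields the contributions proportional to $\epsilon_2$. The cross term $(C_X+\epsilon_2)(m\|G^{-1}\|_{\ell_1\to\ell_1}\epsilon_1 + m(\Lambda_{s,d}C_B+\Lambda_0\|f\|+\epsilon_1)\delta)$, where $\delta$ is given by \eqref{e:delta} and satisfies $\delta \lesssim \epsilon_2$ for sufficiently small $\epsilon_2$, contributes terms of quadratic type $\epsilon_1\epsilon_2 + \epsilon_2^2$; these are what give rise to the second multiplicative factors $(1+d^{-n/2}h^{\beta_\Omega})$ and $(1+d^{-n/2-\tilde\mu}h^{R(s,s+\mu)-R(s,s-\frac12)})$ in the first form of the estimate. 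Collecting these gives the first inequality in \eqref{Eq: QuantConvUs2D}.

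\textbf{Simplification and main obstacle.} For the second, simpler inequality, I would absorb all negative powers of $d$ into a single constant $C_2 = C_2(d)$, using $h\leq d/c_0$ to bound factors of the form $d^{-n/2}h^{\beta_\Omega}$ and $d^{-n/2-\tilde\mu}h^{R(s,s+\mu)-R(s,s-\frac12)}$ by constants depending on $d$. This produces the four-term bound of the form $h^{\beta_f} + h^{\tau_d}\ln(d/2h) + h^{r(s+\mu)} + h^{R(s,s+\mu)+R(s,s-\frac12)}$. No substantial difficulty is anticipated: the argument is essentially accounting, since all technical estimates are already established in Theorem~\ref{Thm: NORB}, Theorem~\ref{Thm: theorateinf}, Theorem~\ref{Thm: TheorateH^1}, and the $u_0$ bounds \eqref{Eq: u0 point approx}, \eqref{Eq:u0H1}. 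The main delicate point is ensuring the cross terms from \eqref{Eq: NORB} are correctly paired to yield the precise $(1+\cdots)$ factors displayed in the first form of \eqref{Eq: QuantConvUs2D}, and verifying that $\epsilon_2 \leq \overline\epsilon_2$ (required by Lemma~\ref{l:estim_G}) holds for $h$ small enough relative to $d$.
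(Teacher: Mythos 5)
Your proposal is correct and follows essentially the same route as the paper: the paper's proof is a one-line application of Theorem~\ref{Thm: NORB} with $\ep_1$ supplied by \eqref{Eq:u0H1} and \eqref{Eq: u0 point approx}, $\ep_2$ by Theorems~\ref{Thm: theorateinf} and~\ref{Thm: TheorateH^1}, and the observation $r(s+\mu)\le\beta_\Omega$, which is exactly the accounting you carry out (your regrouping of $d^{-n-\tilde\mu}h^{r(s+\mu)+\beta_\Omega}$ and $d^{-n-2\tilde\mu}h^{2R(s,s+\mu)}$ into the $(1+\cdots)$ factors matches the stated estimate). Your additional remarks on verifying $\ep_2\le\overline{\ep}_2$ and absorbing powers of $d$ into $C_2(d)$ are consistent with the paper's intent, just spelled out more explicitly.
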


\begin{proof} We apply Theorem~\ref{Thm: NORB} invoking \eqref{Eq:u0H1}, \eqref{Eq: u0 point approx} for the approximation of $u_0$ and Theorem \ref{Thm: theorateinf}, Theorem \ref{Thm: TheorateH^1}, and $r(s+\mu) \le \beta_\Omega$ for the approximation of the Riesz representers.
\end{proof}

\subsection{Recovery error in $L_{\infty}(\Om)$}\label{Subsec: LinfErrorOpt}

Now we seek quantitative error estimates for $\sup_{v\in \mathcal K_{\om}}\|v - u_h^*\|_{L_{\infty}(\Om)}$, where $u_h^*$ is the output of the optimal recovery algorithm in the finite element setting. To achieve these, we need to assume $L_{\infty}$-stability of the Galerkin projection $R_h: H^1_0(\Om) \rightarrow \mathbb{V}^0_h$, with $R_hv$ defined via
\begin{equation}\label{Eq: Rh}
    \int_{\Om}\grad R_h v \cdot \grad v_h \ = \ \int_{\Om}\grad v \cdot \grad v_h, \ \quad \ \fa v_h \in \mathbb{V}^0_h.
\end{equation}
Under various conditions on the mesh and domain, this will guarantee the existence of $C_{\text{gal}} > 0$ and $a \geq 0$ so that for all $h > 0$ and $v \in H^1_0(\Om) \cap L_{\infty}(\Om)$, we have
\begin{equation}\label{Eq: RitzStability}
    \|R_hv\|_{L_{\infty}(\Om)} \ \leq \ C_{\text{gal}}(1 + |\ln(h)|)\|v\|_{L_{\infty}(\Om)}.
\end{equation}
In the specific settings  of interest to us, such estimates hold:
\begin{itemize}
    \item Convex polygonal domain in $\R^2$ with a quasi-uniform mesh (see \cite[Theorem 1.2]{Sch80})
    \item Convex polyhedral domain in $\R^3$ (see \cite[Theorem 12]{LV16})
\end{itemize} 
In either case, we assume $s > \frac{n - 1}{2}$ and maintain Lemma 4.6 of \cite{binev2024solving}, which in our setting reads:
\begin{equation}\label{Eq: LinfExtError}
    \|(E - E_h)g_h\|_{L_{\infty}(\Om)} \ \lesssim \ (1 + |\ln(h)|)h^{r(s+\mu) + 1 - \frac{n}{2}}\|g_h\|_{H^s(\Gamma)}, \ \fa g_h \in \mathbb{T}_h.
\end{equation}

Here is the $L_{\infty}$ estimate for the Riesz representers.  The proof roughly follows those of \cite[Lemma 4.6 and Theorem 4.7]{binev2024solving}, but taking into account the assumption $x_i \in \Omega_d$.

\begin{theorem}[$L_{\infty}$ estimate]\label{Thm: RieszGlobalLInfinity}
Let $\beta^*$ be as in Assumption~\ref{Assumption: R omega} and set $\beta_\Omega=\min(\beta^*,1)$. 
Let also $\mu, \tilde{\mu} \ge 0$ be as in Assumption \ref{ass: lambda bounded}, $R(.,.)$ as in Assumption \ref{Assumption: R}, and $r(.)$ be given by \eqref{eq: rdef}.
Assume that $s+R(s,s+\mu)> \frac{n-1}{2}$ and let $\theta \in \left(\frac{n - 1}{2}, s +R(s,s+\mu) \right)$.
Let also 
\begin{equation}
t_{\infty} \ := \ \left \{ \begin{array}{l} \min\Big ( r(s+\mu) + 1 - \frac{n}{2},R(s, s + \mu)+ R(s, s - \theta), \beta_\Omega + r(s-\theta) \Big ) , ~ \frac{n-1}{2}<\theta<s,
\\ \min\Big ( r(s+\mu) + 1 - \frac{n}{2}, s - \theta + R(s, s + \mu), \beta_\Omega + 2s-\theta-\frac{1}{2} \Big ) , ~s \le \frac{n-1}{2}<\theta.
\end{array} \right .  
\end{equation}
Then we have the estimate
    \begin{equation}\label{Eq: RieszGlobalLInfinity}
        \begin{split}    
        \|\phi - \phi_h\|_{L_{\infty}(\Om)} & \lesssim (1+|\ln h|) d^{-n/2-\tilde{\mu}} h^{r(s+\mu)+1-n/2}
\\ & \qquad + \left \{ \begin{array}{l} d^{-n/2-\tilde{\mu}} h^{R(s,s-\theta)+R(s,s+\mu)} + d^{-n/2} h^{\beta_\Omega +r(s-\theta)}, ~~\frac{n-1}{2}<\theta<s, 
\\ d^{-n/2-\tilde{\mu}} h^{s-\theta+R(s,s+\mu)}+ d^{-n/2} h^{\beta_\Omega+2s-\theta-1/2}, ~~s \le \frac{n-1}{2} < \theta
\end{array} \right . \\
& \lesssim 
        \ (1 + |\ln(h)|)h^{t_{\infty}}.
        \end{split}
    \end{equation}
\end{theorem}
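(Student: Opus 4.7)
My plan is to follow the decomposition template of Theorems~\ref{Thm: theorateinf} and~\ref{Thm: TheorateH^1}, writing
\[
\phi-\phi_h \ = \ (E-E_h)\psi_h \ + \ E(\psi-\psi_h),
\]
and applying the hypothesized $L_\infty$ bound \eqref{Eq: LinfExtError} to the first summand. The resulting estimate $\|(E-E_h)\psi_h\|_{L_\infty(\Omega)}\lesssim (1+|\ln h|)\,h^{r(s+\mu)+1-n/2}\,\|\psi_h\|_{H^s(\Gamma)}$ reduces to the first term of \eqref{Eq: RieszGlobalLInfinity} once I establish $\|\psi_h\|_{H^s(\Gamma)}\lesssim d^{-n/2-\tilde{\mu}}$. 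This stability would come from $\|\psi\|_{H^s(\Gamma)}=\|\gamma_z\|_{H^{-s}(\Gamma)}\le\|\gamma_z\|_{H^\mu(\Gamma)}\lesssim d^{-n/2-\tilde{\mu}}$ (Assumption~\ref{ass: lambda bounded}) together with $\|\bar{\psi}_h-\psi_h\|_{H^s(\Gamma)}\lesssim d^{-n/2}h^{\beta_\Omega+s-1/2}$ from \eqref{Eq: psih-psihbar}, where $\bar{\psi}_h$ denotes the $H^s(\Gamma)$-Galerkin projection of $\psi$.

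\textbf{Harmonic term via the maximum principle and case analysis.}  Since $E(\psi-\psi_h)$ is harmonic on $\Omega$ and, thanks to $s+R(s,s+\mu)>(n-1)/2$ combined with Assumption~\ref{Assumption: R}, $\psi\in H^{s+R(s,s+\mu)}(\Gamma)\subset C(\Gamma)$, the maximum principle gives $\|E(\psi-\psi_h)\|_{L_\infty(\Omega)}\le\|\psi-\psi_h\|_{L_\infty(\Gamma)}$, and the Sobolev embedding $H^\theta(\Gamma)\hookrightarrow L_\infty(\Gamma)$ (valid because $\theta>(n-1)/2$) reduces matters to estimating $\|\psi-\psi_h\|_{H^\theta(\Gamma)}$. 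When $(n-1)/2<\theta<s$ (so $s>(n-1)/2$) I would replicate the duality used for Theorem~\ref{Thm: TheorateH^1}: pair $\psi-\psi_h$ against an arbitrary $v\in H^{-\theta}(\Gamma)$, let $\kappa\in H^s(\Gamma)$ solve \eqref{Eq: hsvargamma} for the data $\gamma(\cdot)=\langle v,\cdot\rangle_{L_2}\in H^{-s+(s-\theta)}(\Gamma)$, and split $\langle\psi-\psi_h,\kappa\rangle_{H^s}=\langle\psi-\bar{\psi}_h,\kappa-\kappa_h\rangle_{H^s}+\langle\psi-\psi_h,\kappa_h\rangle_{H^s}$ with $\kappa_h$ the $H^s$-Galerkin projection of $\kappa$; the first term is bounded by Assumption~\ref{Assumption: R} to extract $d^{-n/2-\tilde{\mu}}h^{R(s,s+\mu)+R(s,s-\theta)}$, while the second reduces to $(E-E_h)\kappa_h(\overline{x})$ and is bounded by \eqref{Eq: Claim10Disc} with $r=r(s-\theta)$ together with the $H^{r(s-\theta)+1/2}$-control on $\kappa_h$ obtained exactly as in \eqref{eq: kappahone}. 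When $s\le(n-1)/2<\theta$, duality breaks down since $H^{-\theta}\not\subset H^{-s}$; instead I would split $\psi-\psi_h=(\psi-\bar{\psi}_h)+(\bar{\psi}_h-\psi_h)$, apply the inverse estimate from Proposition~\ref{p:inverse} to the second piece starting from the $H^s$-bound \eqref{Eq: psih-psihbar} to obtain $d^{-n/2}h^{\beta_\Omega+2s-\theta-1/2}$, and on the first piece write $\psi-\bar{\psi}_h=(\psi-I_h\psi)+(I_h\psi-\bar{\psi}_h)$, using Lemma~\ref{frac_approximation2} (either \eqref{approx_smaller} or \eqref{approx_top} depending on whether $\theta<1$ or $\theta\ge 1$) and $\|\psi\|_{\tilde{H}^{s+R(s,s+\mu)}(\Gamma)}\lesssim d^{-n/2-\tilde{\mu}}$ from Assumption~\ref{Assumption: R} for the interpolation error, followed by an inverse estimate plus C\'ea's lemma on the remaining finite-dimensional piece to yield $d^{-n/2-\tilde{\mu}}h^{s-\theta+R(s,s+\mu)}$.

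\textbf{Main obstacle.}  The delicate portion will be the second case, where $\theta$ may approach $\tfrac{3}{2}$ and the right-hand-side norms must consistently be routed through the broken space $\tilde{H}^{s+R(s,s+\mu)}(\Gamma)$ from \eqref{e:broken_spaces}, since $H^{s+R(s,s+\mu)}(\Gamma)$ degenerates on the polyhedral surface $\Gamma$ once $s+R(s,s+\mu)\ge\tfrac{3}{2}$. The interplay between the inverse estimate of Proposition~\ref{p:inverse}, the $H^\theta$-stability of the Scott--Zhang interpolant $\pi_h$ from Lemma~\ref{frac_approximation}, and the localized approximation bounds of Lemma~\ref{frac_approximation2} must be arranged so that neither the inverse estimates nor the Sobolev embedding cross the $\tfrac{3}{2}$-threshold for the intrinsic norm on the polyhedral surface. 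Once these pieces are assembled and inserted back into the maximum-principle reduction, combining with the bound on $(E-E_h)\psi_h$ yields \eqref{Eq: RieszGlobalLInfinity}.
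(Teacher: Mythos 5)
Your proposal is correct and follows essentially the same route as the paper: the same splitting $\phi-\phi_h=(E-E_h)\psi_h+E(\psi-\psi_h)$, the same maximum-principle/Sobolev-embedding reduction to $\|\psi-\psi_h\|_{H^\theta(\Gamma)}$ with the duality argument of Theorem~\ref{Thm: TheorateH^1} when $\frac{n-1}{2}<\theta<s$, and the same split $(\psi-\bar{\psi}_h)+(\bar{\psi}_h-\psi_h)$ handled by interpolation, inverse estimates, and \eqref{Eq: psih-psihbar} when $s\le\frac{n-1}{2}<\theta$ (your use of $I_h$ versus the paper's $\pi_h$ is immaterial). The only cosmetic difference is in the first term: the paper re-derives the interior maximum-norm bound keeping $\|\psi_h\|_{H^{r(s+\mu)+1/2}(\Gamma)}\lesssim d^{-n/2-\tilde{\mu}}$ via \eqref{eq: psihone}--\eqref{eq: psih2}, which is what actually substantiates the rate $h^{r(s+\mu)+1-n/2}$ behind \eqref{Eq: LinfExtError}, whereas you invoke \eqref{Eq: LinfExtError} verbatim with only $H^s(\Gamma)$-control of $\psi_h$; since you have the stronger bounds available, this is immediately repaired.
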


\begin{proof}
First imitating the proof of \cite[Lemma 4.6]{binev2024solving} while applying \eqref{eq: psihone}-\eqref{eq: psih2}, we find that
\begin{equation} \label{eq: linfstepone}
\begin{aligned}
\|&(E-E_h) \psi_h\|_{L_\infty(\Omega)} 
\\ & \lesssim \ln \left(\frac{1}{h}\right) h^{r(s+\mu)+1-\frac{n}{2}} |E \psi_h|_{C^{r(s+\mu)+1-\frac{n}{2}}(\Omega)} 
 \lesssim \ln \left(\frac{1}{h}\right) h^{r(s+\mu)+1-\frac{n}{2}} \|E \psi_h \|_{H^{r(s+\mu)+1}(\Omega)} \\ & \lesssim \ln \left(\frac{1}{h}\right) h^{r(s+\mu)+1-\frac{n}{2}} \|\psi_h\|_{H^{r(s+\mu)+\frac{1}{2}}(\Gamma)}
\lesssim \ln \left(\frac{1}{h}\right) d^{-n/2-\tilde{\mu}} h^{r(s+\mu)+1-\frac{n}{2}}.
\end{aligned}
\end{equation}
In order to bound $\|E(\psi-\psi_h)\|_{L_\infty(\Omega)}$, first assume that $s> \frac{n-1}{2}$ and let $s>\theta>\frac{n-1}{2}$.  Then employing the maximum principle, a Sobolev embedding, and proceeding as in \eqref{eq: h1one}-\eqref{eq: h1three} with $H^{\theta}(\Gamma)$ replacing $H^{1/2}(\Gamma)$ in the arguments we find
\begin{equation} \label{eq: linfsteptwo}
    \|E(\psi-\psi_h)\|_{L_\infty(\Omega)} \le \|\psi-\psi_h\|_{L_\infty(\Gamma)} \lesssim \|\psi-\psi_h\|_{H^\theta(\Gamma)} \lesssim d^{-n/2-\tilde{\mu}} h^{R(s,s+\mu)+R(s,s-\theta)} + d^{-n/2} h^{r(s-\theta)+\beta_\Omega}.
\end{equation}
If instead $s\le \frac{n-1}{2}< \theta $, we use a Sobolev embedding, inverse estimate, approximation properties, and \eqref{eq: errstep1} to obtain
\begin{equation}
\begin{aligned}
\|E(\psi-\bar{\psi}_h)\|_{L_\infty(\Omega)} & \lesssim \|\psi-\pi_h \psi\|_{H^\theta(\Gamma)}+h^{s-\theta} (\|\psi-\bar{\psi}_h\|_{H^s(\Gamma)}+ \|\psi-\pi_h \psi\|_{H^s(\Gamma)}) 
\\ & \lesssim h^{s-\theta+R(s,s+\mu)} d^{-n/2-\tilde{\mu}}.
\end{aligned}
\end{equation}
A Sobolev embedding, inverse estimate, and \eqref{Eq: psih-psihbar} also yield
\begin{equation}\label{Eq: inverseEmbed}
    \|E(\bar{\psi}_h-\psi_h)\|_{L_\infty(\Omega)} \lesssim h^{s-\theta}\|\bar{\psi}_h-\psi_h\|_{H^s(\Gamma)} \lesssim d^{-n/2} h^{\beta_\Omega+2s-\theta-\frac{1}{2} }.
\end{equation}
Combining the Triangle Inequality with \eqref{eq: linfstepone}-\eqref{Eq: inverseEmbed} completes the proof.
\end{proof}

\begin{remark} We compare the powers of $h$ in \eqref{Eq: RieszGlobalLInfinity} when $s > \frac{n-1}{2}$ versus when $s \le \frac{n-1}{2}$.  Recall that $R(s,s-\theta)\le s-\theta$ with equality occurring when sufficient elliptic regularity holds.  In the latter case $R(s,s+\mu)+R(s,s-\theta)=R(s,s+\mu)+s-\theta$, i.e., the powers of $h$ in the second term take the same form in both cases.  Also, recall that $r(s-\theta)=\min \left\{\beta_\Omega, 1-\epsilon, s+R(s,s-\theta)-\frac{1}{2} \right\}.$ When the minimum occurs in the third term and sufficient elliptic regularity holds, we thus have $r(s-\theta) =2s-\theta-\frac{1}{2}$, in which case the powers of $h$ in the third term of the previous estimates take the same form in both cases.  
\end{remark}

We may use this to derive an error estimate for an optimal recovery in $L_{\infty}(\Om)$, similarly to Corollaries \ref{Cor: QuantConvUs} and \ref{Cor: quantEstPointwiseRec}. For the sake of readability, we do not collate powers of $d$ in this estimate.

\begin{corollary}\label{Cor: QuantConvLInf}
    Suppose the assumptions of Theorem \ref{Thm: RieszGlobalLInfinity} and \eqref{Eq: linf global} hold and in addition that $s>\frac{n-1}{2}$. Then we have for some $C > 0$ depending on $d$ that
    \begin{equation}\label{Eq: QuantConvLInf}
        \sup_{v\in \mathcal K_\om}\|v - u_h^*\|_{L_{\infty}(\Om)} \ \leq \ R(\mathcal{K}_{\om})_{L_{\infty}(\Om)} + C\min\Big\{1 + |\ln(h)|, \ \ln\left(\frac{1}{h}\right)\Big\}h^{\min\{\tau_{\Om}, t_{\infty}\}}.
    \end{equation}
\end{corollary}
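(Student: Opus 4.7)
The plan is to invoke Theorem~\ref{Thm: NORB} with the Banach space $(X,|\cdot|_X)=(L_\infty(\Omega),\|\cdot\|_{L_\infty(\Omega)})$. Assumption~\ref{ass:X} is satisfied under the hypothesis $s>\tfrac{n-1}{2}$: indeed, for $v\in \mathcal H^s(\Omega)$ the maximum principle gives $\|v\|_{L_\infty(\Omega)}\le \|v_\Gamma\|_{L_\infty(\Gamma)}$, and the Sobolev embedding $H^s(\Gamma)\hookrightarrow C(\Gamma)$ yields $\|v_\Gamma\|_{L_\infty(\Gamma)}\lesssim \|v\|_{\mathcal H^s(\Omega)}$. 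It remains to check the regularity hypotheses on $u_0$ required to feed Theorem~\ref{Thm: NORB}, and to select admissible tolerances $\varepsilon_1,\varepsilon_2$.

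For Step 1 of Algorithm~\ref{algo:rec} I would set $\widehat u_0=u_{h0}$ and use \eqref{Eq: linf global} to bound $\|u_0-u_{h0}\|_{L_\infty(\Omega)}\lesssim \ln(1/h)\,h^{\tau_\Omega}$, together with the interior pointwise estimate \eqref{Eq: u0 point approx} to bound $\max_i|(u_0-u_{h0})(x_i)|$. Since each $x_i\in\Omega_d$ and $d$ is fixed, the interior term is dominated (up to constants depending on $d$) by $\ln(1/h)h^{\tau_\Omega}$, so one may take $\varepsilon_1\lesssim \ln(1/h)\,h^{\tau_\Omega}$. For Step 2, I would set $\widehat\phi_j=E_h\psi_{j,h}$ and bound $\|\phi_j-\widehat\phi_j\|_{L_\infty(\Omega)}$ using Theorem~\ref{Thm: RieszGlobalLInfinity}, giving a contribution of order $(1+|\ln h|)h^{t_\infty}$ with the $d$-dependence hidden in the constant. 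The pointwise quantity $\max_i|(\phi_j-\widehat\phi_j)(x_i)|$ is controlled by Theorem~\ref{Thm: theorateinf} with rate $h^{t_p}$, and since $t_p\ge t_\infty$ in the relevant regime (the pointwise estimate being finer than the global one), the $L_\infty$ bound dominates. Thus one may take $\varepsilon_2\lesssim (1+|\ln h|)h^{t_\infty}$.

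Plugging these choices of $\varepsilon_1,\varepsilon_2$ into \eqref{Eq: NORB} and \eqref{Eq: NORThmB}, the right-hand side of \eqref{Eq: NORB} is a sum of terms each of which is, up to $d$-dependent multiplicative constants, bounded by $\min\{1+|\ln h|,\ln(1/h)\}\,h^{\min\{\tau_\Omega,t_\infty\}}$. The factors $m\|G^{-1}\|_{\ell_1\to\ell_1}$, $C_B$, $\Lambda_0\|f\|_{H^{-1+\beta_\Omega}(\Omega)}$, $\Lambda_{s,d}$, $C_X=\max_j\|\phi_j\|_{L_\infty(\Omega)}$ (finite by Assumption~\ref{ass:X} and the bound $\|\phi_j\|_{\mathcal H^s(\Omega)}=\|\psi_j\|_{H^s(\Gamma)}\lesssim \|\gamma_{x_j}\|_{H^{-s}(\Gamma)}$), and $\delta$ given by \eqref{e:delta} are all absorbed into the constant $C$; $\delta$ is well-defined for $h$ small enough that $\varepsilon_2\le\overline\varepsilon_2$. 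The conclusion \eqref{Eq: QuantConvLInf} follows from \eqref{Eq: NORThmB}.

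The only genuine subtlety is verifying that the pointwise error $\max_i|(\phi_j-\widehat\phi_j)(x_i)|$ is indeed dominated by the global $L_\infty$ rate (so that $\varepsilon_2$ is controlled by a single quantity). This comparison reduces to checking, for each of the cases in the definition of $t_\infty$ and $t_p$, that $t_p\ge t_\infty$; each case is a short algebraic check using $r(\cdot)\le\beta_\Omega$ and $R(s,\cdot)\le 2-s$. Once this is done, the rest of the proof is simply a bookkeeping exercise to collect the $d$-dependent constants into $C$.
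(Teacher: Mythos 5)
Your proposal follows the same route as the paper: the paper's proof is exactly an application of Theorem~\ref{Thm: NORB} with $X=L_\infty(\Omega)$ (admissible since $s>\frac{n-1}{2}$ gives Assumption~\ref{ass:X}), taking $\ep_1=\mathcal O\bigl(\ln(\tfrac1h)h^{\tau_\Omega}\bigr)$ from \eqref{Eq: linf global} and $\ep_2=\mathcal O\bigl((1+|\ln h|)h^{t_\infty}\bigr)$ from Theorem~\ref{Thm: RieszGlobalLInfinity}, with all $d$-dependent quantities absorbed into $C$.

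One remark: the step you single out as the ``only genuine subtlety'' --- checking $t_p\ge t_\infty$ so that the pointwise errors at the $x_i$ are dominated by the global rate --- is both unnecessary and not actually guaranteed by the assumptions (it would require monotonicity of $R(s,\cdot)$ and of $r(\cdot)$, which is plausible but never assumed, and the comparison of $2R(s,s+\mu)$ with $r(s+\mu)+1-\tfrac n2$ can fail for small $R$). The fix is trivial: since $x_i\in\Omega\subset\Omega$, one has $\max_i|(u_0-\widehat u_0)(x_i)|\le\|u_0-\widehat u_0\|_{L_\infty(\Omega)}$ and $\max_i|(\phi_j-\widehat\phi_j)(x_i)|\le\|\phi_j-\widehat\phi_j\|_{L_\infty(\Omega)}$, so the global $L_\infty$ bounds \eqref{Eq: linf global} and \eqref{Eq: RieszGlobalLInfinity} already control both requirements in \eqref{Eq: NROAlgEq0B} and \eqref{Eq: NROAlgEqTB}; neither \eqref{Eq: u0 point approx} nor Theorem~\ref{Thm: theorateinf} is needed here. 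With that adjustment your argument coincides with the paper's proof.
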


\begin{proof}
    Since $s > \frac{n - 1}{2}$, the Banach space $(L_{\infty}(\Om), \|\cdot\|_{L_{\infty}(\Om)})$ satisfies Assumption \ref{ass:X}, so we may apply Theorem \ref{Thm: NORB} with the tolerances $\ep_1 = \mathcal{O}\left(\ln\left(\frac{1}{h}\right)h^{\tau_{\Om}}\right)$ (due to \eqref{Eq: linf global}) and $\ep_2 = \mathcal{O}((1 + |\ln(h)|)h^{t_{\infty}})$ (due to Theorem \ref{Thm: RieszGlobalLInfinity}).
\end{proof}

\subsection{Comparison with global error estimation techniques}
In this subsection we summarize expected orders of convergence we have obtained for $\phi-\phi_h$ in various norms using local error estimation techniques under the assumption that the measurement points $x_i$ lie in $\Omega_d = \{x \in \Omega: {\rm dist}(x, \Gamma) \ge d \}$, and then compare them with corresponding results obtained in \cite{binev2024solving}, where the less restrictive assumption $x_i \in \overline{\Omega}$ and global error estimation techniques were used.  In both cases we assume that full elliptic regularity holds on $\Omega$ and $\Gamma$, that is,  $\beta_\Omega=1$, $R(s,\delta)=\min(\delta,2-s)$ in all cases, and $\mu= \frac{1}{2}$. Assuming that $x_i \in \Omega_d$ and letting $\epsilon>0$ be arbitrary, Theorems \ref{Thm: theorateinf}, 
\ref{Thm: TheorateH^1}, and \ref{Thm: RieszGlobalLInfinity} respectively yield
\begin{equation} \label{eq: locsummary}
    \begin{aligned}
        \|\phi-\phi_h\|_{L_\infty(\Omega_d)} \ & \ \lesssim  d^{-n} \left  (  h^{2-\epsilon} + h^{4-2s} \right ),
        \\ \|\phi-\phi_h\|_{H^1(\Omega)} & \lesssim d^{-n/2} h^{1-\epsilon},
        \\  \|\phi-\phi_h\|_{L_\infty(\Omega)} &  \lesssim d^{-n/2} h^{2-\frac{n}{2}-\epsilon}. 
    \end{aligned}
\end{equation}
From \cite[Theorem 4.7]{binev2024optimal}, we have when $x_i \in \overline{\Omega}$ and $s>\frac{n-1}{2}$ that
\begin{equation} \label{eq: globsummary}
    \begin{aligned}
        \|\phi-\phi_h\|_{H^1(\Omega)} & \lesssim h^{s+\frac{1}{2}-\frac{n}{2}-\epsilon},
      \\  \|\phi-\phi_h\|_{L_\infty(\Omega)} & \lesssim h^{s+\frac{1}{2}-\frac{n}{2}-\epsilon}.
    \end{aligned}
\end{equation}

In Table \ref{tab: convsum} we summarize expected orders of convergence for $\|\phi-\phi_h\|_{L_\infty}$ using global and local error analysis (here we have included the endpoints $s=\frac{1}{2}, \frac{3}{2}$ for comparison as $s$ approaches them even though they are excluded from the analysis).  Note first that the order of convergence with respect to $h$ is better in all cases when assuming $x_i \in \Omega_d$; overlap only occurs as $s \rightarrow \frac{3}{2}$. Similarly, when estimating $\|\phi-\phi_h\|_{H^1(\Omega)}$ a comparable rate of convergence is obtained only when $n=2$ and as $s \rightarrow \frac{3}{2}$.  On the other hand, our analysis for $x_i \in \Omega_d$ compensates higher orders of convergence in $h$ with negative powers of $d$ that may be significant when $d$ is not sufficiently resolved by $h$. As indicated in our numerical experiments in Section \ref{Sec: NumericalResults}, while errors do in fact increase as $d \rightarrow 0$ our estimates are not necessarily optimal with respect to the stated powers of $d$.  Proving estimates with sharper powers of $d$ is potentially a very technical undertaking and is beyond the scope of this paper.  

Note as well that when $n=3$ and $\frac{1}{2}<s \le 1$, the estimates of \cite{binev2024solving} do not apply when $x_i \in \overline{\Omega}$.  In contrast, our estimates in Theorem \ref{Thm: RieszGlobalLInfinity} apply in this case so long as the elliptic problem defining $\psi$ possesses sufficient regularity to ensure $s+R(s,s+\mu)>1$.   However, even though the Riesz representers may be defined in this case the optimal recovery framework continues to apply only when $s>\frac{n-1}{2}$ due to Assumption \ref{ass:X}.

 \begin{center}
 \begin{table} 
 \caption{Summary of expected orders of convergence}
 \begin{tabular}{|c|c|c|c|c|} 
 \hline
   $s$ & $n$ &  $x_i \in \overline{\Omega}$, $\|\phi-\phi_{h}\|_{L_\infty(\Omega)}$ & $x_i \in \Omega_d$, $\|\phi-\phi_{h}\|_{L_\infty(\Omega)}$ & $x_i \in \Omega_d$, $\|\phi-\phi_{h}\|_{L_\infty(\Omega_d)}$ \\ \hline
 $\frac{1}{2}$ & 2 &  $h^0$ & $d^{-1} h^{1-\epsilon}$ & $d^{-2} h^{2-\epsilon}$ \\ \hline
 $1$ & 2 & $h^{\frac{1}{2}-\epsilon}$ & $d^{-1} h^{1-\epsilon}$ &$d^{-2} h^{2-\epsilon}$ \\ \hline
 $\frac{3}{2}$ & 2 & $h^{1-\epsilon}$ & $d^{-1} h^{1-\epsilon}$ & $d^{-2} h^{1} $ \\ \hline
 $\frac{1}{2}$ & 3 & n/a  & $d^{-3/2} h^{\frac{1}{2} -\epsilon}$ & $d^{-3} h^{2-\epsilon}$ \\ \hline
 1 & 3 & $h^0$ & $d^{-3/2} h^{\frac{1}{2} -\epsilon}$ & $d^{-3} h^{2-\epsilon}$ \\ \hline
 $\frac{3}{2}$ & 3 & $h^{\frac{1}{2}-\epsilon}$ & $d^{-3/2} h^{\frac{1}{2} -\epsilon}$ & $d^{-3} h^{1}$ \\ \hline
 \end{tabular} \label{tab: convsum}
 \end{table}
 \end{center}



\subsection{Smooth domains and higher order elements}\label{Subsec: HigherOrder}

In this section we address optimality of our estimates in the context of smooth domains and possible extension to higher-order elements.  

The convergence results in \eqref{eq: locsummary} and Table \ref{tab: convsum} are near-optimal for $\|\phi-\phi_h\|_{H^1(\Omega)}$ and $\|\phi-\phi_h\|_{L_\infty(\Omega_d)}$ when using linear elements.  However, the stated convergence rates of $h^{2-\frac{n}{2}-\epsilon}$ for $\|\phi-\phi_h\|_{L_\infty(\Omega)}$ are highly suboptimal when full regularity is present.  The suboptimality is due to the use of Sobolev embeddings to bound $L_\infty$ norms  by $L_2$-based Sobolev norms.  In order to obtain optimal estimates it is necessary to make sharper arguments both in bounding $\|(E-E_h)\psi_h \|_{L_\infty(\Omega)}$ as in \eqref{eq: linfstepone} and $\|\psi-\psi_h\|_{L_\infty(\Gamma)}$, which is an intermediate term in \eqref{eq: linfsteptwo}.  Below we assume that a degree-$\zeta$ ($\zeta \ge 1$) finite element space of Lagrange type is used and sketch a proof of optimal order $h^{k+1}$ error estimates in the case $s=1$.  Some steps also apply to the general case $\frac{1}{2}<s<\frac{3}{2}$, but we do not provide a full proof because critical $L_\infty$ error estimates on $\Gamma$ are only available when $s=1$.      

Let now $\Gamma$ be smooth.  Assume that $\Omega$ and $\Gamma$ are meshed exactly with compatible meshes, and let $V_h$ be a (parametric) finite element space of degree $\zeta$ defined on this mesh. Assume that $P_h: H^1(\Omega) \rightarrow V_h$ is a Scott-Zhang type interpolant which coincides with $\pi_h$ on $\Gamma$, as above.  Further assume that $P_h$ and $\pi_h$ possess standard properties even though $\Gamma$ is curved.  We shall also employ standard finite element error estimates without specific reference.  In summary, we work within an idealized setting which preserves standard error estimates and approximation properties and does not introduce any variational crimes due to approximation of $\Omega$ or $\Gamma$. 

We first compute using standard maximum norm stability results, the maximum principle, and $(P_h E\psi)|_\Gamma = \pi_h \psi$ that 
\begin{equation} \label{eq: main_steps}
    \begin{aligned}
\|& \phi-\phi_h\|_{L_\infty(\Omega)} \le \|(E-E_h)\psi_h\|_{L_\infty(\Omega)} + \|E(\psi-\psi_h)\|_{L_\infty(\Omega)} 
\\ & \le \|(E-E_h )(\psi_h - \pi_h \psi)\|_{L_\infty(\Omega)} + \|(E-E_h)\pi_h \psi\|_{L_\infty(\Omega)} + \|\psi-\psi_h \|_{L_\infty(\Gamma)}
\\ \ &\lesssim \ \ell_h\left [  \inf_{\chi_1 \in V_h, \chi_1|_\Gamma =\psi_h-\pi_h\psi} \|E(\psi_h-\pi_h \psi)-\chi_1\|_{L_\infty(\Omega)} +
\inf_{\chi_2 \in V_h, \chi_2|\Gamma = \pi_h \psi} \|E \pi_h \psi-\chi_2\|_{L_\infty(\Gamma)} \right ]
\\ & \quad +\|\psi-\psi_h\|_{L_\infty(\Gamma)}
\\ \ & \lesssim \ \ell_h \left [  \|\psi_h -\pi_h \psi\|_{L_\infty(\Gamma)} + \|E(\pi_h \psi-\psi)\|_{L_\infty(\Omega)} + \|E\psi-P_h E \psi\|_{L_\infty(\Omega)} \right ]+ \|\psi-\psi_h\|_{L_\infty(\Gamma)}
\\ \ & \lesssim \ \ell_h \left [  \|\psi-\psi_h \|_{L_\infty(\Gamma)} + \|\psi-\pi_h \psi \|_{L_\infty(\Gamma)} + h^{\zeta+1} \|E\psi\|_{W_\infty^{\zeta+1}(\Omega)}    
\right ].
\end{aligned}
\end{equation}
Here we let $\ell_h = \ln \frac{1}{h}$, and have chosen $\chi_1$ so that $\|\chi_1\|_{L_\infty(\Omega)} =\|\psi_h - \pi_h  \psi\|_{L_\infty(\Gamma)}$.

Sobolev embeddings and regularity of $E$ yield for any $\epsilon>0$
\begin{equation}
    \|E\psi\|_{W_\infty^{\zeta+1}(\Omega)} \ \lesssim \ \|E\psi\|_{H^{\zeta+1+\frac{n}{2}+\epsilon}(\Omega)} \ \lesssim \ \|\psi\|_{H^{\zeta+\frac{n+1}{2}+\epsilon}(\Gamma)}.
\end{equation}
Let $\tilde{\mu}(s,\zeta)$ be the smallest integer strictly larger than $\zeta+\frac{n}{2}-2s$.  Note that $\zeta \ge 1$, $n \ge 2$, and $s<\frac{3}{2}$ yield $\tilde{\mu}(s,\zeta) \ge 0$.  Recall that $\psi$ solves an elliptic problem over $H^s(\Gamma)$ with right hand side $\gamma_z$ with $\gamma_z(g)=Eg(z)$. Proposition \ref{prop: gammasmooth} and  \eqref{eq: gamma_reg} yield $\gamma_z \in H^{\zeta+\frac{n+1}{2}-2s}(\Gamma)$.  Because $\zeta+\frac{n+1}{2}-2s+\epsilon \le \tilde{\mu}(s,\zeta)+\frac{1}{2}$ for $\epsilon$ small enough, setting $\mu=\tilde{\mu}(s,\zeta)+\frac{1}{2}$ in \eqref{eq: gamma_reg} yields $\|\gamma_z\|_{H^{\zeta+\frac{n+1}{2}-2s+\epsilon}(\Gamma)} \lesssim d^{-n/2-\tilde{\mu}(s,\zeta)}$. Thus by a shift theorem, 
\begin{equation}
    \|E\psi \|_{W_\infty^{\zeta+1}(\Omega)} \ \lesssim \ \|\gamma_z\|_{H^{\zeta+\frac{n+1}{2}-2s+\epsilon}(\Gamma)} \lesssim d^{-n/2-\tilde{\mu}(s,\zeta)}.
\end{equation}
Similarly, 
\begin{equation}
\|\psi-\pi_h \psi\|_{L_\infty(\Gamma)} \ \lesssim \ h^{\zeta+1} \|\psi\|_{H^{\zeta+1+\frac{n}{2}+\epsilon}(\Gamma)} \lesssim h^{\zeta+1} d^{-n/2-\tilde{\mu}(s,\zeta)}.
\end{equation}

We are left to bound $\|\psi-\psi_h\|_{L_\infty(\Gamma)}$.  Such pointwise estimates are not known for fractional $s$, but when $s=1$ \cite[Theorem 3.2]{dem09} yields for finite element approximations for the Laplace-Beltrami operator $-\Delta_\Gamma$ that $\|\psi-\bar{\psi}_h\|_{L_\infty(\Gamma)} \lesssim \ell_h h^{\zeta+1} d^{-n/2-\tilde{\mu}(s,\zeta)}$; the same estimate can be shown to hold for the operator $-\Delta_\Gamma +I$ with modest modifications.  

We now bound $\|\bar{\psi}_h -\psi_h\|_{L_\infty(\Gamma)}$.  Let $x \in \Gamma$ satisfy $\|\bar{\psi}_h-\psi_h \|_{L_\infty(\Gamma)} = |(\bar{\psi}_h-\psi_h)(x)|$.
Let $G^x \in H^1(\Gamma)$ and $G_h^x \in \mathbb{T}_h$ be the discrete Green's function corresponding to $x \in\Gamma$ and its $H^1(\Gamma)$ Galerkin approximation, respectively.  In particular, $-\Delta_\Gamma G^x + G^x=\delta_{h,x}$, where $\delta_{h,x}$ is smooth, locally supported in the element containing $x$, and satisfies $\|\delta_{h,x}\|_{L_p(\Gamma)} \lesssim h^{-(n-1)+\frac{n-1}{p}}$.  Then
\begin{equation}\label{Eq: GalerkinDif}
(\bar{\psi}_h-\psi_h)(x) \ = \ \langle \bar{\psi}_h-\psi_h, G_h^x \rangle_{H^1(\Gamma)}\  = \ (E-E_h)G_h^x(z).
\end{equation}
Let $p=\frac{n}{n-1}$.  Local error estimates (cf. \cite{schatz1995interior}), a duality argument, and inverse estimates for harmonic functions yield that 
\begin{equation}\label{Eq: EEhGh}
\begin{aligned}
|(E-E_h)G_h^x(z)| & \  \lesssim \ \ell_h h^{\zeta+1} |E G_h^x|_{W_\infty^{\zeta+1}(B_d(z))} + d^{-n/p-\zeta+1} \|(E-E_h)G_h^x\|_{W_p^{-\zeta+1}(\Omega)} 
\\ & \ \lesssim \ \ell_h h^{\zeta+1} d^{-n/p-\zeta} \|E G_h^x \|_{W_p^1(\Omega)} + d^{-n/2-\zeta+1} h^{\zeta} \|(E-E_h)G_h^x\|_{W_p^1(\Omega)}.
\end{aligned} 
\end{equation}
Employing the existence of a bounded inverse to the trace map $T:W_p^1(\Omega) \rightarrow W_p^{1-\frac{1}{p}}(\Gamma)$ along with $W_p^1(\Omega)$ regularity of Poisson's problem, properties of discrete Green's functions (cf. \cite[Lemma 3.3]{dem09}), and a Sobolev embedding, we find that
\begin{equation}
\begin{aligned}
    \|EG_h^x\|_{W_p^1(\Omega)} \ \lesssim \ \|G_h^x\|_{W_p^{1-1/p}(\Gamma)} \ \lesssim \  \|G_h^x\|_{W_1^1(\Gamma)} \ \lesssim \ \|G_h^x-G^x\|_{W_1^1(\Gamma)}+ \|G^x\|_{W_1^2(\Gamma)} \ \lesssim \ \ell_h.
\end{aligned}
\end{equation}
Next we employ $W_p^1$ stability of the Ritz projection \cite[Theorem 8.5.3]{BS08}, $P_hv|_\Gamma = \pi_h (v_\Gamma)$, approximation properties of $\Pi_h$, and the existence of a bounded inverse to the trace map $T:W_p^1(\Omega) \rightarrow W_p^{1-\frac{1}{p}}(\Gamma)$ (and similarly for $T:W_p^2(\Omega) \rightarrow W_p^{2-\frac{1}{p}}(\Gamma) \times W_p^{1-\frac{1}{p}}(\Gamma)$) along with $W_p^1(\Omega)$ and $W_p^2(\Omega)$ regularity of Poisson's problem,   
Sobolev embeddings, and finally \cite[Lemma 3.3]{dem09} to compute
\begin{equation}
\begin{aligned}
\|(E-E_h)G_h^x\|_{W_p^1(\Omega)} & \le \|(E-E_h)(G_h^x-\pi_h G^x)\|_{W_p^1(\Omega)}+ \|(E-E_h)\pi_h G^x\|_{W_p^1(\Omega)}
\\ & \lesssim \|E(G_h^x-\pi_h G^x)\|_{W_p^1(\Gamma)} + \inf_{\chi \in \mathbb{V}_h, \chi|_\Gamma = \pi_h G^x} \|E \pi_h G_h^x-\chi\|_{W_p^1(\Omega)}
\\ & \lesssim \|E(G_h^x-\pi_h G^x)\|_{W_p^1(\Omega)}+\|E(\pi_h G^x-G^x)\|_{W_p^1(\Omega)} + \|EG^x-P_h EG^x\|_{W_p^1(\Omega)}
 \\ & \lesssim \|G^x-G_h^x\|_{W_p^{1-\frac{1}{p}}(\Gamma)} + \|G^x-\pi_h G^x\|_{W_p^{1-\frac{1}{p}}(\Gamma)} + h \|G^x\|_{W_p^{2-\frac{1}{p}}(\Gamma)}
 \\ & \lesssim \|G^x-G_h^x\|_{W_1^1(\Gamma)} + \|G^x-\pi_h G^x\|_{W_1^1(\Gamma)} + h \|G^x\|_{W_1^2(\Gamma)}
 \\ & \lesssim h \ell_h.
\end{aligned}
\end{equation}
Recalling that $p=\frac{n}{n-1}$ and using \eqref{Eq: GalerkinDif}, \eqref{Eq: EEhGh}, we have
$$\|\bar{\psi}_h-\psi_h\|_{L_\infty(\Gamma)} \lesssim h^{\zeta+1} \ell_h (d^{-n/p-\zeta}+d^{-\frac{n}{p}-\zeta+1})\lesssim  h^{\zeta+1} \ell_h d^{-(n-1)-\zeta}.$$  

Now recalling that $\tilde{\mu}(s,1)$ is the smallest integer larger than $\zeta+\frac{n}{2}-2s=\zeta-\frac{n}{2}-2$, we have $d^{-n/2-\tilde{\mu}(s,\zeta)} \le d^{-n-\zeta+1}$. Combining the previous inequalities thus yields the following proposition.
\begin{proposition}\label{Prop: SmootherCase}
Assume that $\Gamma$ is smooth, that $\mathbb{V}_h$ consists of degree-$\zeta$ Lagrange elements possessing standard approximation properties on a mesh that exactly fits $\Omega$ and $\Gamma$, that $d \ge c_0 h$ with $c_0$ sufficiently large, and that $s=1$.  Then
\begin{equation}
    \|\phi-\phi_h\|_{L_\infty(\Omega)} \ \lesssim \ h^{\zeta+1}(\ell_h)^2 d^{-n-\zeta+1}.
    \end{equation}
    \end{proposition}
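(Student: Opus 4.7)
The plan is to assemble the sequence of estimates built up in the preceding discussion into a single bound, controlling $\|\phi - \phi_h\|_{L_\infty(\Omega)}$ by separately treating the boundary error in $\psi$ and the bulk error in the harmonic extension. First I would use maximum-norm stability of the Ritz projection (which holds by \eqref{Eq: RitzStability} in the smooth-domain setting) together with the triangle inequality, inserting $P_h E\psi$ and $\pi_h \psi$ as in \eqref{eq: main_steps}, to reduce the global bound to three ingredients: the pointwise boundary error $\|\psi - \psi_h\|_{L_\infty(\Gamma)}$, the pointwise interpolation error $\|\psi - \pi_h \psi\|_{L_\infty(\Gamma)}$, and the bulk Lagrange-type approximation error $h^{\zeta+1} \|E\psi\|_{W_\infty^{\zeta+1}(\Omega)}$.

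Second, I would bound the last two of these using higher-regularity information on $\psi$. A Sobolev embedding $H^{\zeta+\frac{n+1}{2}+\epsilon} \hookrightarrow W_\infty^{\zeta+1}$ together with boundedness of $E$ on the smooth domain $\Omega$ (Proposition~\ref{Prop: EmbeddingRegularity}) reduces bounding $\|E\psi\|_{W_\infty^{\zeta+1}(\Omega)}$ to bounding $\|\psi\|_{H^{\zeta+\frac{n+1}{2}+\epsilon}(\Gamma)}$. Applying the shift estimate for $-\Delta_\Gamma + I$ with right-hand side $\gamma_z$, and then invoking Proposition~\ref{prop: gammasmooth} / \eqref{eq: gamma_reg} with $\mu = \tilde{\mu}(s,\zeta)+\frac{1}{2}$, where $\tilde{\mu}(s,\zeta)$ is the smallest integer strictly larger than $\zeta + \frac{n}{2} - 2s$, produces the factor $d^{-n/2 - \tilde{\mu}(s,\zeta)}$. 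The interpolation error $\|\psi - \pi_h \psi\|_{L_\infty(\Gamma)}$ is handled identically, picking up an extra $h^{\zeta+1}$.

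Third, and this is where the main technical difficulty lies, I would estimate the pointwise boundary error $\|\psi - \psi_h\|_{L_\infty(\Gamma)}$. Here $\psi_h$ is \emph{not} the $H^1(\Gamma)$-Galerkin projection $\bar{\psi}_h$ of $\psi$, so I would split $\|\psi - \psi_h\|_{L_\infty(\Gamma)} \le \|\psi - \bar{\psi}_h\|_{L_\infty(\Gamma)} + \|\bar{\psi}_h - \psi_h\|_{L_\infty(\Gamma)}$. The first term is immediately controlled by the pointwise finite element estimate for the Laplace--Beltrami type operator on smooth surfaces \cite{dem09}, adapted to $-\Delta_\Gamma + I$, yielding the optimal $\ell_h h^{\zeta+1} d^{-n/2-\tilde{\mu}(s,\zeta)}$. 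For the second term, the key observation is the discrete Green's function identity $(\bar{\psi}_h - \psi_h)(x) = (E - E_h)G_h^x(z)$, reducing the boundary discrepancy at $x$ to the pointwise interior harmonic-extension error at $z$; I would then apply interior local pointwise error estimates (Schatz--Wahlbin) together with a $W_p^{-\zeta+1}$ duality bound, using $W_p^1$-stability of the Ritz projection and standard $W_1^2(\Gamma)$ regularity of the continuous Green's function to convert everything back to $h^{\zeta+1}\ell_h d^{-(n-1)-\zeta}$.

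Finally, I would assemble the three contributions and observe that by construction of $\tilde{\mu}(s,\zeta)$, the inequality $d^{-n/2 - \tilde{\mu}(s,\zeta)} \le d^{-n - \zeta + 1}$ holds (this is where the $d$-dependence is unified), while two independent applications of logarithmic stability produce the $\ell_h^2$ factor. The hardest step is the Green's-function argument for $\|\bar{\psi}_h - \psi_h\|_{L_\infty(\Gamma)}$: the challenge is that one must simultaneously gain the full $h^{\zeta+1}$ order from the interior and control the $W_p^1(\Omega)$ norm of $(E - E_h)G_h^x$ in terms of negative powers of $h$ times the (mildly singular) $W_p^{1-1/p}(\Gamma)$ norms of $G_h^x - G^x$ and $G^x - \pi_h G^x$; the endpoint $p = n/(n-1)$ is precisely what makes the $W_1^1(\Gamma)$ and $W_1^2(\Gamma)$ estimates for continuous and discrete Green's functions on smooth surfaces directly applicable.
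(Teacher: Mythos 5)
Your proposal reproduces the paper's own argument essentially verbatim: the same insertion of $P_hE\psi$ and $\pi_h\psi$ with logarithmic maximum-norm stability, the same shift/Sobolev-embedding treatment of $\|E\psi\|_{W_\infty^{\zeta+1}(\Omega)}$ and $\|\psi-\pi_h\psi\|_{L_\infty(\Gamma)}$ via $\gamma_z$ and $\tilde\mu(s,\zeta)$, the same split $\psi-\bar\psi_h$ (handled by the pointwise surface FEM estimate of \cite{dem09}) plus $\bar\psi_h-\psi_h$ (handled by the discrete Green's function identity, interior Schatz--Wahlbin estimates, $W_p^{-\zeta+1}$ duality with $p=n/(n-1)$, and $W_p^1$ Ritz stability), and the same final consolidation of the $d$-powers via $d^{-n/2-\tilde\mu(s,\zeta)}\le d^{-n-\zeta+1}$. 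This is correct and takes the same route as the paper, so no further comparison is needed.
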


We do not give a proof outline for similar optimal-order estimates for $|(\phi-\phi_h)(z)|$ or $\|\phi-\phi_h\|_{H^1(\Omega)}$ in the context of smooth domains and higher order elements.  The critical modifications are however similar.  In particular, note the steps taken in \eqref{eq: main_steps} to ensure that the smooth function $E \psi$ is approximated on $\Omega$ instead of the rougher function $E\psi_h$ as in the previous proofs.  These steps are unnecessary when $\zeta=1$ and can complicate the final estimates on polyhedral domains where regularity gains are limited, but in the context of higher order elements they are needed to ensure interpolants are only applied to functions having maximum smoothness.

\section{Numerical results}\label{Sec: NumericalResults}

\subsection{Saddle point formulation}\label{Subsec: Saddle}


While \eqref{e:psi_h} is the most streamlined way of posing our finite element approximation of the Riesz representers, our implementation instead proceeds by solving an equivalent saddle point problem. To pose this problem, we define
\begin{equation}\label{Eq: Xs}
    X^s(\Om)\ := \ \{v \in H^1(\Om) \ | \ v_{\Gamma} \in H^s(\Gamma)\}
\end{equation}
which is a Hilbert space when equipped with the norm
\begin{equation}\label{Eq: XsNorm}
    \|v\|_{X^s(\Om)} \ := \ (\|v_{\Gamma}\|^2_{H^s(\Gamma)} + \|\grad v\|^2_{L_2(\Om; \R^n)})^{\frac{1}{2}}.
\end{equation}
Then solving \eqref{Eq: hsvargamma} is equivalent to finding the solution $(\phi, \pi) \in X^s(\Om) \times H^1_0(\Om)$ of the saddle point problem
\begin{eqnarray}\label{Eq: saddlePointGeneric}
    \begin{aligned}
    \lang \phi_{\Gamma}, v_{\Gamma}\rang_{H^s(\Gamma)} + \lang \grad v, \grad \pi\rang_{L_2(\Om; \R^n)} \ &= \ \nu(v), \ v \in X^s(\Om) \\
    \lang \grad \phi, \grad z\rang_{L_2(\Om; \R^n)} \ &= \ 0, \ \ \ \ \ z \in H^1_0(\Om).
    \end{aligned}
\end{eqnarray}
The well-posedness of \eqref{Eq: saddlePointGeneric} (and the corresponding discretized form) are described in Subsection 4.2 of \cite{binev2024solving}. As is also discussed there, solving \eqref{e:psi_h} requires computing a discrete harmonic extension of every basis function for the load vector and then solving a separate linear system for each extension. Meanwhile, numerically approximating \eqref{Eq: saddlePointGeneric} allows us to enforce discrete harmonicity and Riesz representer adherence sequentially, substantially reducing computation time.

For the numerical approximation of \eqref{Eq: saddlePointGeneric} when $s$ is nonintegral we refer to the algorithm proposed and analyzed in \cite{bonito2024approximating} in the context of optimal recovery. It is based on the Balakrishnan representation of the solution to fractional diffusion problems as originally proposed for Euclidean domains in \cite{bonito2019sinc, bonito2015numerical,BP17} and extended to fractional diffusion problems on surfaces in \cite{bonito2021approximation}. However, the numerical experiments proposed below are for $s=1$, which is sufficient to illustrate the results obtained in this paper.

\subsection{Experimental data}\label{Subsec: Numerics}

We will provide three different sets of numerical results on open bounded domains $\Om \subset \R^2$ with $s = 1$ and $f = 0$ (which means that $u_0 = 0$). In each setting discussed below, we consider sequences of uniformly refined sub-divisions $\{\mathcal T_k\}_{k \in \mathbb N_*}$. Therefore, rather than indexing the finite element quantities by the associated meshsize $h \approx 2^{-(k+1)}$, we use the iteration number $k$. Furthermore, the numerical simulations presented below use the deal.ii library \cite{bangerth2007deal}, which is mainly designed for sub-divisions made of quadrilaterals. Consequently, for a sub-division $\mathcal T_k$ of $\Omega$ made of (possibly curved) quadrilaterals, the finite element space is 
\begin{equation}\label{Eq: specificVH}
    \mathbb{V}_k \ := \ \{v \in C^0(\overline{\Om}) \ | \ v|_T \circ F_T \in \mathbb{Q}_\zeta, \ \fa T \in \mathcal{T}_k\},
\end{equation}
for some $\zeta \in \N^+$, where $\mathbb{Q}_\zeta$ denotes the space of continuous piecewise degree-$\zeta$ polynomials in each coordinate directions and $F_T: [0,1]^2 \rightarrow  T$ is the mapping from the reference element to the physical element $T\in \mathcal T_k$.
Although the theory presented above does not strictly apply to quadrilaterals, we expect it to extend in this setting as well. Throughout this section $\Om_d = \{x \in \Om \ | \ \dist(x, \Gamma) > d\}$, for a distance $d$ that is prescribed by the configuration of measurements, the minimal distance from a measurement point to $\Gamma$.

Several general comments are in order. First, we focus on reporting errors for the Riesz representers since our central theoretical results are the quantitative error estimates of Theorems \ref{Thm: theorateinf}, 
\ref{Thm: TheorateH^1}, and \ref{Thm: RieszGlobalLInfinity}, though we also include numerical validation of Corollaries \ref{Cor: quantEstPointwiseRec} and \ref{Cor: QuantConvLInf} for a square domain. Second, since analytical formulas for Riesz representers $\phi$ cannot be found, we report $\|\phi_K - \phi_k\|$, where $K > k$ and treat $\phi_K$ as a numerical surrogate for $\phi$ when $K$ is large. 


\subsection{Numerical results for Riesz representers}\label{Subsec: riesz}

\subsubsection{Interior measurement points on domains of varying smoothness}

We first let $\Om = B(0, 1)$ be the open unit disc and let $\mathcal{T}_k$, $k=1,...,9$, be a sequence of transfinite meshes exactly subdividing $\Om$. The initial sub-division $\mathcal T_1$ consists of $4$ uniform squares whose boundary is curved to match $\Gamma$. Subsequent sub-divisions are obtained by uniform refinements, always mapping the boundary edges to $\Gamma$ and using a transfinite interpolation to determine the location of the interior vertices. The final sub-division $\mathcal T_9$ is made of $1310720$ cells. The circle, having a smooth boundary, allows us to demonstrate optimal convergence rates (up to logarithmic factors) of $O(h^{\zeta+1})$ in $L_\infty$ and $O(h^\zeta)$ in $H^1$ when using polynomial of degree 1 or 2 in each coordinate direction, thus showcasing the flexibility of Proposition \ref{Prop: SmootherCase}. Overrefinement data for the numerical approximation of the Riesz representer is shown for both cases in Figure \ref{Fig: CircleStandard}. 

\begin{figure}
\centering
\includegraphics[scale = 0.28]{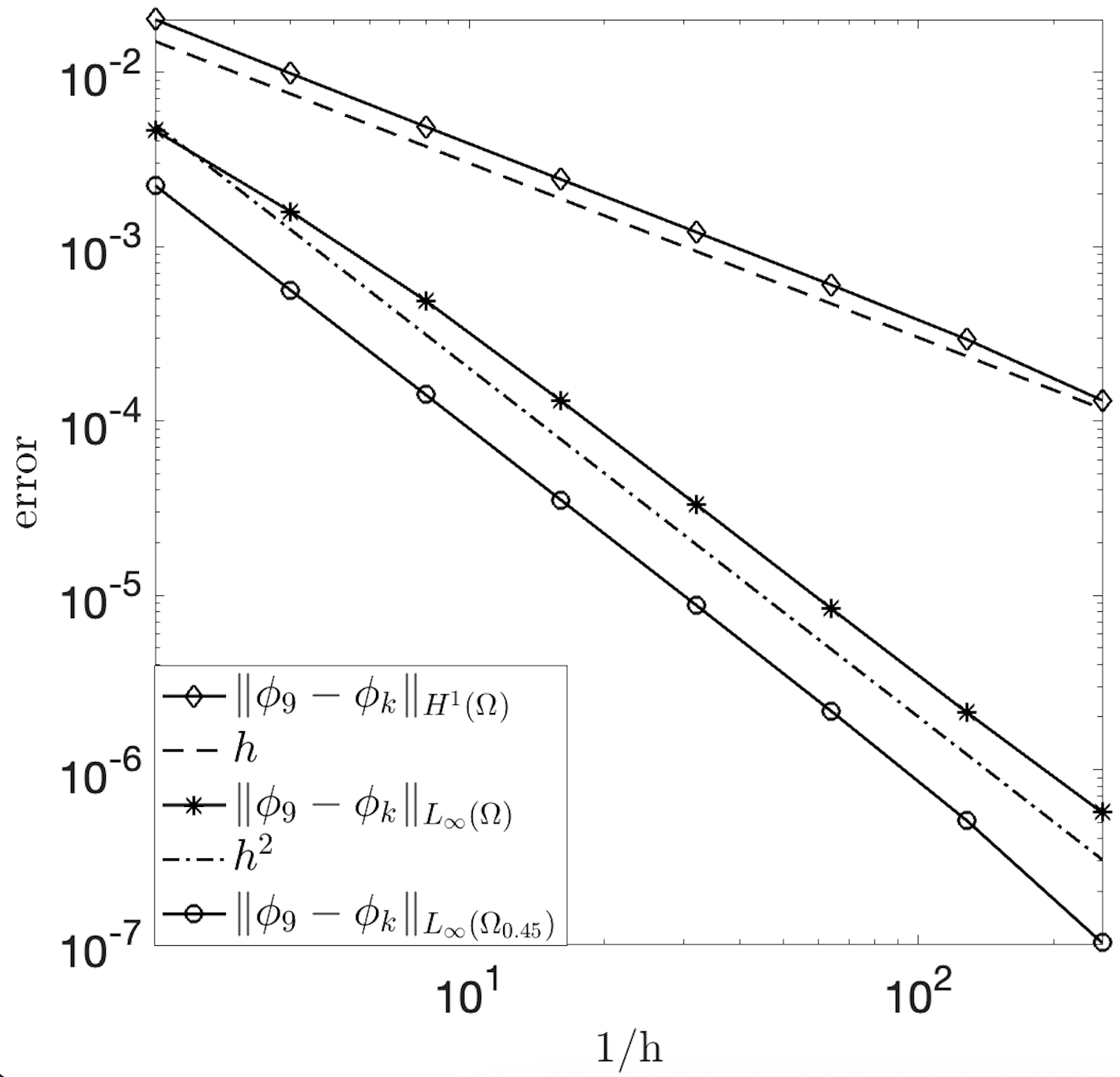}
\includegraphics[scale = 0.28]{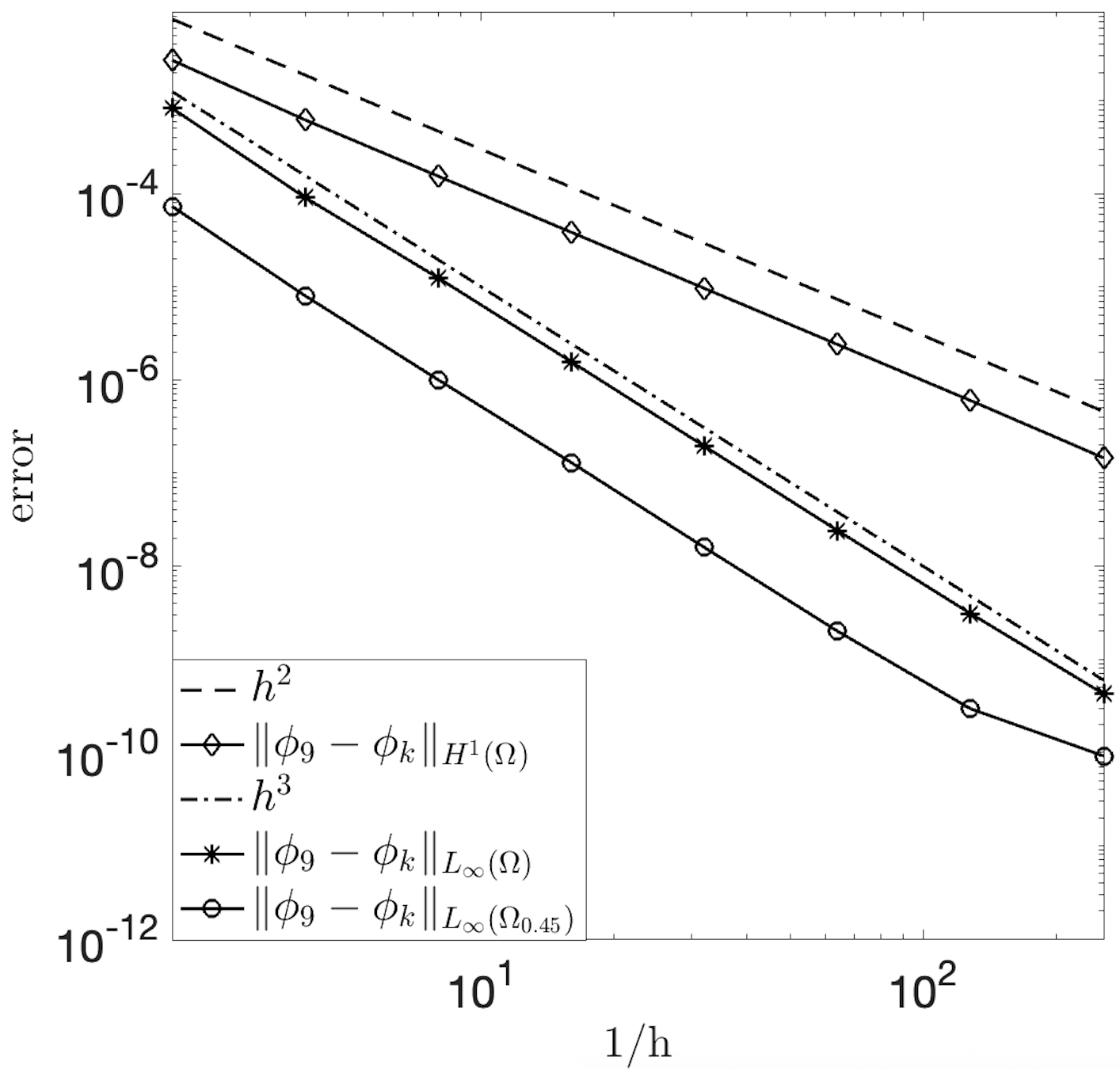}
\caption{Overrefinement errors $\| \phi_9 - \phi_k\|_{X}$ using polynomial degree $\zeta=1$ (left) and $\zeta=2$ (right) for the Riesz Representer $\phi \approx \phi_9$ associated with the measurement location $(0.5, 0.5)$ on the disc $\Om = B(0, 1)$.}
\label{Fig: CircleStandard}
\end{figure}

Next we perform a similar experiments but on a less smooth domain $\Om = (0, 1)^2$ with the same sub-divisions as before except that no curved elements are necessary. 
The finer sub-division $\mathcal T_{10}$ has  $16777216$ square cells.
 As depicted in Figure~\ref{Fig: SquareP1P2}, optimal order is achieved in all norms when  $\zeta=1$, whereas the order of convergence in $L_\infty(\Omega)$ is only 2 when $\zeta=2$, indicating limited regularity of the Riesz representer due to the limited smoothness of $\Gamma$. 

 To explore further the influence of the domain smoothness on the rates of convergence, 
  we also consider a $L$-shaped domain $\Om = (0, 1)^2\setminus((0, 1) \times (-1, 0))$. The finest sub-division $\mathcal{T}_9$ has $3145728$ square cells. On such a domain, we expect a singularity of the form $\rho^{2/3}$, where $\rho$ is the distance to the reentrant corner. This limit the convergence rates to $h^{2/3}$ in $L_\infty(\Omega)$ and $H^1(\Omega)$, and to $h^{4/3}$ in $L_\infty(\Omega_d)$. Although the rates reported in Figure~\ref{Fig: Lshape} are slightly better, they follow the expected trend. For this experiment, the initial sub-division consists of $3$ uniform square cells and the subsequent sub-divisions are obtained by uniform refinement. 
  

\begin{figure}
\centering
\includegraphics[scale = 0.28]{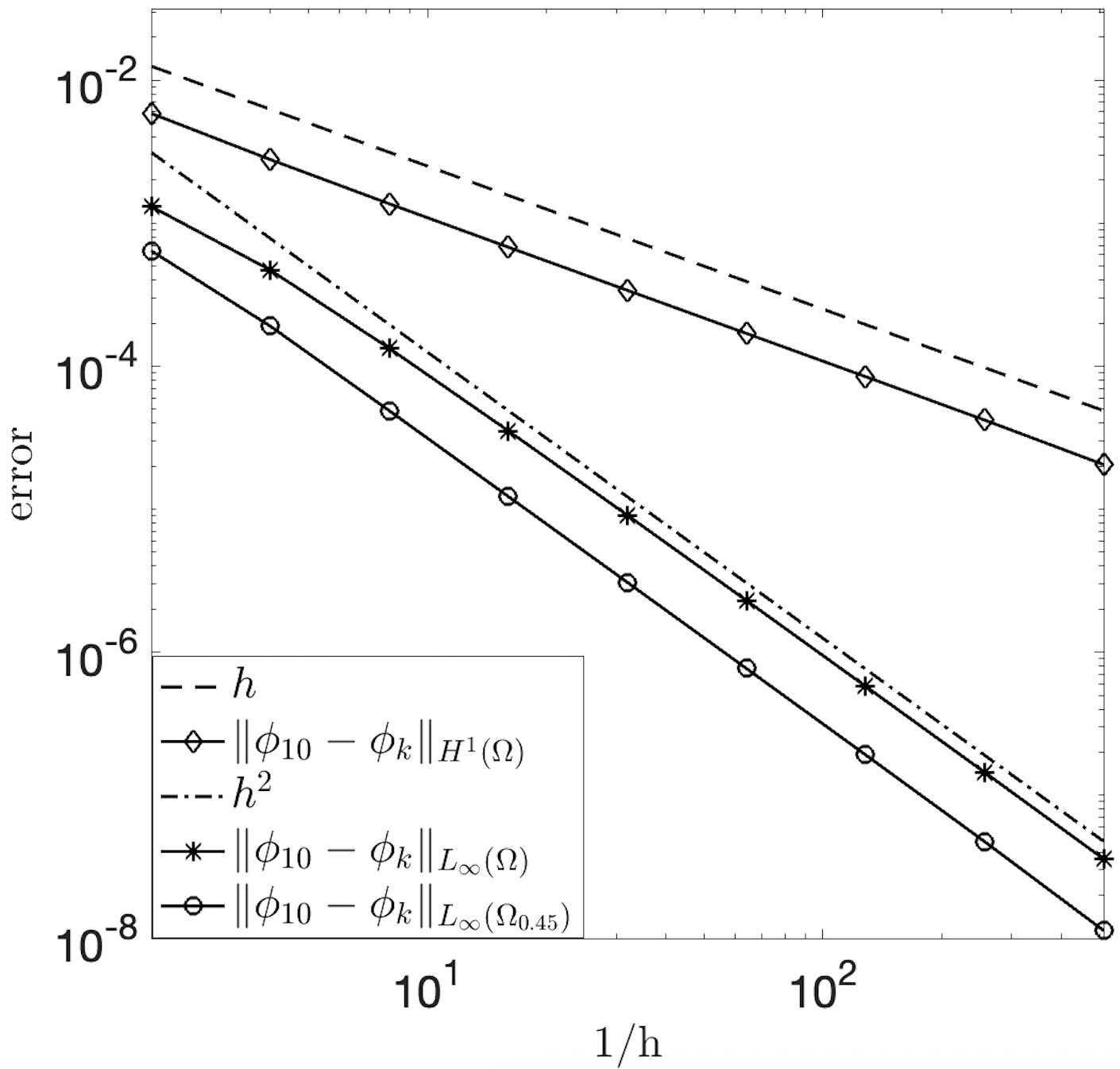}
\includegraphics[scale = 0.28]{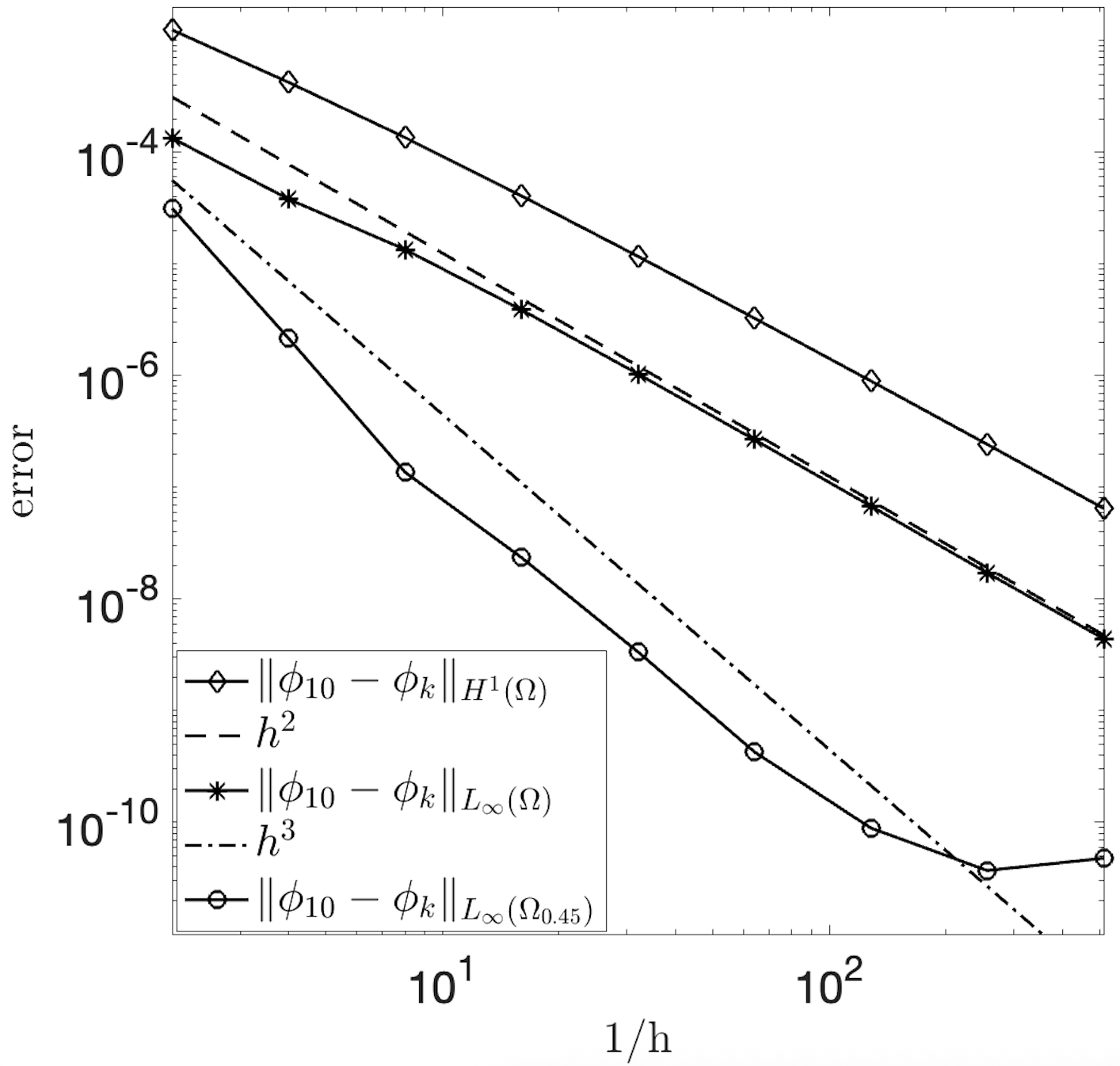}
\caption{Overrefinement errors $\| \phi_{10} - \phi_k\|_{X}$ using polynomial degree $\zeta=1$ (left) and $\zeta=2$ (right) for the Riesz Representer $\phi \approx \phi_{10}$ associated with the measurement location $(0.5, 0.5)$ on the disc $\Om = B(0, 1)$.}
\label{Fig: SquareP1P2}
\end{figure}

\begin{figure}
\centering
\includegraphics[scale = 0.28]{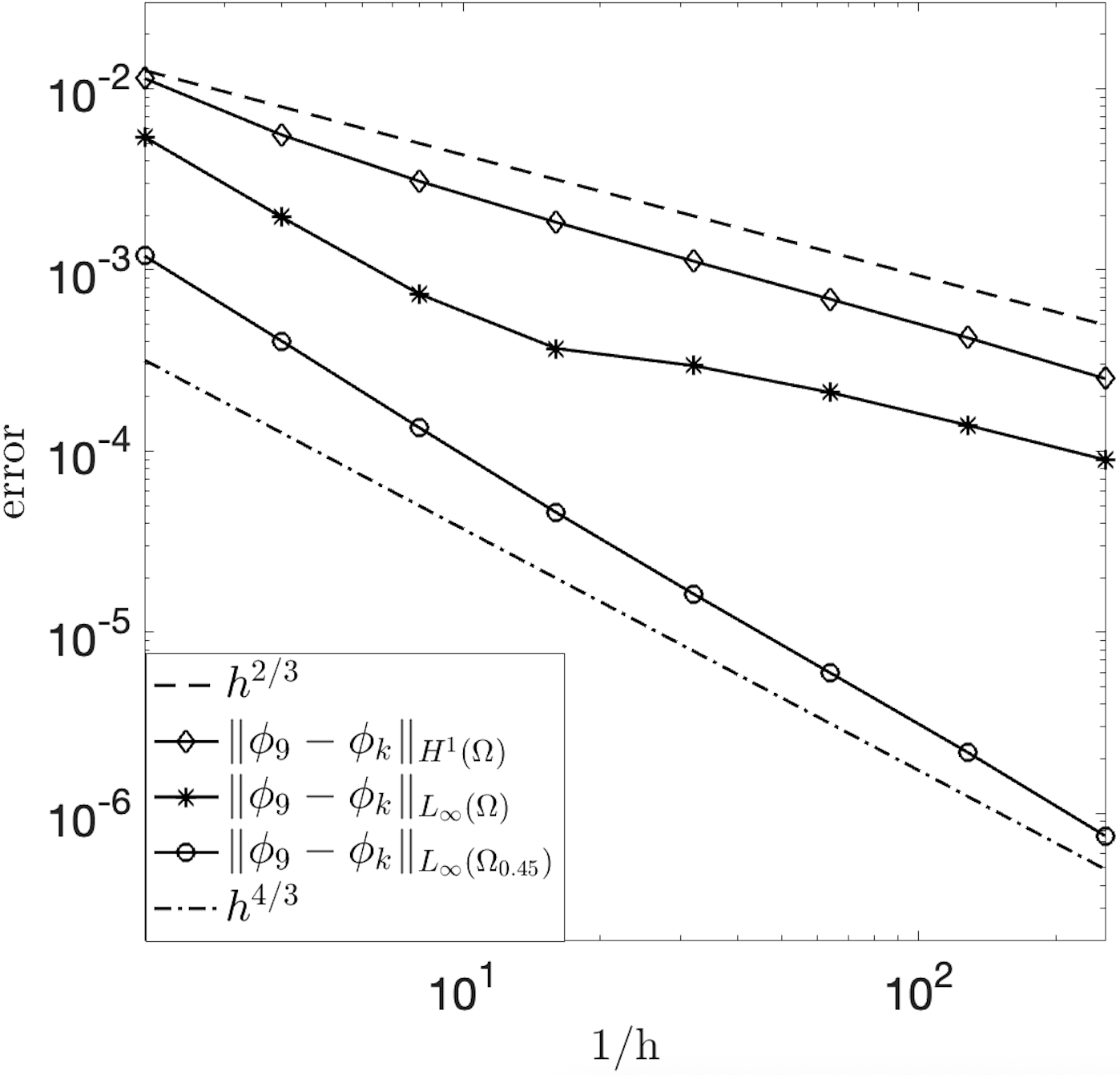}
\caption{Overrefinement errors $\| \phi_{9}-\phi_k\|_X$ using polynomials of degree $\zeta=1$ for the Riesz representer  $\phi \approx \phi_9$ associated with the measurement location $(-0.47, 0.47)$ on the 2D L-shaped domain $\Om = (0, 1)^2\setminus ((0, 1) \times (-1, 0))$.}
\label{Fig: Lshape}
\end{figure}

\subsubsection{Measurement points near or on the boundary}

We first show how the Riesz representer error deteriorates as $d \rightarrow 0^+$ (with fixed mesh size) using polynomials of degree $1$ and $2$ on the unit disc. The sub-divisions of the unit disc are obtained as in the previous section.  In Figure \ref{Fig: d->0} we depict the error $\| \phi_9 - \phi_8\|_X$ when the measurement locations are successively chosen to be $(0.9, 0)$, $(0.95, 0)$, $(0.975, 0)$, and $(0.9875, 0)$.  While clear deterioration rates appear in both cases, the $H^1(\Om)$ and $L^{\infty}(\Om)$ rates for $\mathbb{Q}_1$ elements  from \eqref{eq: locsummary} and \eqref{Prop: SmootherCase} ($d^{-1}$ and $d^{-2}$, respectively), along with the $L_{\infty}(\Om)$ rate for $\mathbb{V}_2$ from Proposition \ref{Prop: SmootherCase} ($d^{-3}$), appear to be suboptimal.  Proof of sharp rates of deterioration with respect to $d$ would involve highly technical and domain-dependent arguments and is not pursued further.   On the other hand, as predicted by Proposition \ref{Prop: SmootherCase}, we do observe a higher rate of deterioration in the maximum norm when $\zeta=2$ versus when $\zeta=1$ (roughly $O(d^{-2})$ versus $O(d^{-4/3})$, respectively).  

\begin{figure}
\centering
\includegraphics[scale = 0.28]{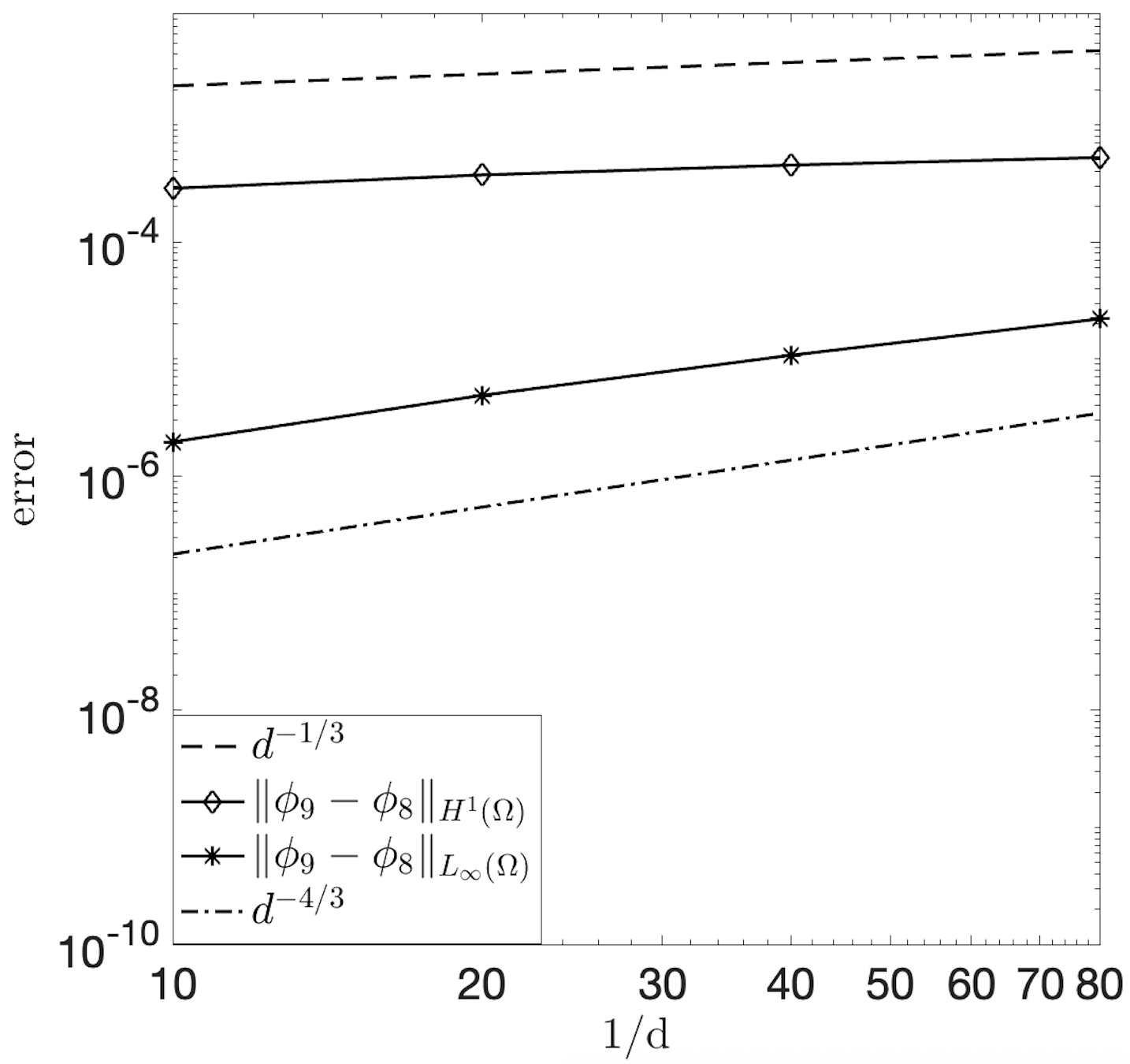}
\includegraphics[scale = 0.28]{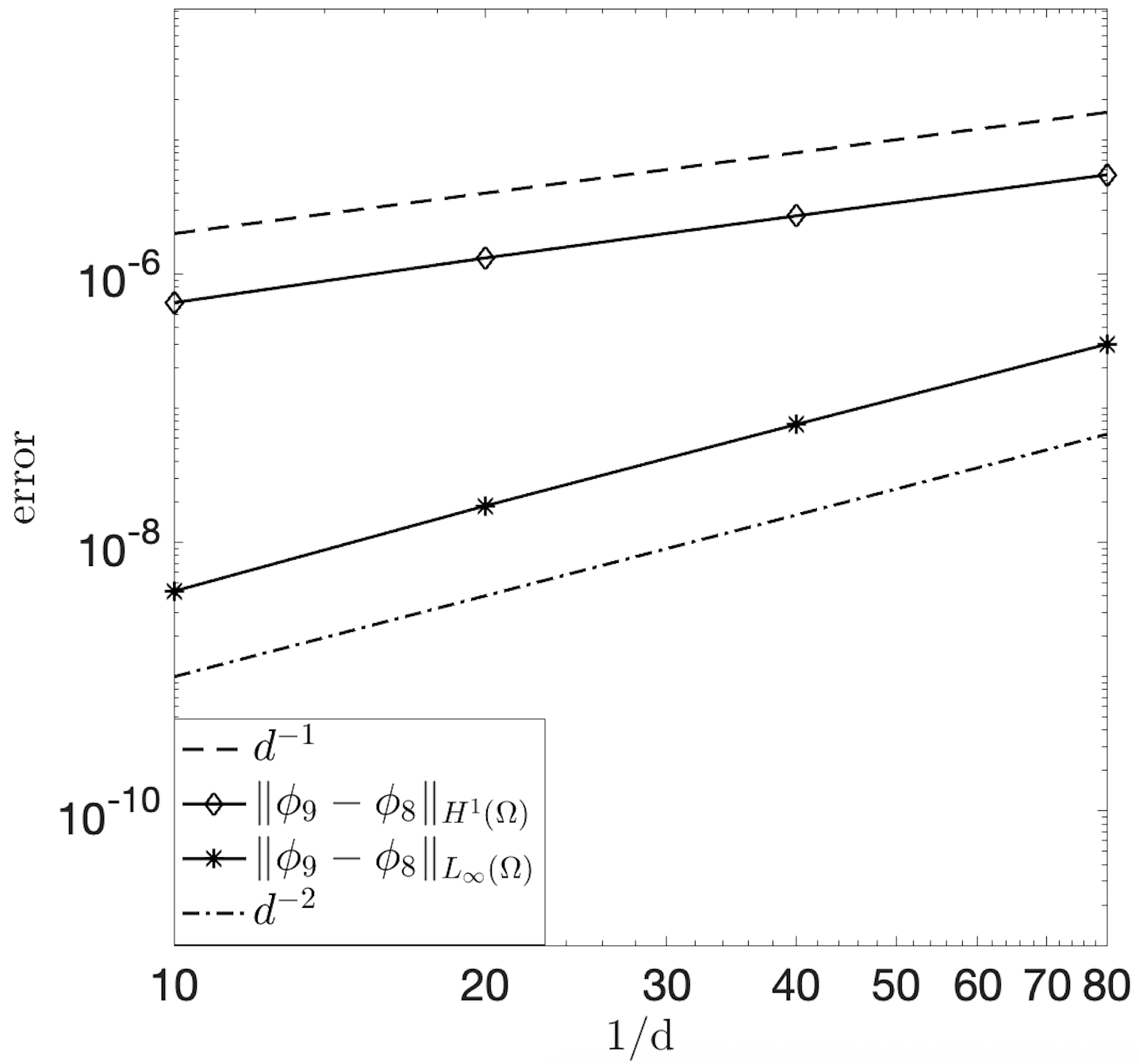}
\caption{Overrefinement errors $\|\phi_9 - \phi_8\|_{X}$ using polynomials of degree $\zeta=1$ (left) and $\zeta=2$ (right) with  $X=H^1(\Om)$ and $X=L_{\infty}(\Om)$ and $\Om = B(0, 1)$ as the measurement location approaches the boundary ($d \rightarrow 0^+$).}
\label{Fig: d->0}
\end{figure}

  We also consider a case where the measurement location is on the boundary of the square. We observe (see Figure \ref{Fig: SqOnBoundary}) that the errors are of order $O(h)$ when measured  in $L_\infty(\Omega)$ and in $H^1(\Omega)$, even when using $\mathbb{Q}_2$ elements. Instead, we observe an order of $O(h^{\zeta+1})$ ($\zeta=1,2$) when measured in $L_\infty(\Omega_d)$. Note that when $x_i \in \Gamma$, the boundary data $\psi_i$ for the Riesz representer is a Green's function on $\Gamma$.  The Green's function in one space dimension is merely Lipschitz (in fact piecewise linear for simple two-point boundary value problems). The reduced convergence rate in $L_\infty(\Omega)$, and in $H^1(\Omega)$ when $\zeta=2$, reflects this reduced regularity. 
 
 \begin{figure}
 \centering
 \includegraphics[scale = 0.28]{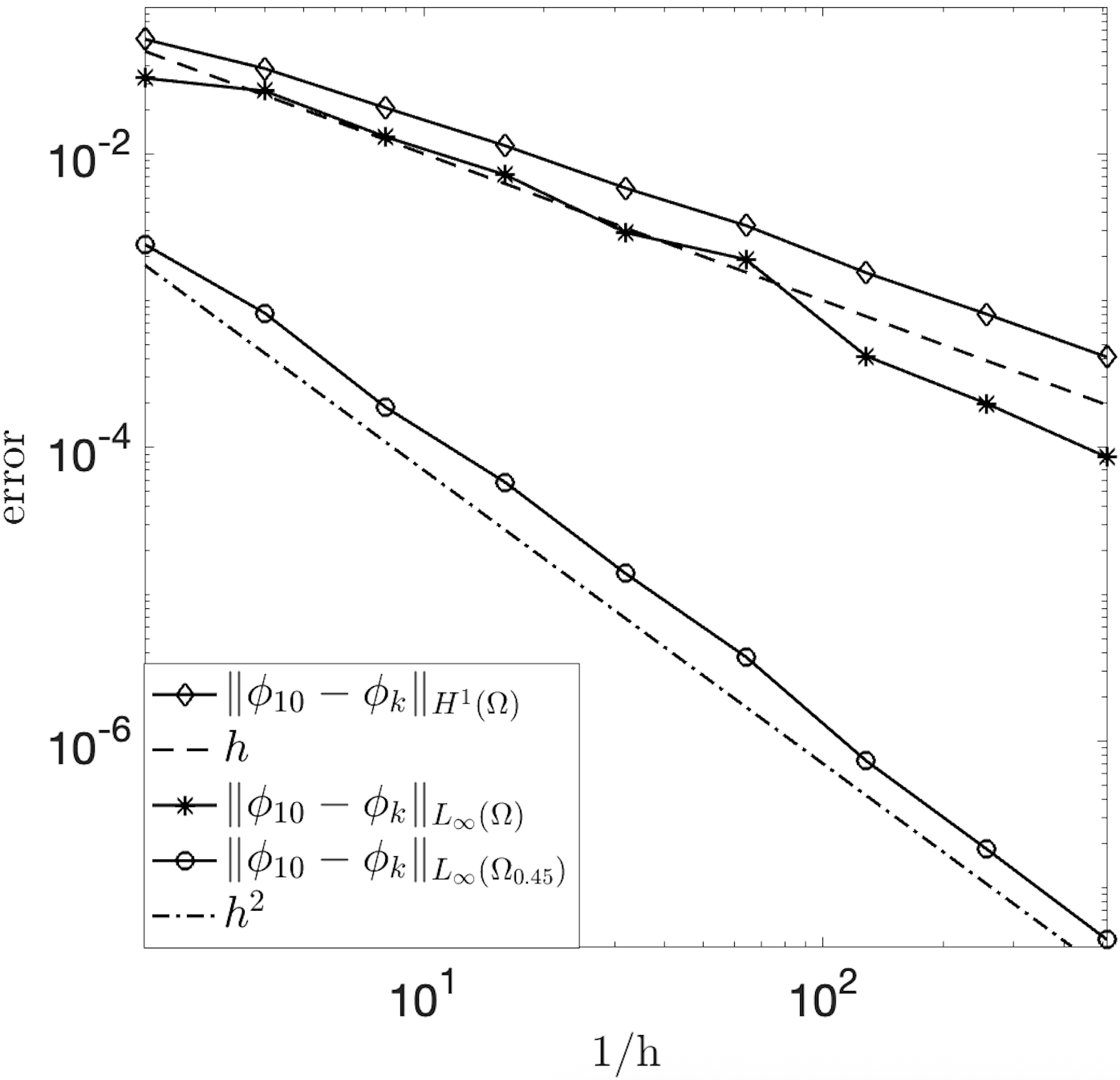}
 \includegraphics[scale = 0.28]{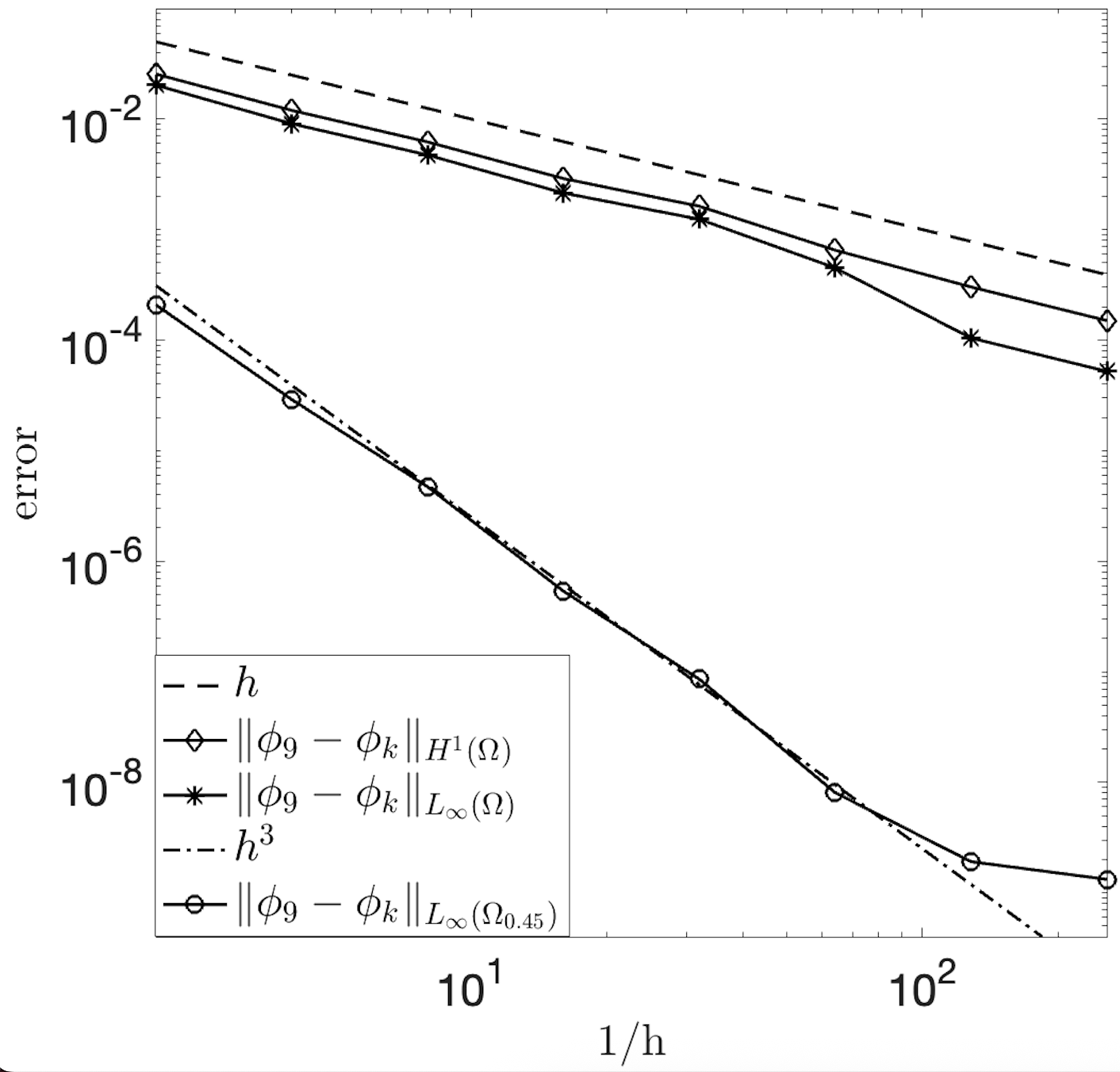}
 \caption{Overrefinement errors $\|\phi_{10}-\phi_k\|$ using polynomials of degree $\zeta=1$ (left) and $\zeta=2$ (right) for the Riesz representer $\phi \approx \phi_{10}$ associated with the measurement location  $(0, \sqrt{2}/2)\in \Gamma$.} \label{Fig: SqOnBoundary}
 \end{figure}



\subsection{Recovery approximation}\label{Subsec: recApprox}
To illustrate the recovery error, we choose the harmonic function
\begin{equation}\label{Ex: Harmonic}
    u_{\mathcal{H}}(x, y) \ := \ e^x\cos(y), \ (x, y) \in \Om=(0,1)^2
\end{equation}
as the function from which the measurements are drawn.  Data for the difference between $u_{\mathcal{H}}$ and the numerical approximation of the minimal norm interpolant are provided for numbers of measurement points $m = 4,16,36,64$.  The measurement points were placed in an \textit{interior box formation} consisting of measurement locations $(0.9, (i + 1)/17)$, $(0.1, (i + 1)/17)$, $((i + 1)/17, 0.1)$, and $((i + 1)/17, 0.9)$ ($i = 0$ when $m = 4$; $0 \le i \le 4$ when $m=16$ measurements; $0 \leq i \leq 8$ when $m = 36$ measurements; and $0 \leq i \leq 15$ when $m = 64$ measurements).  Computations were also performed using a \textit{grid formation} with measurement locations uniformly distributed in the interior of the unit square as in \cite{binev2024solving, bonito2024approximating}. The results  were very similar using this formation, so we do not report them here.  Also, we measured the recovery errors in the $H^1(\Omega)$, $L_\infty(\Omega)$, and $L_\infty(\Omega_{0.45})$ norms for a sequence of uniform refinements of $\mathcal T_0$ made of 4 uniform squares.  The errors in the $H^1$ norm was very similar to those reported below for the $L_\infty(\Omega)$ norm so we only report results for $L_\infty(\Omega)$ and $L_\infty(\Omega_{0.45})$ in Figure \ref{Fig: LinfPtwsBox} below.

\begin{figure}
\centering

\includegraphics[scale = 0.28]{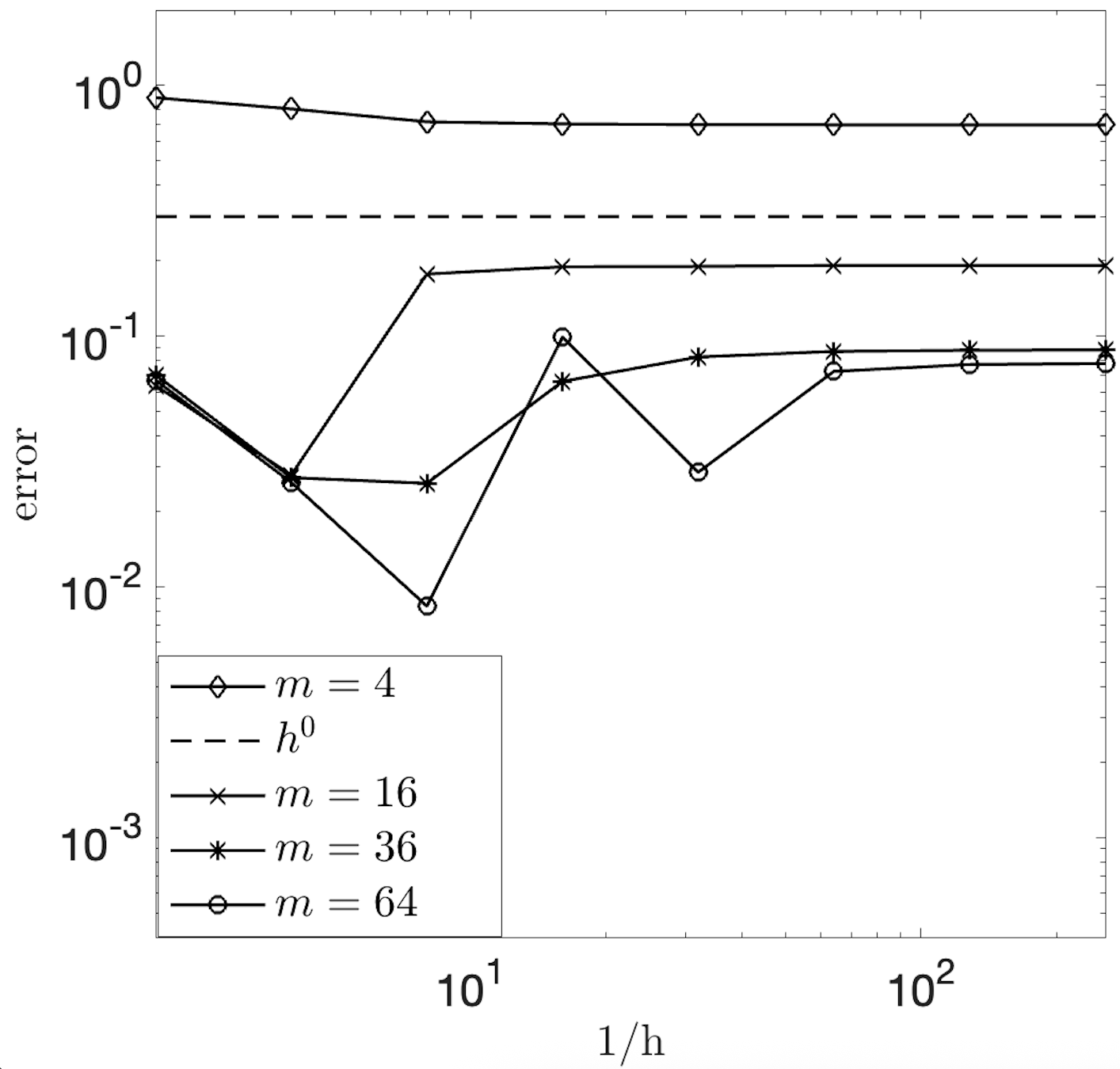}
\includegraphics[scale = 0.28]{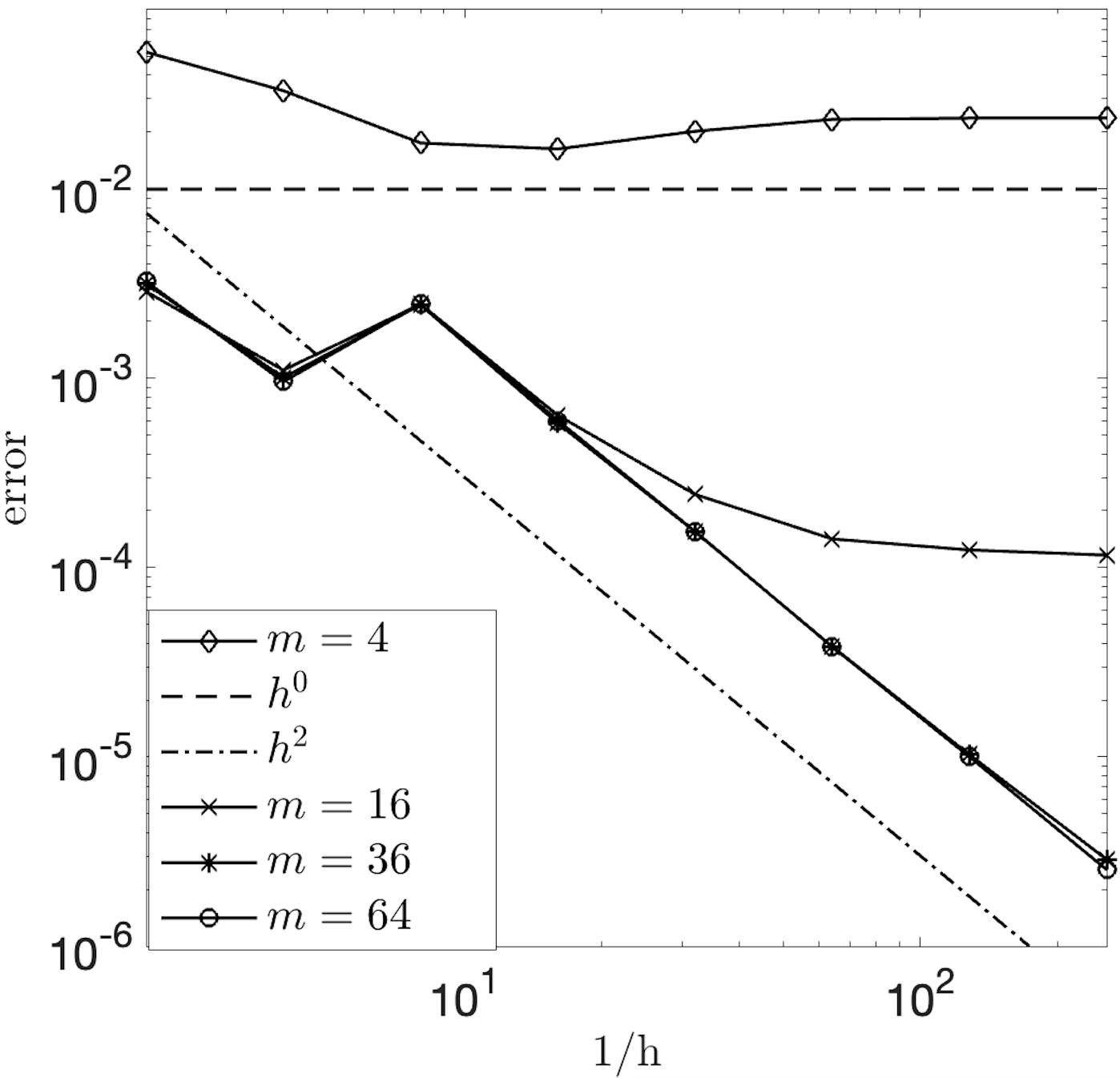}
\caption{Errors $\| u_{\mathcal H} - u_k^*\|_X$  on the square $\Om = (0, 1)^2$ using $m=4,16,36,64$ measurements in the interior box formation  for $X=L_{\infty}(\Om)$ (left) or $X=L_{\infty}(\Om_{0.45})$  (right).}
\label{Fig: LinfPtwsBox}
\end{figure}


 Note that the finite element recovery $u_h^*$ is proven to approximate the minimal norm interpolant associated with $\mathcal K_{\om}$. The latter may not be $u_{\mathcal H}$ but is at a distance at most $R(\mathcal K_{\om})_X$ to $u_{\mathcal H}$.  Nevertheless, when measuring the error in global norms ($H^1(\Omega)$ and $L_\infty(\Omega)$), the finite element recovery appears to approximate $u_{\mathcal H}$ well even on relatively coarse meshes, as reflected in the fact that the observed error changes little as the mesh is refined.  However, the observed recovery error does decrease as the number of measurement points increases, as made clear in Figure \ref{Fig: LinfPtwsBox}, which shows the recovery error in the $L_{\infty}(\Om)$ and $L_{\infty}(\Om_{0.45})$ norms. 
 

When measuring the error in $L_\infty(\Omega_d)$, it appears that the corresponding Chebychev radius is around $2 \times 10^{-2}$ when employing $4$ measurement points, about $10^{-4}$ when employing $16$ measurement points, and unclear when using 36 and 64 measurement points as the error continues to decrease with optimal order $h^2$ for the range of $h$ values tested.  This indicates that the Chebyshev radius may be much smaller when measuring interior errors as compared with global errors.  

We finally make a note on algorithmic implementation.  The near-optimal recovery theorem (Theorem~\ref{Thm: NORB}) assumes that the matrix $G$ is invertible. Depending on the location of the measurements, $G$ might be singular or nearly singular. Moreover, it is expected that the condition number of $G$ increases as $m$ increases. This leads to numerical difficulties as reported in \cite{binev2024solving,bonito2024approximating}. In the following, we follow the strategy proposed in \cite{binev2024solving}, which consists of applying a thresholding that eliminates the small eigenvalues of $G$. We refer to \cite{adcock2019frames} for a review and analysis of this regularization strategy. Empirically, we observe that the amount of thresholding necessary to obtain robust recovery can be decreased as the finite element resolution increases but in order not to obfuscate the purpose of the numerical simulation provided below, we fix the thresholding to $10^{-14}$. This means that the singular values of $G$ that are smaller than $10^{-14}\lambda_{\textrm{max}}$ are discarded when computing the Moore-Penrose pseudo inverse of $G$. Here $\lambda_{\textrm{max}}$ is the largest singular value of $G$.

\bibliographystyle{amsplain} 
\bibliography{learning} 

\end{document}